\renewcommand{\mod}{\operatorname{mod}\nolimits}
\newcommand{\rad}{\operatorname{rad}\nolimits}
\newcommand{\id}{\operatorname{id}\nolimits}
\newcommand{\Mod}{\operatorname{Mod}\nolimits}
\newcommand{\rep}{\operatorname{rep}\nolimits}
\newcommand{\Rep}{\operatorname{Rep}\nolimits}
\newcommand{\op}{\operatorname{op}\nolimits}
\newcommand{\Hom}{\operatorname{Hom}\nolimits}
\renewcommand{\dim}{\operatorname{dim}\nolimits}
\newcommand{\rl}{\operatorname{rl}\nolimits}
\newcommand{\Char}{\operatorname{char}\nolimits}
\newcommand{\exend}{\begingroup \renewcommand{\qed}{\hfill \ensuremath{\triangle}} \qed \endgroup}
\newtheorem{theorem}{Theorem}[section]
\newtheorem{corollary}[theorem]{Corollary}
\newtheorem{lemma}[theorem]{Lemma}
\newtheorem{proposition}[theorem]{Proposition}
\theoremstyle{definition}
\newtheorem{example}[theorem]{Example}
\begin{document}


\title[Structure Theorems for Basic Algebras]{Structure Theorems for Basic Algebras}

\author{Carl Fredrik Berg}
\address{Carl Fredrik Berg \\ Institutt for matematiske fag, NTNU\\ 7491 Trondheim\\ Norway}
\curraddr{StatoilHydro R\&D Centre \\ Arkitekt Ebbells veg 10, Rotvoll\\ 7053 
Trondheim \\ Norway}
\email{carlpaatur@hotmail.com}


\subjclass[2000]{}



\begin{abstract}
A basic finite dimensional algebra over an algebraically closed field $k$ is isomorphic to a quotient of a tensor algebra by an admissible ideal. The category of left modules over the algebra is isomorphic to the category of representations of a finite quiver with relations. In this article we will remove the assumption that $k$ is algebraically closed to look at both perfect and non-perfect fields. We will introduce the notion of species with relations to describe the category of left modules over such algebras. If the field is not perfect, then the algebra is isomorphic to a quotient of a tensor algebra by an ideal that is no longer admissible in general. This gives hereditary algebras isomorphic to a quotient of a tensor algebra by a non-zero ideal. We will show that these non-zero ideals correspond to cyclic subgraphs of the graph associated to the species of the algebra. This will lead to the ideal being zero in the case when the underlying graph of the algebra is a tree.
\end{abstract}

\maketitle

\tableofcontents

It is well known that a basic finite dimensional algebra $\Lambda$
over an algebraically closed field $k$ is isomorphic to a quotient
of a path algebra $k \Gamma$ of a finite quiver $\Gamma$.  Moreover
the path algebra $k \Gamma$ is isomorphic to a tensor algebra
\cite[Theorem III.1.9]{ARS}. This was first outlined by Gabriel in
\cite{Ga2}, and he gave a concise proof in \cite[Section 4.3]{Ga3}.
From now on we will call this result \emph{Gabriel's structure
theorem for basic finite dimensional algebras over an algebraically
closed field}, or just the \emph{structure theorem}.

In this article we will discuss what happens if the field $k$ is not
algebraically closed. If one tries to follow the proof of Gabriel,
two assumptions on the algebra $\Lambda$ arise; the first is that
$\Lambda$ splits, i.e.\ the natural projection onto the quotient
algebra $\pi \colon \Lambda \to \Lambda / \rad \Lambda$ splits as a
$k$-algebra homomorphism.  Hence there exists an $\epsilon \colon
\Lambda / \rad \Lambda \to \Lambda$ such that $\pi \circ \epsilon
\simeq \id_{\Lambda / \rad \Lambda}$.  Via $\epsilon$ all
$\Lambda$-modules can be viewed as $\Lambda / \rad \Lambda$-modules.
The second assumption is that for any $\epsilon \colon \Lambda /
\rad \Lambda \to \Lambda$ such that $\pi \circ \epsilon \simeq
\id_{\Lambda / \rad \Lambda}$ the short exact sequence
$$0 \to (\rad \Lambda)^2 \to \rad \Lambda \to \rad \Lambda / (\rad
\Lambda)^2 \to 0$$ splits when it via $\epsilon$ is viewed as a
sequence of $\Lambda / \rad \Lambda - \Lambda / \rad
\Lambda$-bimodules. If both these assumptions are fulfilled, we
get a generalization of the structure theorem; $\Lambda$
is isomorphic to a quotient of a tensor algebra, and this tensor
algebra is constructed from a species associated to the algebra
$\Lambda$ \cite[Proposition 4.1.10]{Ben}.

The main topic of this article is investigating what happens if we
remove the second assumption above. We will show that $\Lambda$ is
still a quotient of a tensor algebra associated to a species,
however not the same species as was used before: To get a
morphism from a tensor algebra onto the algebra $\Lambda$ we have to
take the tensor algebra over a larger bimodule than the one used when both assumptions were fulfilled. Therefore the kernel
of this morphism is no longer an admissible ideal in the tensor
algebra, which gives some interesting observations in the case
$\Lambda$ is hereditary, e.g.\ the hereditary algebra $\Lambda$ need
no longer be a tensor algebra. Examples of such algebras are already
known, and we will use an example from \cite{DR3} to highlight this
property.

\vspace{0.5cm}

In the first section we will introduce notions used throughout this
article. Readers experienced with finite dimensional algebras will
likely be familiar with all the notions introduced.

The second section introduces species with relations.  Since the
existing literature does not treat this concept in detail, we will
give a fairly thorough discussion of it here.

In the third section we will give the structure theorem for finite
dimensional basic split algebras for which the sequence
$$0 \to (\rad \Lambda)^2 \to \rad \Lambda \to \rad \Lambda / (\rad
\Lambda)^2 \to 0$$ splits, using species with relations.  Most
results in this section are similar to well known results, but the
usage of species with relations is however not common.  We will also
show that finite dimensional basic algebras over perfect fields
satisfy the assumptions above.

The fourth section gives a structure theorem for finite
dimensional basic split algebras.  This structure theorem is a
generalization of Gabriel's structure theorem, however it is not a
generalization of the structure theorem given in section three.

In the last section we will describe hereditary basic finite dimensional split algebras. In contrast to the case for algebras over algebraically closed fields, the species of these hereditary algebras might have non-zero relations corresponding to subquivers for which the underlying graph contains cycles.


\section{Preliminaries}

This first section will be used to introduce notions we will need in
the rest of this article.

Throughout this section we will assume that $\Lambda$ is an
indecomposable finite dimensional algebra over a field $k$. Since
$\Lambda$ then is artinian, we know $\Lambda / \mathfrak{r}$ is
semisimple \cite[Theorem 4.14]{La1}, where $\mathfrak{r} = \rad
\Lambda$ is the Jacobson radical of the algebra $\Lambda$. Since the
algebra is finite dimensional, the radical $\mathfrak{r}$ is
nilpotent\index{nilpotent}, i.e.\ $\mathfrak{r}^n = (0)$ for a large
enough $n \in \mathbb{N}$.

When we view $\Lambda$ as a left module over itself, it can be
written as a direct sum of indecomposable projective left
$\Lambda$-modules ${_\Lambda \Lambda} = \oplus_{i \in I} P_i$. When
the $P_i$ are pairwise non-isomorphic projective $\Lambda$-modules
we say that $\Lambda$ is \emph{basic}\index{basic algebra}. A finite
dimensional algebra $\Lambda$ is always Morita equivalent to a basic
finite dimensional algebra. Thus, if we are interested in the module
category of an algebra, we can always reduce the question to a basic
algebra. If we assume that $\Lambda$ is basic, then $\Lambda /
\mathfrak{r} \simeq \oplus_{i \in I} P_i / \rad P_i \simeq \oplus_{i
\in I} D_i$, where $D_i$ are division rings \cite[Theorem
3.5.4]{DK}. Since $\Lambda$ was assumed to be a $k$-algebra, $k$
will act centrally on the division rings $D_i$, i.e.\ for all
$\lambda \in k$ and all $\lambda' \in D_i$ we have that
$\lambda\lambda' = \lambda'\lambda$. The direct sum
$\oplus_{i \in I} D_i$ contains $k$ as a subfield.

We say that a finite dimensional $k$-algebra $\Lambda$ is
\emph{elementary}\index{elementary algebra} if $\Lambda /
\mathfrak{r} \simeq \oplus_{i=1}^n k$, i.e.\ isomorphic as a
$k$-algebra to a finite direct sum of copies of $k$. If $k$ is
algebraically closed\index{algebraically closed}, i.e.\ $k$ has no
proper algebraic extension, then the only finite dimensional
division algebra over $k$ is $k$ itself, so basic implies elementary
when $k$ is algebraically closed and $\Lambda$ is finite
dimensional.

Let $R$ be a ring. An element $e \in R$ is called an
\emph{idempotent}\index{idempotent!idempotent element} if $e^2 = e$.
We call two idempotents $e$ and $f$
\emph{orthogonal}\index{idempotent!orthogonal idempotents} if $ef =
0 = fe$, and we call an idempotent $e$
\emph{primitive}\index{idempotent!idempotent primitive} if $e \not=
f + g$ where $f$ and $g$ are nonzero orthogonal idempotents. A set
of pairwise orthogonal primitive idempotents $\{ e_1, e_2, \dots e_n
\}$ in a ring $R$ will be called
\emph{complete}\index{idempotent!complete set of idempotents} if
$e_1 + e_2 + \cdots + e_n = 1_R$, where $1_R$ is the multiplicative
identity of $R$. Let $\Lambda$ be a finite dimensional algebra and
let $1_\Lambda$ be the multiplicative identity in $\Lambda$. Since
${_\Lambda \Lambda} = \oplus_{i \in I} P_i$, we will have $1_\Lambda
= \Sigma_{i \in I} e_i$, where $e_i \in P_i$.  It is easy to see
that the elements $\{ e_i \}_{i \in I}$ are pairwise orthogonal
idempotents, and it can be shown that they are primitive
\cite[Corollary 7.4]{AF}. Hence we have a complete set of
pairwise orthogonal idempotents $\{ e_i \}_{i \in I}$ in $\Lambda$,
and we have ${_\Lambda \Lambda} = \oplus_{i \in I} \Lambda e_i$
where $\Lambda e_i \simeq P_i$ \cite[Corollary 7.3]{AF}.

The quotient $\mathfrak{r} / \mathfrak{r}^2$ has a natural $\Lambda
/ \mathfrak{r}$-bimodule structure by letting $(\lambda +
\mathfrak{r}) (r + \mathfrak{r}^2) (\lambda' + \mathfrak{r}) =
\lambda r \lambda' + \mathfrak{r}^2$, where $\lambda, \lambda' \in
\Lambda$ and $r \in \mathfrak{r}$.  Obviously $\mathfrak{r} /
\mathfrak{r}^2 = (\Lambda / \mathfrak{r})( \mathfrak{r} /
\mathfrak{r}^2)( \Lambda / \mathfrak{r})$. By using the
decomposition $\Lambda / \mathfrak{r} \simeq \oplus_{i \in I} D_i$,
we get a decomposition $\mathfrak{r} / \mathfrak{r}^2 \simeq
\oplus_{i,j \in I} D_j \mathfrak{r} / \mathfrak{r}^2 D_i =
\oplus_{i,j \in I} ({_j M_i})$.\sloppy

Let $V$ be a $\Sigma-\Sigma$-bimodule, where $\Sigma$ is a ring. We
write $V^{(n)}$ for the $n$-fold tensor product $V \otimes_\Sigma V
\otimes_\Sigma \dots \otimes_\Sigma V$, and we let $V^{(0)} =
\Sigma$. The \emph{tensor ring}\index{tensor ring} of $\Sigma$ and
$V$ is defined as the graded ring $T(\Sigma, V) = V^{(0)} \oplus
V^{(1)} \oplus V^{(2)} \oplus \cdots$, where multiplication $V^{(n)}
\times V^{(m)} \to V^{(n+m)}$ is given using the tensor product over
$\Sigma$: For $\Sigma_{i \in I} a_{i,1} \otimes \cdots \otimes
a_{i,n} \in V^{(n)}$ and $\Sigma_{j \in J} b_{j,1} \otimes \cdots
\otimes b_{j,m} \in V^{(m)}$ we let $(\Sigma_{i \in I} a_{i,1}
\otimes \cdots \otimes a_{i,n}) (\Sigma_{j \in J} b_{j,1} \otimes
\cdots \otimes b_{j,m}) = \Sigma_{i \in I, j \in J} a_{i,1} \otimes
\cdots \otimes a_{i,n} \otimes b_{j,1} \otimes \cdots \otimes
b_{j,m}$.  If $\Sigma$ is a $k$-algebra and $k$ acts centrally on
$V$, then $k$ acts centrally on all $V^{(n)}$, and we can view
$T(\Sigma, V)$ as a $k$-algebra by letting $l(\Sigma_{i \in I}
a_{i,1} \otimes \cdots \otimes a_{i,n}) = \Sigma_{i \in I} l a_{i,1}
\otimes \cdots \otimes a_{i,n}$ for $l \in k$ and $\Sigma_{i \in I}
a_{i,1} \otimes \cdots \otimes a_{i,n} \in V^{(n)}$, where the
multiplication $l a_{i,1}$ is taken using the $k$-algebra structure
of $V$.  When we view the tensor ring $T(\Sigma, V)$ as a
$k$-algebra, we call it the \emph{tensor algebra}\index{tensor
algebra} of $\Sigma$ and $V$.

A morphism between two $k$-algebras is called a \emph{$k$-algebra
homomorphism}\index{algebra homomorphism}\index{$k$-algebra
homomorphism} if it is a ring homomorphism when the algebras are
viewed as rings, and at the same time a $k$-homomorphism when the
algebras are viewed as $k$-modules.

We say that a $k$-algebra $\Lambda$ \emph{splits} or that $\Lambda$
is a \emph{split algebra}\index{split algebra} if the natural
projection $\pi \colon \Lambda \to \Lambda / \mathfrak{r}$ splits in
the sense that there exists a $k$-algebra homomorphism $\epsilon
\colon \Lambda / \mathfrak{r} \to \Lambda$ such that $\pi \epsilon =
\id_{\Lambda / \mathfrak{r}}$. Observe that $\epsilon$ is not
unique. By the Wedderburn-Malcev theorem $\Lambda$ splits when $\sup
\{ n \mid H^n_R(\Lambda, M) \not= (0)$ for some $\Lambda$-bimodule
$M \} \leq 1$, where $H^n_R(\Lambda, M)$ is the $n$'th Hochschild
cohomology module of $\Lambda$ with coefficients in $M$ \cite[p.\
209]{Pi}.  This happens in particular when $k$ is a perfect field,
as we will see in Proposition \ref{prop:perfect_gives_p1p2}.

We end this section with an outline of the proof of Gabriel's
structure theorem, which says that an elementary (or equivalently
basic) finite dimensional algebra $\Lambda$ over an algebraically
closed field $k$ is isomorphic to a quotient of the path algebra $k
\Gamma$ of a finite quiver $\Gamma$ \cite[Theorem III.1.9]{ARS}.
Take the tensor algebra $T$ of the $k$-algebra $\Lambda /
\mathfrak{r}$ and the $\Lambda / \mathfrak{r} - \Lambda /
\mathfrak{r}$-bimodule $\mathfrak{r} / \mathfrak{r}^2$.  This is a
$k$-algebra, and there exists a $k$-algebra epimorphism $\tilde{f}
\colon T \to \Lambda$. The tensor algebra $T$ is isomorphic to the
path algebra $k\Gamma$ of a finite quiver $\Gamma$, so we get a
$k$-algebra epimorphism $k \Gamma \simeq T \to \Lambda$, which shows
that $\Lambda$ is isomorphic to a quotient of a path algebra.

\section{Species}

In this section we
want to introduce the notion of \emph{species with relations}.
Similar ideas have been used before (e.g.\ in \cite{As} under the name
\emph{bounden species}), but then only as an ideal in the algebra
corresponding to the species. In contrast, we want to introduce
relations for a species in a similar fashion as was done for
relations in the path algebra. Throughout this article we will work
with left modules, there are dual definitions and proofs in the
right module case.

A \emph{species}\index{species} (also
known as a \emph{modulated quiver}) $\mathcal{S} = (D_i, {_j
M_i})_{i,j \in I}$ is a set of division rings $D_i$ and $D_j -
D_i$-bimodules $_j M_i$ such that $\Hom_{D_i}({_j M_i}, D_i) \simeq
\Hom_{D_j}({_j M_i}, D_j)$ as $D_i-D_j$-bimodules. We say a species
$(D_i, {_j M_i})_{i,j \in I}$ is a
\emph{$k$-species}\index{$k$-species} if all the division rings
$D_i$ are finite dimensional over a common central subfield $k$, all
the bimodules $_j M_i$ are finite dimensional over $k$, and $\lambda
m = m \lambda$ for all $\lambda \in k$ and $m \in {_jM_i}$. So for a
$k$-species we have $\Hom_{D_i}({_j M_i}, D_i)
\simeq \Hom_{D_j}({_j M_i}, D_j)$ as $D_i-D_j$-bimodules, hence our definition of $k$-species is similar
to the definition of a $k$-species given in \cite{Ga2} and
\cite{Ri5}. All species we will work with in connection with finite
dimensional $k$-algebras are $k$-species.

To visualize a species $\mathcal{S} = (D_i, {_jM_i})_{i,j \in I}$ we
draw a quiver where we use the division rings $D_i$ as vertices, and
for each non-zero bimodule $_j M_i$ we draw an arrow starting in
$D_i$ and ending in $D_j$ and index the arrow using the bimodule $_j
M_i$. For example the species given by the division rings $D_1 =
\mathbb{R}$ and $D_2 = \mathbb{C}$, and the bimodules ${_2 M_1} =
\mathbb{C}$ and ${_1 M_2} = (0)$, will be drawn as
$$\xymatrix{ \mathbb{R} \ar[r]^{\mathbb{C}} & \mathbb{C}}$$ We will call the division rings $D_i$ the vertices of
$\mathcal{S}$, and when we view $D_i$ as a vertex we will sometimes
just call it $i$.

The \emph{underlying quiver}\index{species!underlying quiver}
$Q_\mathcal{S}$ of $\mathcal{S}$ is the quiver with vertices $i \in
I$ and arrows $i \to j$ for all ${_j M_i} \not= (0)$.  We say that a
species $\mathcal{S}$ is finite if the underlying quiver
$Q_\mathcal{S}$ if finite, and we say that $\mathcal{S}$ is without
oriented cycles if there are no oriented cycles $i \to i_1 \to
\cdots i_{n-1} \to i$ in $Q_\mathcal{S}$.

A \emph{representation}\index{species!representation of a species}
$V = (V_i, {_j\phi_i})$ over a species $\mathcal{S} = (D_i,
{_jM_i})_{i,j \in I}$ is a set of left $D_i$-modules $V_i$ together
with morphisms
$${_j\phi_i} \colon {_j M_i} \otimes_{D_i} V_i  \to V_j$$ where ${_j
M_i} \otimes_{D_i} V_i$ is viewed as a left $D_j$-module.
Composition of morphisms ${_k\phi_j} \circ {_j\phi_i} = {_k\phi'_i}
\colon ({_k M_j} \otimes_{D_j} {_j M_i}) \otimes_{D_i} V_i  \to V_k$
is given by ${_k\phi'_i} (({_k m_j} \otimes_{D_j} {_j m_i})
)\otimes_{D_i} v_i) = {_k\phi_j} ({_k m_j} \otimes_{D_j} ({_j\phi_i}
( {_j m_i} \otimes_{D_i} v_i)))$, where ${_k m_j} \in {_k M_j}, {_j
m_i} \in {_j M_i}$ and $v_i \in V_i$.

Let ${_j \mathcal{P}_i}$ be the set of all paths $p$ in
$Q_\mathcal{S}$ which start in the vertex $i$ and end in $j$, and
let $n_p$ be the length of the path $p$.  The vertices in the path
$p$ will be denoted $p(l)$ for $0 \leq l \leq n_p$ in such a way
that $p$ is the path $i = p(0) \to p(1) \to \cdots \to p(n_p-1) \to
p(n_p) = j$. We then have a $D_j - D_i$ bimodule
$${_j \mathcal{M}_i} = \bigoplus_{p \in {_j \mathcal{P}_i}} {_j
M_{p(n_p-1)}} \otimes_{D_{p(n_p-1)}} \cdots \otimes_{D_{p(1)}}
{_{p(1)} M_i}$$ Using the composing of morphisms described above,
from the morphisms $\{ _j \phi_i \}_{i,j \in I}$ we induce a unique
morphism ${_j f_i} \colon {_j \mathcal{M}_i} \otimes_{D_i} V_i \to
V_j$ for each pair $i, j \in I$.

The set of representations $V = (V_i, {_j\phi_i})$ over a species
$\mathcal{S} = (D_i, {_jM_i})_{i,j \in I}$ gives rise to an abelian
category $\Rep \mathcal{S}$\index{$\Rep$} in which a morphism
$\alpha \colon V = (V_i, {_j\phi_i}) \to (V'_i, {_j\phi'_i}) = V'$
is a set of $D_i$-linear maps $\alpha_i \colon V_i \to V'_i$ such
that the diagram
$$\xymatrix{ {_j M_i} \otimes_{D_i} V_i \ar[d]^{_j\phi_i}
\ar[r]^{1 \otimes \alpha_i} & {_j M_i} \otimes_{D_i} V'_i
\ar[d]^{_j\phi'_i}
\\ V_j \ar[r]^{\alpha_j} & V'_j}$$ commutes for all $i,j \in I$.
The full abelian subcategory of $\Rep \mathcal{S}$ consisting of all
representations $V$ for which all $V_i$ are finite dimensional as
vector spaces over $k$ will be denoted $\rep
\mathcal{S}$\index{$\rep$}.

For a species $\mathcal{S} = (D_i, {_jM_i})_{i,j \in I}$ we let the
\emph{tensor algebra} $T(\mathcal{S})$ of $\mathcal{S}$ be the
tensor ring $T(D, M) = T( \oplus_{i \in I} D_i, \oplus_{i,j \in I}
{_j M_i})$. Here we view $M = \oplus_{i,j \in I} {_j M_i}$ as a $D =
\oplus_{i \in I} D_i$ bimodule the natural way.  Since $D$ is a
$k$-algebra and $k$ acts centrally on $M$, we know that
$T(\mathcal{S})$ is a $k$-algebra. Let $J$ denote the ideal
$\oplus_{i \geq 1} M^{(i)}$ in $T(\mathcal{S})$.  Then
$T(\mathcal{S}) / J \simeq D = \oplus_{i \in I} D_i$ is semisimple.

The relation between representations over a species and the modules
over the corresponding tensor algebra is similar to the
correspondence between quiver representations and modules over the
corresponding tensor algebra, as shown in the following proposition.
Although this next proposition is known, we still include the
proof here since ideas from it will be used repeatedly in the rest
of this article.

\begin{proposition}\cite[Proposition 10.1]{DR1}\label{prop:species_Rep_is_tensor_Mod}
Let $\mathcal{S} = (D_i, {_jM_i})_{i,j \in I}$ be a finite
$k$-species. Then the category $\Rep \mathcal{S}$ and the category
$\Mod T(\mathcal{S})$ of left $T(\mathcal{S})$-modules are
equivalent.
\end{proposition}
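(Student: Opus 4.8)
The plan is to construct a pair of functors between $\Rep \mathcal{S}$ and $\Mod T(\mathcal{S})$ and show they are mutually quasi-inverse. First I would set $T = T(\mathcal{S})$ and $D = \oplus_{i \in I} D_i$, and recall the complete set of orthogonal idempotents $e_i \in D_i \subseteq T$ coming from the decomposition of $D$. For a left $T$-module $N$ I would define a representation $F(N) = (N_i, {_j\phi_i})$ by setting $N_i = e_i N$ (a left $D_i$-module, since $D_i = e_i D e_i \subseteq e_i T e_i$), and defining ${_j\phi_i} \colon {_j M_i} \otimes_{D_i} e_i N \to e_j N$ by $m \otimes n \mapsto mn$; here one checks that ${_j M_i} = e_j M e_i$ acts by multiplication and that the image indeed lands in $e_j N$, and that the map is well defined over $\otimes_{D_i}$ because $D_i$ acts on the right of ${_j M_i}$ exactly as it does by multiplication in $T$. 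Conversely, given a representation $V = (V_i, {_j\phi_i})$, I would set $G(V) = \bigoplus_{i \in I} V_i$ and equip it with a $T$-action. It suffices to specify how $D$ and $M$ act (since $T$ is generated by $D$ and $M$ as a $k$-algebra): $D_i$ acts on $V_i$ via the given module structure and annihilates $V_j$ for $j \neq i$, and for $m \in {_j M_i}$ one sets $m \cdot v = {_j\phi_i}(m \otimes v)$ for $v \in V_i$ and $m \cdot v = 0$ for $v \in V_\ell$ with $\ell \neq i$.

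The key step is verifying that the action of $M$ and $D$ on $G(V)$ extends consistently to an action of all of $T$, i.e.\ that it respects the defining relations of the tensor algebra. Concretely, $T$ is the quotient of the free $k$-algebra on generators (a $k$-basis of $D \oplus M$) by the relations expressing (a) the ring structure of $D$, (b) the $D$-bimodule structure of $M$, namely $d \cdot m = dm$ and $m \cdot d = md$ computed inside ${}_j M_i$, and (c) nothing else — the higher tensor powers $M^{(n)}$ are free of further relations because $T = T(D,M)$. So one must check: the $D$-action on $\bigoplus V_i$ is a genuine ring action (clear, as it is diagonal and each $V_i$ is a $D_i$-module); and the mixed relations $(dm)\cdot v = d\cdot(m\cdot v)$ and $(md)\cdot v = m\cdot(d\cdot v)$ hold. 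The first follows since ${_j\phi_i}$ is $D_j$-linear on the left; the second follows since ${_j\phi_i}$ is defined on ${_j M_i}\otimes_{D_i}V_i$, so it is balanced over $D_i$, which is exactly the statement $md\cdot v = m\cdot(dv)$. Since there are no relations among products of elements of $M$, no further compatibility is needed, and the action of a product $m_n\cdots m_1$ on $v$ is forced to be ${_j f_i}(m_n\otimes\cdots\otimes m_1\otimes v)$ in the notation of the composed morphisms introduced above. This is also where one sees $G$ is well defined on morphisms and that the two constructions are inverse: $F(G(V)) \cong V$ because $e_i\big(\bigoplus_\ell V_\ell\big) = V_i$ and the recovered structure maps are the original ${_j\phi_i}$; and $G(F(N)) \cong N$ via the obvious map $\bigoplus_i e_i N \to N$, which is bijective because $\{e_i\}$ is a complete set of orthogonal idempotents, hence $N = \bigoplus_i e_i N$, and it is $T$-linear by construction.

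Finally I would check functoriality and naturality. A morphism $\alpha \colon V \to V'$ in $\Rep \mathcal{S}$ is a family $\alpha_i \colon V_i \to V_i'$ making the squares in the definition commute; I would let $G(\alpha) = \bigoplus_i \alpha_i \colon \bigoplus_i V_i \to \bigoplus_i V_i'$, and the commuting squares say precisely that $G(\alpha)$ commutes with the action of $M$, while $D_i$-linearity of $\alpha_i$ says it commutes with the action of $D$; since $D$ and $M$ generate $T$, the map $G(\alpha)$ is $T$-linear. In the other direction, a $T$-linear $g \colon N \to N'$ restricts to $D_i$-linear maps $e_i N \to e_i N'$ (since $e_i$ is central-enough: $g(e_i n) = e_i g(n)$), and these commute with multiplication by elements of ${_j M_i}$, giving a morphism $F(g)$ in $\Rep \mathcal{S}$. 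The natural isomorphisms $F G \cong \id$ and $G F \cong \id$ constructed above are then easily seen to be natural in the obvious way. I do not expect any serious obstacle: the only point requiring care is being precise that $T(D,M)$ is generated in degrees $0$ and $1$ with \emph{no} relations beyond the bimodule axioms, so that prescribing the action of $D$ and $M$ compatibly with those axioms is necessary and sufficient to define a $T$-module; all remaining verifications are routine diagram chases using the balanced-over-$D_i$ property of tensor products and the completeness of $\{e_i\}_{i\in I}$.
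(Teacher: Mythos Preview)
Your proposal is correct and follows essentially the same approach as the paper: both construct mutually inverse functors by decomposing a $T(\mathcal{S})$-module via the idempotents (the paper writes $V_i = D_i V$, you write $N_i = e_i N$, which is the same), and both build the $T$-module structure on $\bigoplus_i V_i$ from the $D$- and $M$-actions. The only cosmetic differences are that your $F$ and $G$ are swapped relative to the paper's, and that where the paper extends the $M$-action to $M^{(n)}$ by an explicit inductive formula $\phi^{(n)} = \phi(1 \otimes \phi^{(n-1)})$, you argue via the universal property that $T(D,M)$ is generated in degrees $0$ and $1$ subject only to the bimodule relations; these are two phrasings of the same fact.
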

\begin{proof}
We want to define two functors $$F \colon \Rep \mathcal{S}
\rightleftarrows \Mod T(\mathcal{S}) \colon G$$ such that $G \circ F
\simeq \id_{\Rep \mathcal{S}}$ and $F \circ G \simeq \id_{\Mod
T(\mathcal{S})}$.

We start with $G$.  Let $V \in \Mod T(\mathcal{S})$.  Since $D =
\oplus_{i \in I} D_i$ is a subring of $T(\mathcal{S})$, we can view
$V$ as a left $D$-module.  Since $1_{T(\mathcal{S})} \in D$ we have
$D V = V$, therefore $V = (\oplus_{i \in I} D_i) V = \oplus_{i \in
I} (D_i V) = \oplus_{i \in I} V_i$ where $V_i = D_i V$, hence the
central idempotents in $D$ decompose $V$ as a left $D$-module.

View $M = \oplus_{i,j \in I} ({_j M_i})$ as a $D - D$-bimodule, then
${_j M_i} D_l = (0)$ for $l \not= i$. Since $V$ is a
$T(\mathcal{S})$-module and $M$ is a subset of $T(\mathcal{S})$, we
get a morphism $\phi' \colon M \times V \to V$ where $\phi'(m,v) =
mv$ by using the $T(\mathcal{S})$-module structure on $V$. This
morphism is $D$-biadditive, so it gives rise to an additive morphism
$\phi \colon M \otimes_D V \to V$ where $\phi(m \otimes v) = mv$. We
view $\phi$ as a left $D$-module morphism using that $M$ is a left
$D$-module. Since ${_j M_k} \otimes_{D} V_l = {_j M_k} \otimes_{D}
(D_l V) = (0)$ for all $k \not= l$, we see that $M \otimes_D V
\simeq \oplus_{i,j \in I} ({_j M_i} \otimes_{D_i} V_i)$. Observe
that $\phi ({_j M_i} \otimes_{D_i} V_i) = \phi (D({_j M_i}
\otimes_{D_i} V_i)) = \phi (D_j ({_j M_i} \otimes_{D_i} V_i)) = D_j
\phi ({_j M_i} \otimes_{D_i} V_i) \subseteq D_j V = V_j$.  Let ${_j
\phi_i} = \phi \mid_{{_j M_i} \otimes_{D_i} V_i} \colon {_j M_i}
\otimes_{D_i} V_i \to V_j$.  Define $G$ on objects by letting $G(V)
= (V_i, {_j \phi_i})_{i,j \in I}$.

Let $\alpha \colon V \to V' \in \Mod T(\mathcal{S})$. Since $\alpha$
is a morphism of left $T(\mathcal{S})$-modules, it is also a
morphism of left $D = \oplus_{i \in I} D_i$-modules, and then in
particular a left $D_i$-module morphism for every $i \in I$, hence
$\alpha(V_i) \subseteq V'_i$.  Let $\alpha_i = \alpha \mid_{V_i}
\colon V_i \to V'_i$, and let $G(\alpha) = \{ \alpha_i \}_{i \in
I}$. To see that $\{ \alpha_i \}_{i \in I}$ is a map of
$\mathcal{S}$ representations, we need to check that ${_j \phi'_i}
\circ (1 \otimes \alpha_i) = \alpha_j \circ {_j \phi_i}$ for all
$i,j \in I$. Using that $\alpha$ is a morphism of
$T(\mathcal{S})$-modules, we have \begin{align*}{_j \phi'_i} \circ
(1 \otimes \alpha_i)(m \otimes v) &= {_j \phi'_i} (m \otimes
\alpha_i(v)) = m \alpha_i(v) = m \alpha(v) \\ & = \alpha(mv)  =
\alpha_j(mv) = \alpha_j \circ {_j \phi_i} (m \otimes v) \end{align*}
for $m \in {_j M_i}$ and $v \in V_i$. This shows that $G \colon \Mod
T(\mathcal{S}) \to \Rep \mathcal{S}$ is a functor.

We then have to construct a functor $F \colon \Rep \mathcal{S} \to
\Mod T(\mathcal{S})$. For an object $(V_i, {_j \phi_i})_{i,j \in I}$
in $\Rep \mathcal{S}$, let $V = \oplus_{i \in I} V_i$. Let $D$
operate on $V$ the obvious way, namely using the left $D_i$
structure on $V_i$, and letting $D_j V_i = (0)$ for all $j \not= i$.
View ${_j \phi_i}$ as a morphism $M \otimes_D V \to V$ by letting
${_j \phi_i} \mid_{{_l M_k} \otimes_{D_k} V_k} = 0$ for $k \not= i$
and $l\not=j$ where $k, i, l, j \in I$. Then we can define $\phi =
\Sigma_{i,j \in I} {_j \phi_i} \colon M \otimes_D V \to V$. Let $M$
operate on $V$ using $\phi$, hence for $m \in M$ and $v \in V$, let
$m v = \phi (m \otimes v)$. By induction we define $M^{(n)} \times V
= M \otimes_D \cdots \otimes_D M \times V \to V$ using the morphism
$\phi^{(n)} \colon M^{(n)} \otimes_D V \to V$ where $\phi^{(n)} =
\phi(1 \otimes_D \phi^{(n-1)})$ and $\phi^{(1)} = \phi$. Let $F
((V_i, {_j \phi_i})_{i,j \in I}) = V$ where $V$ has this
$T(\mathcal{S})$-module structure.

Let $\{ \alpha_i \}_{i \in I} \colon (V_i, {_j \phi_i})_{i,j \in I}
\to (V'_i, {_j \phi'_i})_{i,j \in I}$ be a morphism in $\Rep
\mathcal{S}$. View $\alpha_i$ as a morphism on $V$ by letting
$\alpha_i \mid_{V_j} = 0$ for $j \not=i \in I$, and let $\alpha =
\Sigma_{i \in I} \alpha_i \colon V = \oplus_{i \in I} V_i \to
\oplus_{i \in I} V'_i = V'$. Since all $\alpha_i$ are left
$D_i$-linear, we only need to show that $\alpha(mv) = m\alpha(v)$
for $m \in \oplus_{i \geq 1} M^{(i)}$. Since multiplication by an
element in $\oplus_{i \geq 1} M^{(i)}$ is induced by the
multiplication of elements in $M$, this is true if $\alpha(mv) =
m\alpha(v)$ for $m \in M$.  Invoking that $\alpha$ is $D$-linear and
$M = \oplus_{i,j \in I} {_j M_i}$ as a $D-D$-bimodule, what one
needs to show is that $\alpha(mv) = m\alpha(v)$ for $m \in {_j
M_i}$.  Using that $\{ \alpha_i \}_{i \in I}$ is a morphism in $\Rep
\mathcal{S}$ we see that $\alpha(mv) = \alpha_j ({_j \phi_i}(m
\otimes v)) = {_j \phi'_i} \circ (1 \otimes \alpha_i)(m \otimes v) =
{_j \phi'_i} (m \otimes \alpha_i(v)) = m\alpha(v)$. This shows that
$F \colon \Rep \mathcal{S} \to \Mod T(\mathcal{S})$ is a functor.

Observing that $G \circ F \simeq \id_{\Rep \mathcal{S}}$ and $F
\circ G \simeq \id_{\Mod T(\mathcal{S})}$, we have proven the
proposition.
\end{proof}

\begin{corollary}\label{cor:species_rep_is_tensor_mod}
Let $\mathcal{S} = (D_i, {_jM_i})_{i,j \in I}$ be a finite
$k$-species. Then the category $\rep \mathcal{S}$ and the category
$\mod T(\mathcal{S})$ of finite dimensional left
$T(\mathcal{S})$-modules are equivalent.
\end{corollary}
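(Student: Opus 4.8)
The plan is to deduce the corollary directly from Proposition~\ref{prop:species_Rep_is_tensor_Mod} by checking that the quasi-inverse equivalences $F$ and $G$ constructed there restrict to the finite-dimensional subcategories $\rep \mathcal{S}$ and $\mod T(\mathcal{S})$. Concretely, I would first recall that the equivalence $(F,G)$ is built so that, for a representation $V = (V_i, {_j\phi_i})$, the underlying $k$-vector space of $F(V)$ is $\oplus_{i \in I} V_i$, and conversely for a $T(\mathcal{S})$-module $W$ the spaces $V_i = D_i W$ satisfy $\oplus_{i \in I} V_i = W$ as $k$-vector spaces. Since $I$ is finite (the species is finite), a finite direct sum of $k$-vector spaces is finite dimensional over $k$ precisely when each summand is; this shows $F(V) \in \mod T(\mathcal{S})$ whenever $V \in \rep \mathcal{S}$, and $G(W) \in \rep \mathcal{S}$ whenever $W \in \mod T(\mathcal{S})$.

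Next I would observe that $\rep \mathcal{S}$ and $\mod T(\mathcal{S})$ are by definition \emph{full} subcategories of $\Rep \mathcal{S}$ and $\Mod T(\mathcal{S})$ respectively, so no morphisms are lost on restriction and the restricted functors $F|_{\rep \mathcal{S}}$ and $G|_{\mod T(\mathcal{S})}$ are again well defined on morphisms with no further work. The natural isomorphisms $G \circ F \simeq \id_{\Rep \mathcal{S}}$ and $F \circ G \simeq \id_{\Mod T(\mathcal{S})}$ then restrict to natural isomorphisms between the restricted composites and the identity functors on $\rep \mathcal{S}$ and $\mod T(\mathcal{S})$, since the components of those natural transformations at finite-dimensional objects are themselves morphisms in the full subcategories. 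Hence $\rep \mathcal{S}$ and $\mod T(\mathcal{S})$ are equivalent.

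The only point requiring a moment of care — and the closest thing to an obstacle here — is the verification that $F$ and $G$ genuinely preserve finite dimensionality, which comes down to the fact that $T(\mathcal{S})$ itself need not be finite dimensional over $k$ even when $\mathcal{S}$ is finite (if $Q_\mathcal{S}$ has oriented cycles the tensor algebra is infinite dimensional), so one cannot argue that \emph{every} $T(\mathcal{S})$-module relevant here is automatically finite dimensional; instead one must use the explicit decomposition $W = \oplus_{i \in I} D_i W$ with $I$ finite, exactly as in the proof of the proposition. Beyond that, the corollary is a formal consequence of restricting an equivalence to full subcategories that are stable under both functors, so the proof is essentially a short remark citing the construction in Proposition~\ref{prop:species_Rep_is_tensor_Mod}.
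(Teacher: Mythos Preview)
Your proposal is correct and follows essentially the same approach as the paper: you restrict the equivalences $F$ and $G$ from Proposition~\ref{prop:species_Rep_is_tensor_Mod} and verify, using the finiteness of $I$ and the decomposition $W = \oplus_{i \in I} D_i W$, that each functor carries finite-dimensional objects to finite-dimensional objects. The paper's proof is terser and omits your explicit remarks about fullness and the restriction of the natural isomorphisms, but the mathematical content is the same.
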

\begin{proof}
From Proposition \ref{prop:species_Rep_is_tensor_Mod} we have two functors $$F \colon
\Rep \mathcal{S} \rightleftarrows \Mod T(\mathcal{S}) \colon G$$
such that $G \circ F \simeq \id_{\Rep \mathcal{S}}$ and $F \circ G
\simeq \id_{\Mod T(\mathcal{S})}$.  We want to show that $F
\mid_{\rep \mathcal{S}} \subset \mod T(\mathcal{S})$ and $G
\mid_{\mod T(\mathcal{S})} \subset \rep \mathcal{S}$.

Therefore let $(V_i, {_j \phi_i}) \in \rep \mathcal{S}$.  Then $\dim_k V_i < \infty$, and since
$\mathcal{S}$ is finite we get that $V= \oplus_{i \in I} V_i$ is
finite dimensional over $k$ too.  Hence $F((V_i, {_j \phi_i})) = V \in \mod T(\mathcal{S})$.

Now let $V \in \mod T(\mathcal{S})$.  Then $\dim_k V < \infty$, therefore $V_i = D_iV$ is finite dimensional over $k$ too,
so $G(V) = (V_i, {_j \phi_i}) \in \rep \mathcal{S}$.
\end{proof}

A \emph{relation}\index{species!relation}\index{relation} $\sigma$
of a species $\mathcal{S} = (D_i, {_j M_i})_{i,j \in I}$ is a sum
$\sigma = g_1 + \cdots + g_n$ of elements $g_l = g_{{l, n_l}}
\otimes \cdots \otimes g_{l,1} \in {_{i_{(n_l,l)}} M_{i_{(n_l
-1,l)}}} \otimes_{D_{i_{(n_l-1,l)}}} \cdots \otimes_{D_{i_{(l,1)}}}
{_{i_{(l,1)}} M_{i_{(l,0)}}}$ where $i_{(l,n_l)}=b$ and
$i_{(l,0)}=a$ for all $1 \leq l \leq n$.  We will write the relation
$\sigma$ as $_b \sigma_a$ when we want to emphasize that it starts
in $a$ and ends in $b$. Let $\rho = \{ \sigma_t \}_{t \in T}$ be a
set of relations, where the different elements $\sigma_t$ possibly
start and end in different vertices. We call the pair $(\mathcal{S},
\rho)$ a \emph{species with relations}\index{relation!species}.
Define $T(\mathcal{S}, \rho) = T(\mathcal{S}) / \langle \rho
\rangle$ where $\langle \rho \rangle$ is the ideal in
$T(\mathcal{S})$ generated by the elements $\{ \sigma_t \}_{t \in
T}$.  Also, define $\Rep (\mathcal{S}, \rho)$ as the category of
representations $V \in \Rep \mathcal{S}$ for which ${_j f_i}
\mid_{\langle {_j \sigma_i} \rangle \otimes_{D_i} V_i} = 0$ whenever
there is an element ${_j \sigma_i} \in \rho$, where $\langle {_j
\sigma_i} \rangle$ is the subspace of $_j \mathcal{M}_i$ generated
by ${_j \sigma_i}$ as a $D_j - D_i$-bimodule. Let $\rep
(\mathcal{S}, \rho) = \Rep (\mathcal{S}, \rho) \cap \rep
\mathcal{S}$.

The next proposition is a generalization of \cite[Proposition
1.7]{ARS}.

\begin{proposition}\label{prop:species_with_rel_is_tensor_with_ideal}
Let $\mathcal{S}$ be a finite $k$-species, and $\rho$ a set of
relations. Then the category $\Rep (\mathcal{S}, \rho)$ and the
category $\Mod (T(\mathcal{S})/\langle \rho \rangle)$ of left
$T(\mathcal{S})/\langle \rho \rangle$-modules are equivalent.
\end{proposition}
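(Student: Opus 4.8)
The plan is to build on Proposition~\ref{prop:species_Rep_is_tensor_Mod} by checking that the equivalence $F \colon \Rep\mathcal{S} \rightleftarrows \Mod T(\mathcal{S}) \colon G$ restricts to an equivalence between the subcategories $\Rep(\mathcal{S},\rho)$ and $\Mod(T(\mathcal{S})/\langle\rho\rangle)$. Recall that $\Mod(T(\mathcal{S})/\langle\rho\rangle)$ is naturally a full subcategory of $\Mod T(\mathcal{S})$, namely those modules $V$ annihilated by the ideal $\langle\rho\rangle$. So the whole proposition reduces to the identity
$$ F^{-1}\bigl(\Mod(T(\mathcal{S})/\langle\rho\rangle)\bigr) = \Rep(\mathcal{S},\rho), $$
i.e.\ for a representation $W = (V_i, {_j\phi_i})$ with associated module $V = F(W)$, one has $\langle\rho\rangle V = 0$ if and only if ${_jf_i}\mid_{\langle {_j\sigma_i}\rangle \otimes_{D_i} V_i} = 0$ for every ${_j\sigma_i} \in \rho$.

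The key step is to identify the action of the composite morphisms ${_jf_i}$ from the definition of $\Rep(\mathcal{S},\rho)$ with multiplication in $T(\mathcal{S})$ on $V = F(W)$. Concretely, I would unwind the construction of $F$: the $T(\mathcal{S})$-module structure on $V$ is given inductively by $\phi^{(n)} = \phi(1\otimes_D \phi^{(n-1)})$, which is exactly the "composition of morphisms" described before Proposition~\ref{prop:species_Rep_is_tensor_Mod} that induces ${_jf_i}\colon {_j\mathcal{M}_i}\otimes_{D_i} V_i \to V_j$. So I would prove by induction on path length that for a path $p\colon i \to \cdots \to j$ and an element $m = {_jm_{p(n_p-1)}}\otimes\cdots\otimes{_{p(1)}m_i}$ in the corresponding summand of ${_j\mathcal{M}_i}$, the scalar multiplication $m\cdot v$ (viewing $m$ as an element of $M^{(n_p)} \subseteq T(\mathcal{S})$) equals ${_jf_i}(m\otimes v)$ for $v \in V_i$, and equals $0$ on $V_l$ for $l\neq i$; this is essentially the content of the composition formula, already used in the proof above. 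Granting this, an element $\sigma = {_j\sigma_a} = g_1 + \cdots + g_n \in \rho$ acts on $V$ by $\sigma\cdot v = {_jf_a}(\sigma\otimes v)$ for $v \in V_a$ and by $0$ elsewhere; since $D_j\cdot{_jf_a}(\sigma\otimes v) \subseteq V_j$ and $D_a$ acts $D_a$-linearly on the right, the two-sided ideal $\langle\sigma\rangle$ annihilates $V$ precisely when ${_jf_a}$ kills the sub-bimodule $\langle {_j\sigma_a}\rangle$ tensored with $V_a$ — matching the defining condition of $\Rep(\mathcal{S},\rho)$. Running over all $\sigma_t\in\rho$ gives $\langle\rho\rangle V = 0 \Leftrightarrow W \in \Rep(\mathcal{S},\rho)$.

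It then remains to check the reverse direction for $G$: if $V \in \Mod(T(\mathcal{S})/\langle\rho\rangle)$, then $G(V) = (V_i,{_j\phi_i}) \in \Rep(\mathcal{S},\rho)$. This follows from the same identification, since in $G(V)$ the maps ${_j\phi_i}$ are just restrictions of the $T(\mathcal{S})$-multiplication, so the induced ${_jf_i}$ are restrictions of multiplication by elements of $M^{(n)}$, which vanish on $\langle\rho\rangle\cdot V = 0$. Finally, since $F$ and $G$ restrict to functors between these full subcategories and remain quasi-inverse there (the natural isomorphisms $G\circ F \simeq \id$ and $F\circ G\simeq \id$ restrict automatically), we get the claimed equivalence $\Rep(\mathcal{S},\rho) \simeq \Mod(T(\mathcal{S})/\langle\rho\rangle)$.

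The main obstacle I anticipate is the bookkeeping in the inductive identification of ${_jf_i}$ with iterated multiplication in the tensor algebra: one must be careful that the various "zero extension" conventions (${_j\phi_i}\mid_{{_lM_k}\otimes V_k} = 0$ for $(k,l)\neq(i,j)$, and $\alpha_i\mid_{V_j}=0$) interact correctly with the grading of $T(\mathcal{S}) = \bigoplus M^{(n)}$, so that a single homogeneous tensor in $M^{(n)}$ acts through exactly the one path-summand of ${_j\mathcal{M}_i}$ it belongs to and through nothing else. Once that identification is nailed down, the equivalence of the defining conditions for the two subcategories is essentially formal, and nothing beyond Proposition~\ref{prop:species_Rep_is_tensor_Mod} is needed.
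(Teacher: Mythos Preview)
Your proposal is correct and follows essentially the same approach as the paper: both proofs restrict the equivalence $F,G$ of Proposition~\ref{prop:species_Rep_is_tensor_Mod} to the subcategories by checking that $\langle\rho\rangle$ annihilates $F(W)$ if and only if each ${_jf_i}$ vanishes on $\langle{_j\sigma_i}\rangle\otimes_{D_i}V_i$. The paper's argument is terser and leaves the identification of ${_jf_i}$ with iterated $T(\mathcal{S})$-multiplication implicit, whereas you spell out the inductive bookkeeping more carefully; but the structure and the key idea are the same.
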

\begin{proof}
Recall from Proposition \ref{prop:species_Rep_is_tensor_Mod} that we
have mutually inverse equivalences
$$F \colon \Rep \mathcal{S} \rightleftarrows \Mod T(\mathcal{S})
\colon G$$ We want to show that these functors induce an equivalence
between $\Rep (\mathcal{S}, \rho)$ and $\Mod (T(\mathcal{S})/\langle
\rho \rangle)$.

\sloppy Let $(V_i, {_j \phi_i})_{i,j \in I} \in \Rep (\mathcal{S},
\rho)$. Since ${_j f_i} \mid_{\langle {_j \sigma_i} \rangle
\otimes_{D_i} V_i} = 0$ for every element ${_j \sigma_i} \in \rho$,
we have $\langle {_j \sigma_i} \rangle F((V_i, {_j \phi_i})_{i,j \in
I}) = (0)$, where in the last equation $\langle {_j \sigma_i}
\rangle$ is the ideal in $T(\mathcal{S})$ generated by ${_j
\sigma_i}$. Therefore $\langle \rho \rangle F((V_i, {_j
\phi_i})_{i,j \in I}) = (0)$, so $F((V_i, {_j \phi_i})_{i,j \in I})
\in \Mod (T(\mathcal{S})/\langle \rho \rangle)$.

On the other hand, let $V \in \Mod (T(\mathcal{S})/\langle \rho
\rangle)$.  For ${_j \sigma_i} \in \rho$, we have that $(0) =
\langle {_j \sigma_i} \rangle V = \langle {_j \sigma_i} \rangle
V_i$, which implies that ${_j f_i} \mid_{\langle {_j \sigma_i}
\rangle \otimes_{D_i} V_i} = 0$ for the morphism ${_j f_i} \colon
{_j \mathcal{M}_i} \otimes V_i \to V_j$ in $G(V)$, where $\langle
{_j \sigma_i} \rangle$ in the last equation is the subset of ${_j
\mathcal{M}_i}$ generated by ${_j \sigma_i}$ as a
$D_j-D_i$-bimodule. Hence $G(V) \in \Rep (\mathcal{S}, \rho)$.
\end{proof}

\begin{corollary}\label{cor:species_with_rel_is_tensor_with_ideal}
Let $\mathcal{S}$ be a finite $k$-species, and $\rho$ a set of
relations. Then the category $\rep (\mathcal{S}, \rho)$ and the
category $\mod (T(\mathcal{S})/\langle \rho \rangle)$ of finite
dimensional left $T(\mathcal{S})/\langle \rho \rangle$-modules are
equivalent.
\end{corollary}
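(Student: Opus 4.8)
The plan is to combine the equivalence of Proposition \ref{prop:species_with_rel_is_tensor_with_ideal} with the finite-dimensionality bookkeeping already carried out in Corollary \ref{cor:species_rep_is_tensor_mod}. Recall that Proposition \ref{prop:species_with_rel_is_tensor_with_ideal} shows that the mutually inverse functors $F, G$ of Proposition \ref{prop:species_Rep_is_tensor_Mod} restrict to mutually inverse equivalences between $\Rep(\mathcal{S},\rho)$ and $\Mod(T(\mathcal{S})/\langle\rho\rangle)$. It therefore suffices to check that these restricted functors further restrict to the full subcategories of finite-dimensional objects on each side, i.e.\ that $F$ sends $\rep(\mathcal{S},\rho) = \Rep(\mathcal{S},\rho) \cap \rep\mathcal{S}$ into $\mod(T(\mathcal{S})/\langle\rho\rangle)$ and that $G$ sends $\mod(T(\mathcal{S})/\langle\rho\rangle)$ into $\rep(\mathcal{S},\rho)$. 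The natural isomorphisms $G \circ F \simeq \id$ and $F \circ G \simeq \id$ of Proposition \ref{prop:species_Rep_is_tensor_Mod} then restrict automatically.

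For the first inclusion I would note that a $T(\mathcal{S})/\langle\rho\rangle$-module is finite dimensional over $k$ if and only if it is so when viewed as a $T(\mathcal{S})$-module by restriction along the projection $T(\mathcal{S}) \to T(\mathcal{S})/\langle\rho\rangle$, since the two $k$-module structures coincide. Thus, given $(V_i,{_j\phi_i}) \in \rep(\mathcal{S},\rho)$, Corollary \ref{cor:species_rep_is_tensor_mod} already gives that $F((V_i,{_j\phi_i})) = \oplus_{i\in I} V_i$ is finite dimensional over $k$ (here finiteness of $\mathcal{S}$, i.e.\ $I$ finite, together with $\dim_k V_i < \infty$, is used), while Proposition \ref{prop:species_with_rel_is_tensor_with_ideal} places it in $\Mod(T(\mathcal{S})/\langle\rho\rangle)$; hence $F((V_i,{_j\phi_i})) \in \mod(T(\mathcal{S})/\langle\rho\rangle)$. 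For the second inclusion, given $V \in \mod(T(\mathcal{S})/\langle\rho\rangle)$, view it as a finite-dimensional $T(\mathcal{S})$-module; then by Corollary \ref{cor:species_rep_is_tensor_mod} the representation $G(V) = (V_i,{_j\phi_i})$ has each $V_i = D_i V$ finite dimensional over $k$, so $G(V) \in \rep\mathcal{S}$, and by Proposition \ref{prop:species_with_rel_is_tensor_with_ideal} $G(V) \in \Rep(\mathcal{S},\rho)$; therefore $G(V) \in \rep(\mathcal{S},\rho)$.

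I do not expect a genuine obstacle here: the corollary is a formal consequence of the two results quoted, and the only point requiring any care is that ``finite dimensional over $k$'' is the uniform notion of size used throughout — which is harmless because $\mathcal{S}$ is a $k$-species (all $D_i$ and all ${_j M_i}$ are finite dimensional over $k$) and $I$ is finite, so tensor products over the $D_i$ and finite direct sums indexed by $I$ preserve finite $k$-dimension. This is exactly the observation already made in the proof of Corollary \ref{cor:species_rep_is_tensor_mod}, so the proof amounts to invoking that corollary alongside Proposition \ref{prop:species_with_rel_is_tensor_with_ideal}.
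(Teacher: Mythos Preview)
Your proposal is correct and follows exactly the same approach as the paper: the paper's proof simply states that the result follows from Corollary~\ref{cor:species_rep_is_tensor_mod} and Proposition~\ref{prop:species_with_rel_is_tensor_with_ideal}, and your argument is precisely the unpacking of that one-line deduction.
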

\begin{proof}
This follows from Corollary \ref{cor:species_rep_is_tensor_mod} and
Proposition \ref{prop:species_with_rel_is_tensor_with_ideal}.
\end{proof}

Let $\Lambda = (T(\mathcal{S})/\langle \rho \rangle)$ where
$\mathcal{S} = (D_i, {_j M_i})_{i,j \in I}$.  In the category of
left $\Lambda$-modules, the projective modules are $P_i = \Lambda
D_i$ for $i \in I$, where $\Lambda / \mathfrak{r} \simeq \oplus_{i
\in I} D_i$.  Observe that there is a one-to-one correspondence
between the vertices $I$ and the indecomposable projective
representations of $(\mathcal{S}, \rho)$.

\section{Basic Algebras over Perfect Fields}\label{section:perfect_field}

In this section, let $\Lambda$ be a finite dimensional basic algebra
over a field $k$ (not necessarily algebraically closed).  We want to
investigate algebras $\Lambda$ that are split, i.e.\ the natural
projection onto the quotient algebra $\pi \colon \Lambda \to \Lambda
/ \mathfrak{r}$ splits in the sense that there exists a $k$-algebra
homomorphism $\epsilon \colon \Lambda / \mathfrak{r} \to \Lambda$
such that $\pi \epsilon = \id_{\Lambda / \mathfrak{r}}$.

Assume an algebra $\Lambda$ is split.  Recall that the $k$-algebra
homomorphism $\epsilon \colon \Lambda / \mathfrak{r} \to \Lambda$
such that $\pi \epsilon = \id_{\Lambda / \mathfrak{r}}$ is not
unique. Using $\epsilon \colon \Lambda/\mathfrak{r} \to \Lambda$ we
can view $\Lambda / \mathfrak{r} \simeq \oplus_{i \in I} D_i$ as a
subalgebra of $\Lambda$, and we can identify the division rings
$D_i$ with their image in $\Lambda$ under $\epsilon$. Observe that
the subalgebra structure of $\Lambda / \mathfrak{r}$ and the
identification of $D_i$ with a subset of $\Lambda$ is dependent on
the choice of $\epsilon$.

We say that a split algebra $\Lambda$ is \emph{$\mathfrak{r}$-split}
if for any $k$-algebra homomorphism $\epsilon \colon \Lambda /
\mathfrak{r} \to \Lambda$ such that $\pi \epsilon = \id_{\Lambda /
\mathfrak{r}}$, the short exact sequence
$$0 \to \mathfrak{r}^2 \to \mathfrak{r} \to \mathfrak{r} /
\mathfrak{r}^2 \to 0$$ splits when we via $\epsilon$ view the
sequence as a sequence of $\Lambda / \mathfrak{r} - \Lambda /
\mathfrak{r}$-bimodules.

In this section we will give a structure theorem for finite
dimensional $\mathfrak{r}$-split basic algebras.  At the end of this
section we will show that all finite dimensional basic $k$-algebras
over perfect fields $k$ are $\mathfrak{r}$-split, hence they give
rise to a large class of examples of $\mathfrak{r}$-split algebras.

To reach this goal we need a slight reformulation of \cite[Lemma
III.1.2]{ARS}:

\begin{lemma}\label{lemma:old_tensor_map}
Let $\Sigma$ be a $k$-algebra and $V$ a $\Sigma - \Sigma$-bimodule.
Let $\Lambda$ be a $k$-algebra and $f \colon \Sigma \oplus V \to
\Lambda$ a morphism such that $f \mid_{\Sigma} \colon \Sigma \to
\Lambda$ is a $k$-algebra homomorphism and $f \mid_V \colon V \to
\Lambda$ is a $\Sigma - \Sigma$-bimodule morphism when $\Lambda$ is
viewed as a $\Sigma - \Sigma$-bimodule via $f \mid_{\Sigma}$. Then
there exists a unique $k$-algebra homomorphism $\tilde{f} \colon
T(\Sigma, V) \to \Lambda$ such that $\tilde{f} \mid_{\Sigma \oplus
V} = f$.
\end{lemma}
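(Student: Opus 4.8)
The plan is to use the universal property of the tensor algebra directly: construct $\tilde f$ on each graded piece $V^{(n)}$ by the only possible formula forced by multiplicativity, check well-definedness, then verify it is a ring homomorphism and that uniqueness follows automatically.

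First I would define $\tilde f$ on $V^{(0)} = \Sigma$ to be $f\mid_\Sigma$, on $V^{(1)} = V$ to be $f\mid_V$, and on $V^{(n)}$ for $n \geq 2$ by the rule
$$\tilde f(v_1 \otimes v_2 \otimes \cdots \otimes v_n) = f(v_1) f(v_2) \cdots f(v_n),$$
extended additively, with the product taken in $\Lambda$. The key point to check is that this is well-defined on the tensor product over $\Sigma$: the map $V \times \cdots \times V \to \Lambda$ sending $(v_1, \dots, v_n) \mapsto f(v_1)\cdots f(v_n)$ is $\Sigma$-balanced in each adjacent pair, precisely because $f\mid_V$ is a $\Sigma$–$\Sigma$-bimodule morphism: $f(v_i \sigma) f(v_{i+1}) = f(v_i) f(\sigma) f(v_{i+1}) = f(v_i) f(\sigma v_{i+1})$, using that $f\mid_\Sigma$ is an algebra homomorphism so that $f(v_i)f(\sigma) = f(v_i \sigma)$ (from the right-module property) and similarly on the left. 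Hence the universal property of the iterated tensor product over $\Sigma$ gives a well-defined $k$-linear map on each $V^{(n)}$, and these assemble to a $k$-linear map $\tilde f \colon T(\Sigma, V) \to \Lambda$. I would also note that $\tilde f\mid_{\Sigma \oplus V} = f$ is immediate, and that $\tilde f(1_{T(\Sigma,V)}) = f(1_\Sigma) = 1_\Lambda$ since $f\mid_\Sigma$ is a $k$-algebra homomorphism.

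Next I would verify that $\tilde f$ respects multiplication. Since $T(\Sigma, V)$ is generated as a $k$-algebra by $\Sigma$ and $V$ and $\tilde f$ is $k$-linear, it suffices to check $\tilde f(xy) = \tilde f(x)\tilde f(y)$ on homogeneous elements, i.e.\ for $x = v_1 \otimes \cdots \otimes v_n \in V^{(n)}$ and $y = w_1 \otimes \cdots \otimes w_m \in V^{(m)}$, where the product $xy = v_1 \otimes \cdots \otimes v_n \otimes w_1 \otimes \cdots \otimes w_m$ by the definition of multiplication in the tensor algebra. Then $\tilde f(xy) = f(v_1)\cdots f(v_n) f(w_1)\cdots f(w_m) = \tilde f(x)\tilde f(y)$ by associativity in $\Lambda$. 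The mixed cases where one factor lies in $\Sigma = V^{(0)}$ — so that multiplication uses the $\Sigma$-action on $V^{(n)}$ rather than concatenation — need the bimodule compatibility again: for $\sigma \in \Sigma$, $\tilde f(\sigma \cdot (v_1 \otimes \cdots \otimes v_n)) = \tilde f((\sigma v_1) \otimes v_2 \otimes \cdots \otimes v_n) = f(\sigma v_1) f(v_2) \cdots = f(\sigma) f(v_1) \cdots = \tilde f(\sigma) \tilde f(v_1 \otimes \cdots \otimes v_n)$, and symmetrically on the right.

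For uniqueness, suppose $g \colon T(\Sigma, V) \to \Lambda$ is any $k$-algebra homomorphism with $g\mid_{\Sigma \oplus V} = f$. Since $\Sigma \cup V$ generates $T(\Sigma,V)$ as a $k$-algebra, $g$ is determined on all of $T(\Sigma,V)$ by its values on $\Sigma$ and $V$, so $g = \tilde f$. I do not expect a serious obstacle here; the only mildly delicate point — and the one I would write out most carefully — is the well-definedness of $\tilde f$ on the tensor products over $\Sigma$ (not merely over $k$), which is exactly where the hypothesis that $f\mid_V$ is a $\Sigma$–$\Sigma$-bimodule morphism (compatibly with $f\mid_\Sigma$) gets used. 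Everything else is a routine check that the natural formula is forced and consistent. Since this is a restatement of \cite[Lemma III.1.2]{ARS}, one could alternatively just cite that result and indicate the minor change (allowing $\Sigma$ to be an arbitrary $k$-algebra rather than a product of copies of $k$), but the direct argument above is short enough to include in full.
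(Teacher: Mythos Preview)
Your proposal is correct and follows essentially the same approach as the paper: define $\tilde f$ on each $V^{(n)}$ by $v_1\otimes\cdots\otimes v_n\mapsto f(v_1)\cdots f(v_n)$, verify well-definedness over $\Sigma$ via the bimodule property of $f\mid_V$, check multiplicativity on homogeneous elements, and deduce uniqueness from the fact that $\Sigma\cup V$ generates $T(\Sigma,V)$. The only differences are expository---the paper phrases the well-definedness check as $\Sigma$-biadditivity and writes out the additivity/multiplicativity verification with explicit sums, while you treat the degree-zero mixed case more explicitly---but the underlying argument is identical.
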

\begin{proof}
Let $\phi \colon V \times V \to \Lambda$ be given by $\phi(v_1, v_2)
= f(v_1)f(v_2)$.  Then for $r \in \Sigma$ we have $\phi(v_1 r, v_2)
= f(v_1 r)f(v_2) = f(v_1) r f(v_2) = f(v_1) f(r v_2) = \phi(v_1, r
v_2)$ since $f \mid_V$ is a $\Sigma - \Sigma$-bimodule morphism.
Also since $f \mid_V$ is a $\Sigma - \Sigma$-bimodule morphism we
get that $\phi(v_1 + v_2, v_3) = \phi(v_1, v_3) + \phi(v_2, v_3)$
and $\phi(v_1, v_2 + v_3) = \phi(v_1, v_2) + \phi(v_1, v_3)$, so
$\phi$ is a $\Sigma$-biadditive morphism. Hence we get an induced
additive morphism $f_2 \colon V \otimes_\Sigma V \to \Lambda$ where
$f_2 (v_1 \otimes v_2) = f(v_1)f(v_2)$.  Using again that $f \mid_V$
is a $\Sigma - \Sigma$-bimodule morphism, we see that for $r \in
\Sigma$ we have $f_2 (r v_1 \otimes v_2) = f(r v_1) f(v_2) = r
f(v_1) f(v_2) = r f_2 (v_1 \otimes v_2)$ and $f_2 (v_1 \otimes v_2
r) = f(v_1) f(v_2 r) = f(v_1) f(v_2) r = f_2 (v_1 \otimes v_2) r$,
hence $f_2$ is a $\Sigma - \Sigma$-bimodule morphism.  By induction
we construct a $\Sigma - \Sigma$-bimodule morphism $f_n \colon
V^{(n)} \to \Lambda$ where $f_n (v_1 \otimes v_2 \otimes \cdots
\otimes v_n) = f(v_1)f(v_2) \cdots f(v_n)$.

Denote an element in $T(\Sigma, V)$ by $\Sigma_{i=0}^\infty v_i$,
where $v_i \in V^{(i)}$ for all $i \in \mathbb{N}$ and $v_i$ is zero
for all but a finite number of $i \in \mathbb{N}$. If we let $f_0 =
f \mid_\Sigma$ and $f_1 = f \mid_V$, we can define $\tilde{f} \colon
T(\Sigma, V) \to \Lambda$ by letting $\tilde{f} (\Sigma_{i=0}^\infty
v_i) = \Sigma_{i=0}^\infty f_i(v_i)$. For two elements $v =
\Sigma_{i=0}^\infty v_i$ and $w = \Sigma_{i=0}^\infty w_i$ in
$T(\Sigma, V)$ we see that
\begin{align*}\tilde{f} (v + w) &= \tilde{f} (\Sigma_{i=0}^\infty
v_i + w_i)\\ &= \Sigma_{i=0}^\infty f_i(v_i+w_i)\\& =
\Sigma_{i=0}^\infty (f_i(v_i)+f_i(w_i))\\& = \tilde{f}
(\Sigma_{i=0}^\infty v_i) + \tilde{f} (\Sigma_{i=0}^\infty w_i) =
\tilde{f} (v) + \tilde{f} (w) \\ \tilde{f} (v w) &= \tilde{f}
(\Sigma_{i=0}^\infty
 \Sigma_{j=0}^i v_j w_{i-j})\\ & = \Sigma_{i=0}^\infty \Sigma_{j=0}^i f_i(v_j w_{i-j}) \\& = \Sigma_{i=0}^\infty \Sigma_{j=0}^i f_j(v_j) f_{i-j}(w_{i-j})\\&
= (\Sigma_{i=0}^\infty f_i(v_i)) (\Sigma_{i=0}^\infty f_i(w_i))
\\& = \tilde{f} (\Sigma_{i=0}^\infty v_i) \tilde{f} (\Sigma_{i=0}^\infty w_i) = \tilde{f} (v)  \tilde{f} (w)
\end{align*} This shows that $\tilde{f}$ is a ring homomorphism.
It is easy to see that $\tilde{f}$ is a $k$-module homomorphism,
hence $\tilde{f}$ is a $k$-algebra homomorphism.  Since $\{ \Sigma,
V \}$ generates $T(\Sigma, V)$, the morphism $\tilde{f}$ unique.
\end{proof}

Since $\Lambda$ is finite dimensional over $k$ we know that
$\mathfrak{r} / \mathfrak{r}^2$ is finitely generated as a $\Lambda
/ \mathfrak{r} - \Lambda / \mathfrak{r}$-bimodule.  We can therefore
find elements $\{ r_1, r_2, \dots, r_m \}$ in $\mathfrak{r}$ such
that their images $\{ \bar{r}_1, \bar{r}_2, \dots, \bar{r}_m \}$ in
$\mathfrak{r} / \mathfrak{r}^2$ generate $\mathfrak{r} /
\mathfrak{r}^2$ as a $\Lambda / \mathfrak{r} - \Lambda /
\mathfrak{r}$-bimodule.  We let $\rl(\Lambda)$ denote the Lowey
length (radical length) of $\Lambda$, i.e.\ the smallest number $n
\in \mathbb{N}$ such that $\mathfrak{r}^n = (0)$.

The following result is a generalization of \cite[Theorem III.1.9
(a)(b)]{ARS}.  The proof of part (a) follows the lines of
\cite[Theorem III.1.9 (a)]{ARS}, and can be found in \cite[Lemma 3.1
(i)]{Li}.  We include the proof here for completeness. Part (b)
could have been proven similarly to the proof of \cite[Theorem
III.1.9 (b)]{ARS}.  We will use another proof since ideas from it
will be used later in Proposition
\ref{prop:tensor_algebra_surjective}.

\begin{proposition}\label{prop:p1p2_tensor_alg_onto}
Let $\Lambda$ be a finite dimensional basic $\mathfrak{r}$-split
$k$-algebra.
\begin{itemize}
\sloppy \item[(a)] Let $\{ r_1, r_2, \dots, r_m \}$ be elements in
$\mathfrak{r}$ such that their images $\{ \bar{r}_1, \bar{r}_2,
\dots, \bar{r}_m \}$ in $\mathfrak{r} / \mathfrak{r}^2$ generate
$\mathfrak{r} / \mathfrak{r}^2$ as a $\Lambda / \mathfrak{r} -
\Lambda / \mathfrak{r}$-bimodule.  Then $\{ D_1, D_2,  \dots, D_n,
r_1, r_2, \dots, r_m \}$ generate $\Lambda$ as a $k$-algebra, where
$\Lambda / \mathfrak{r} \simeq \oplus_{i=1}^n D_i$ is viewed as a
$k$-subalgebra of $\Lambda$ using a $k$-algebra homomorphism
$\epsilon \colon \Lambda / \mathfrak{r} \to \Lambda$ such that $\pi
\epsilon = \id_{\Lambda / \mathfrak{r}}$.
\item[(b)] There is a surjective $k$-algebra homomorphism $\tilde{f} \colon
T (\Lambda / \mathfrak{r}, \mathfrak{r} / \mathfrak{r}^2) \to
\Lambda$ such that $\oplus_{j \geq \rl (\Lambda)} (\mathfrak{r} /
\mathfrak{r}^2)^{(j)} \subset \ker \tilde{f} \subset \oplus_{j \geq
2} (\mathfrak{r} / \mathfrak{r}^2)^{(j)}$.
\end{itemize}
\end{proposition}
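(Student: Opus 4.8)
The plan is to prove (a) by an induction on powers of the radical $\mathfrak{r}$, and then to deduce (b) from (a) together with the $\mathfrak{r}$-splitting and Lemma~\ref{lemma:old_tensor_map}. For (a), let $S \subseteq \Lambda$ be the $k$-subalgebra generated by $D_1,\dots,D_n$ (identified with subsets of $\Lambda$ via $\epsilon$) and by $r_1,\dots,r_m$; the goal is $S=\Lambda$. Since the images $\bar r_i$ generate $\mathfrak{r}/\mathfrak{r}^2$ as a $\Lambda/\mathfrak{r}$-bimodule and $\Lambda/\mathfrak{r}=\oplus_i D_i$ lies in $S$, one has $\mathfrak{r}\subseteq M_1+\mathfrak{r}^2$ with $M_1:=\sum_{i=1}^m(\Lambda/\mathfrak{r})\,r_i\,(\Lambda/\mathfrak{r})\subseteq S$. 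I would then show by induction on $k$ that $\mathfrak{r}^k\subseteq M_k+\mathfrak{r}^{k+1}$, where $M_k\subseteq S$ is the $k$-span of the monomials $d_0 r_{i_1} d_1 r_{i_2}\cdots r_{i_k} d_k$ with $d_0,\dots,d_k\in\Lambda/\mathfrak{r}$: multiplying $\mathfrak{r}^k\subseteq M_k+\mathfrak{r}^{k+1}$ by $\mathfrak{r}\subseteq M_1+\mathfrak{r}^2$ gives $M_k M_1\subseteq M_{k+1}$ together with cross terms which, because $M_k\subseteq\mathfrak{r}^k$ and $M_1\subseteq\mathfrak{r}$, all land in $\mathfrak{r}^{k+2}$. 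Since $\mathfrak{r}$ is nilpotent, $\mathfrak{r}^{\rl(\Lambda)}=(0)$, so reading these inclusions downward from $k=\rl(\Lambda)-1$ yields $\mathfrak{r}^k\subseteq S$ for all $k\geq 1$, hence $\mathfrak{r}\subseteq S$; as $\Lambda=\epsilon(\Lambda/\mathfrak{r})+\mathfrak{r}$, this gives $S=\Lambda$. (Only that $\Lambda$ splits is used here; the $\mathfrak{r}$-split hypothesis enters in (b).)

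For (b), fix such an $\epsilon$. Since $\Lambda$ is $\mathfrak{r}$-split, the short exact sequence $0\to\mathfrak{r}^2\to\mathfrak{r}\to\mathfrak{r}/\mathfrak{r}^2\to 0$, viewed as a sequence of $\Lambda/\mathfrak{r}$-bimodules via $\epsilon$, splits, so there is a $\Lambda/\mathfrak{r}$-bimodule map $\sigma\colon\mathfrak{r}/\mathfrak{r}^2\to\mathfrak{r}\hookrightarrow\Lambda$ with $p\sigma=\id_{\mathfrak{r}/\mathfrak{r}^2}$, where $p\colon\mathfrak{r}\to\mathfrak{r}/\mathfrak{r}^2$ is the projection. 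Applying Lemma~\ref{lemma:old_tensor_map} with $\Sigma=\Lambda/\mathfrak{r}$, $V=\mathfrak{r}/\mathfrak{r}^2$ and $f=\epsilon\oplus\sigma\colon\Sigma\oplus V\to\Lambda$ — its hypotheses hold since $\epsilon$ is a $k$-algebra homomorphism and $\sigma$ is a $\Sigma$-bimodule morphism into $\Lambda$ with its $\epsilon$-induced $\Sigma$-bimodule structure — produces the required $k$-algebra homomorphism $\tilde{f}\colon T(\Lambda/\mathfrak{r},\mathfrak{r}/\mathfrak{r}^2)\to\Lambda$ extending $f$. To see $\tilde{f}$ is surjective, pick bimodule generators $\bar r_1,\dots,\bar r_m$ of $\mathfrak{r}/\mathfrak{r}^2$ and set $r_i:=\sigma(\bar r_i)\in\mathfrak{r}$; then $p(r_i)=\bar r_i$, so by (a) the elements $D_1,\dots,D_n,r_1,\dots,r_m$ generate $\Lambda$, and all of them lie in the image of $\tilde{f}$.

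It remains to locate $\ker\tilde{f}$. On the degree-$j$ homogeneous component $\tilde{f}$ sends a simple tensor $v_1\otimes\cdots\otimes v_j$ to $\sigma(v_1)\cdots\sigma(v_j)\in\mathfrak{r}^j$, so $\tilde{f}\big((\mathfrak{r}/\mathfrak{r}^2)^{(j)}\big)\subseteq\mathfrak{r}^j=(0)$ as soon as $j\geq\rl(\Lambda)$; hence $\bigoplus_{j\geq\rl(\Lambda)}(\mathfrak{r}/\mathfrak{r}^2)^{(j)}\subseteq\ker\tilde{f}$. For the opposite inclusion, take $x=\sum_{j\geq 0}x_j\in\ker\tilde{f}$ with $x_j\in(\mathfrak{r}/\mathfrak{r}^2)^{(j)}$; then $\epsilon(x_0)+\sigma(x_1)+w=0$, where $w=\sum_{j\geq 2}\tilde{f}(x_j)\in\mathfrak{r}^2$. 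Applying $\pi$ annihilates $\sigma(x_1)$ and $w$ and fixes $x_0$, forcing $x_0=0$; then $\sigma(x_1)=-w\in\mathfrak{r}^2=\ker p$, and $p\sigma=\id$ forces $x_1=0$. Thus $x\in\bigoplus_{j\geq 2}(\mathfrak{r}/\mathfrak{r}^2)^{(j)}$, which finishes the argument.

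The step I expect to require the most care is the monomial book-keeping in (a) — confirming that the cross terms produced when $\mathfrak{r}^k\subseteq M_k+\mathfrak{r}^{k+1}$ is multiplied by $\mathfrak{r}\subseteq M_1+\mathfrak{r}^2$ genuinely drop into $\mathfrak{r}^{k+2}$ rather than merely $\mathfrak{r}^{k+1}$, which hinges on the inclusion $M_k\subseteq\mathfrak{r}^k$ — together with the companion point in (b) of checking that the $\Sigma$-bimodule structure on $\Lambda$ pulled back along $\epsilon$ restricts on $\mathfrak{r}$ to exactly the structure for which $\sigma$ is a bimodule morphism, so that Lemma~\ref{lemma:old_tensor_map} is genuinely applicable.
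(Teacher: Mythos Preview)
Your proof is correct. Part~(b) matches the paper's argument essentially line for line: construct $\tilde{f}$ from $\epsilon$ and the bimodule splitting via Lemma~\ref{lemma:old_tensor_map}, deduce surjectivity from~(a), and bound $\ker\tilde{f}$ by observing that $\tilde{f}$ maps degree~$j$ into $\mathfrak{r}^j$ and is injective on degrees~$0$ and~$1$.

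For part~(a) you take a genuinely different route from the paper. The paper argues by induction on the Loewy length of $\Lambda$, passing to the quotient $\Lambda/\mathfrak{r}^m$, invoking the inductive hypothesis there, and then carefully chasing cosets to lift elements of $\mathfrak{r}^m$ back into the subalgebra~$A$. Your argument instead keeps $\Lambda$ fixed and runs an induction on the exponent~$k$, establishing $\mathfrak{r}^k\subseteq M_k+\mathfrak{r}^{k+1}$ and then reading downward once $\mathfrak{r}^{\rl(\Lambda)}=0$ kicks in. The inductive step you flag as delicate does work: the cross terms are $M_k\cdot\mathfrak{r}^2\subseteq\mathfrak{r}^{k+2}$ and $\mathfrak{r}^{k+1}\cdot M_1\subseteq\mathfrak{r}^{k+2}$ exactly because $M_k\subseteq\mathfrak{r}^k$, and $M_kM_1\subseteq M_{k+1}$ because $\epsilon(\Lambda/\mathfrak{r})$ is closed under multiplication. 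Your approach is more streamlined and avoids the coset bookkeeping; the paper's approach has the minor advantage of making explicit that the result holds for each quotient $\Lambda/\mathfrak{r}^m$ along the way, though this is not used elsewhere.
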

\begin{proof}
\begin{itemize}
\item[(a)] We will prove this by induction on the Lowey length of $\Lambda$.
So assume $\rl(\Lambda) = 1$.  Then $\mathfrak{r} = (0)$, so
$\Lambda = \oplus_{i =1}^n D_i$, and $\Lambda$ is obviously
generated as a $k$-algebra by the set $\{ D_1, D_2, \dots, D_n \}$.

\sloppy When $\rl(\Lambda) = 2$, then $\mathfrak{r}^2 = (0)$.  Since
$\Lambda / \mathfrak{r} \simeq \oplus_{i =1}^n D_i$ we have $\Lambda
= \langle D_1, \dots, D_n \rangle + \mathfrak{r}$. Moreover
$\mathfrak{r} \simeq \mathfrak{r} / \mathfrak{r}^2$, so we see that
$\Lambda$ is generated by the set $\{ D_1, \dots, D_n, r_1, \dots,
r_m \}$.

\sloppy Assume (a) is true for algebras with Lowey length $m$, and
assume $\rl(\Lambda) = m +1$. Let $A$ be the $k$-subalgebra of
$\Lambda$ generated by $\{ D_1, D_2,  \dots, D_n, r_1, r_2, \dots,
r_m \}$, and let $x \in \Lambda$. Since $\rl (A / (A \cap
\mathfrak{r}^m) ) = \rl (\Lambda / \mathfrak{r}^m) = m$, we have by
induction that $A / (A \cap \mathfrak{r}^m) \simeq \Lambda /
\mathfrak{r}^m$. Therefore there exists some $y \in A$ such that $x
+ \mathfrak{r}^m = y + (A \cap \mathfrak{r}^m)$. Then $x - y \in
\mathfrak{r}^m$, hence $x-y = \Sigma_{i=1}^s \alpha_i \beta_i$ where
$\alpha_i \in \mathfrak{r}^{m-1}$ and $\beta_i \in \mathfrak{r}$.
Since $\mathfrak{r}^{m-1}  / \mathfrak{r}^m \simeq (A \cap
\mathfrak{r}^{m-1}) / (A \cap \mathfrak{r}^m)$, we get $\alpha_i +
\mathfrak{r}^m = a_i + (A \cap \mathfrak{r}^m)$ where $a_i \in A
\cap \mathfrak{r}^{m-1}$.  This yields $\alpha_i = a_i + a_i'$,
where $a_i \in A \cap \mathfrak{r}^{m-1}$ and $a_i' \in
\mathfrak{r}^m$. Similarly, since $\mathfrak{r}  / \mathfrak{r}^m
\simeq (A \cap \mathfrak{r}) / (A \cap \mathfrak{r}^m)$, we get that
$\beta_i + \mathfrak{r}^m = b_i + (A \cap \mathfrak{r}^m)$ where
$b_i \in A \cap \mathfrak{r}$.  Hence $\beta_i = b_i + b_i'$ where
$b_i \in A \cap \mathfrak{r}$ and $b_i' \in \mathfrak{r}^m$. Then
$\alpha_i \beta_i = (a_i + a_i') (b_i + b_i') = a_i b_i$ since $a_i
b_i' \in \mathfrak{r}^{2m-1} = (0)$, $a_i' b_i \in
\mathfrak{r}^{m+1} = (0)$, and $a_i' b_i' \in \mathfrak{r}^{2m} =
(0)$. This shows that $x-y \in A$, and since $y \in A$ we get that
$x \in A$. Hence $\Lambda = A$, so $\Lambda$ is generated by the set
$\{ D_1, \dots, D_n, r_1, \dots, r_m \}$.

\item[(b)] Let $f\mid_{\Lambda /
\mathfrak{r}} = \epsilon \colon \Lambda / \mathfrak{r} \to \Lambda$
be a lifting of the natural projection $\pi \colon \Lambda \to
\Lambda / \mathfrak{r}$, and view $\Lambda / \mathfrak{r}$ as a
$k$-subalgebra of $\Lambda$ via $\epsilon$. Since $\Lambda$ is
$\mathfrak{r}$-split, the sequence
$$0 \to \mathfrak{r}^2 \to \mathfrak{r} \to \mathfrak{r} /
\mathfrak{r}^2 \to 0$$ splits as a sequence of $\Lambda /
\mathfrak{r} - \Lambda / \mathfrak{r}$-bimodules.  Let $f
\mid_{\mathfrak{r} / \mathfrak{r}^2} \colon \mathfrak{r} /
\mathfrak{r}^2 \to \Lambda$ be given by the splitting map
$\mathfrak{r} / \mathfrak{r}^2 \hookrightarrow \mathfrak{r}$
composed with the inclusion $\mathfrak{r} \hookrightarrow \Lambda$
viewed as a $\Lambda / \mathfrak{r} - \Lambda /
\mathfrak{r}$-bimodule morphism via $\epsilon$.

Using Lemma \ref{lemma:old_tensor_map} we get induced a $k$-algebra
homomorphism $\tilde{f} \colon T (\Lambda / \mathfrak{r},
\mathfrak{r} / \mathfrak{r}^2) \to \Lambda$, and by (a) this
morphism is surjective. Since $\tilde{f} \mid_{\Lambda /
\mathfrak{r} \oplus \mathfrak{r} / \mathfrak{r}^2}$ is a
monomorphism, and the image intersects trivially with
$\mathfrak{r}^2$, we see that $\ker \tilde{f} \subset \oplus_{j \geq
2} (\mathfrak{r} / \mathfrak{r}^2)^{(j)}$. On the other hand,
$\tilde{f} ((\mathfrak{r} / \mathfrak{r}^2)^{(j)}) \subset
\mathfrak{r}^j$, and since $\mathfrak{r}^{\rl(\Lambda)} = (0)$, we
get that $\ker \tilde{f} \supset \oplus_{j \geq \rl (\Lambda)}
(\mathfrak{r} / \mathfrak{r}^2)^{(j)}$.
\end{itemize}
\end{proof}

As mentioned, the result \cite[Theorem III.1.9 (a)(b)]{ARS} is a
special case of Proposition \ref{prop:p1p2_tensor_alg_onto}. In
\cite[Theorem III.1.9 (a)(b)]{ARS} it is assumed that $\Lambda$ is a
finite dimensional basic $k$-algebra where $k$ is an algebraically
closed field, and we will show in Corollary
\ref{cor:closed_is_perfect} that this implies that $\Lambda$ is
$\mathfrak{r}$-split.  The main difference between Proposition
\ref{prop:p1p2_tensor_alg_onto} and \cite[Theorem III.1.9
(a)(b)]{ARS} is that we have replaced the complete set of pairwise
orthogonal idempotents $\{ e_1, e_2, \dots e_n \}$ with the set $\{
D_1, D_2, \dots D_n \}$.  The reason for this change is that in the
algebraically closed case $e_i$ would have generated $D_i$ as a
$k$-algebra, but in our non-algebraically closed case this is no
longer true.

Let $\Lambda$ be a finite dimensional basic $k$-algebra.  Then
$\Lambda / \mathfrak{r} = \oplus_{i \in I} D_i = D$, and
\begin{align*} \mathfrak{r} / \mathfrak{r}^2 & = (\Lambda /
\mathfrak{r})( \mathfrak{r} / \mathfrak{r}^2) (\Lambda /
\mathfrak{r}) = (\oplus_{i \in I} D_i) ( \mathfrak{r} /
\mathfrak{r}^2) (\oplus_{i \in I} D_i)
\\ & = \oplus_{i,j \in I} (D_j ( \mathfrak{r} / \mathfrak{r}^2) D_i)
= \oplus_{i,j \in I} ({_j M_i}) = M \end{align*} Since $k$ sits
inside the center of $\Lambda$, we have $k \subset D$.  Moreover
$\lambda \lambda' = \lambda' \lambda$ and $\lambda m = m \lambda$
for all $\lambda \in k$, $\lambda' \in D_i$ and $m \in {_j M_i}$ for
all $i,j \in I$. Since $\Lambda$ is finite dimensional, we know that
$D_i$ and $_j M_i$ are finite dimensional over $k$ for all $i, j \in
I$. Hence $S_\Lambda = (D_i, {_j M_i})_{i,j \in I}$ is a
$k$-species, and it will be called the \emph{species of
$\Lambda$}\index{species!of $\Lambda$}. Observe that we do not
assume $\Lambda$ is $\mathfrak{r}$-split to define the species of
$\Lambda$.

Remember that we denote the ideal $\oplus_{i \geq 1} M^{(i)}$ in
$T(\mathcal{S})$ by $J$, where $\mathcal{S}$ is the species $(D_i,
{_j M_i})_{i,j \in I}$ and $M = \oplus_{i,j \in I} ({_j M_i})$.

\begin{proposition}\label{prop:p1p2_ring_is_tensor_ring_of_species}
Let $\Lambda$ be a finite dimensional basic $\mathfrak{r}$-split
$k$-algebra.  Then $\Lambda \simeq T(\mathcal{S}_\Lambda)/\langle
\rho \rangle$ with $J^{\rl (\Lambda)} \subset \langle \rho \rangle
\subset J^2$, where $\mathcal{S}_\Lambda$ is the species of
$\Lambda$ and $\rho$ is a set of relations.
\end{proposition}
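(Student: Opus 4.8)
The plan is to feed the surjection of Proposition \ref{prop:p1p2_tensor_alg_onto}(b) into the first isomorphism theorem and then identify its kernel with the ideal generated by a set of relations. First, note that $T(\mathcal{S}_\Lambda)$ is by definition the tensor algebra $T(D,M)$ with $D=\bigoplus_{i\in I}D_i$ and $M=\bigoplus_{i,j\in I}({_jM_i})$; since the species of $\Lambda$ was built precisely so that $\Lambda/\mathfrak{r}\simeq D$ as a $k$-algebra and $\mathfrak{r}/\mathfrak{r}^2\simeq M$ as a $D$-$D$-bimodule, we have $T(\mathcal{S}_\Lambda)=T(\Lambda/\mathfrak{r},\mathfrak{r}/\mathfrak{r}^2)$ and $(\mathfrak{r}/\mathfrak{r}^2)^{(j)}=M^{(j)}$. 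Proposition \ref{prop:p1p2_tensor_alg_onto}(b) therefore gives a surjective $k$-algebra homomorphism $\tilde{f}\colon T(\mathcal{S}_\Lambda)\to\Lambda$ with $\bigoplus_{j\geq\rl(\Lambda)}M^{(j)}\subseteq\ker\tilde{f}\subseteq\bigoplus_{j\geq 2}M^{(j)}$. A short induction from $M^{(a)}M^{(b)}=M^{(a+b)}$ shows $J^{n}=\bigoplus_{j\geq n}M^{(j)}$ for all $n\geq 1$, so these inclusions read $J^{\rl(\Lambda)}\subseteq\ker\tilde{f}\subseteq J^{2}$, and the first isomorphism theorem yields $\Lambda\simeq T(\mathcal{S}_\Lambda)/\ker\tilde{f}$.

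It then remains to exhibit $\ker\tilde{f}$ as $\langle\rho\rangle$ for a set $\rho$ of relations. Although $\ker\tilde{f}$ need not be homogeneous, it is a two-sided ideal, hence stable under left and right multiplication by the orthogonal idempotents $e_i$ (the identity elements of the subrings $D_i\subseteq T(\mathcal{S}_\Lambda)$), so writing $K_{b,a}=e_b(\ker\tilde{f})e_a$ we obtain $\ker\tilde{f}=\bigoplus_{a,b\in I}K_{b,a}$, with $K_{b,a}\subseteq e_bT(\mathcal{S}_\Lambda)e_a$. The latter space is spanned by the pure tensors running along paths of $Q_{\mathcal{S}_\Lambda}$ from $a$ to $b$, and since $K_{b,a}\subseteq\ker\tilde{f}\subseteq J^{2}$ only paths of length $\geq 2$ contribute. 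Hence every element of $K_{b,a}$ is a finite $k$-linear combination of pure tensors $m_{n}\otimes\cdots\otimes m_{1}$ along paths of length $n\geq 2$ from $a$ to $b$, i.e.\ it is a relation ${_b\sigma_a}$ in the sense of our definition. Taking $\rho=\bigcup_{a,b\in I}K_{b,a}$ (or any subset generating each $K_{b,a}$ as a $D_b-D_a$-bimodule), we get $\langle\rho\rangle\subseteq\ker\tilde{f}$ since $\rho\subseteq\ker\tilde{f}$, while conversely every $x\in\ker\tilde{f}$ equals $\sum_{a,b}e_bxe_a\in\sum_{a,b}K_{b,a}\subseteq\langle\rho\rangle$; thus $\langle\rho\rangle=\ker\tilde{f}$, and combined with the first paragraph this gives $\Lambda\simeq T(\mathcal{S}_\Lambda)/\langle\rho\rangle$ with $J^{\rl(\Lambda)}\subseteq\langle\rho\rangle\subseteq J^{2}$.

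The one delicate point, and the place I would be most careful, is exactly the non-homogeneity of $\ker\tilde{f}$: because $\Lambda$ carries no grading, $\tilde{f}$ does not respect the grading of $T(\mathcal{S}_\Lambda)$, so one cannot reduce to relations concentrated in a single degree. What rescues the argument is that the source--target decomposition $\ker\tilde{f}=\bigoplus_{a,b}K_{b,a}$ reflects the central idempotents of $D=T(\mathcal{S}_\Lambda)/J$ rather than the grading, and this alone suffices to cut $\ker\tilde{f}$ into genuine relations; the remaining verifications (that $J^{n}=\bigoplus_{j\geq n}M^{(j)}$, that sub-bimodules of $\bigoplus_{a,b}e_bT(\mathcal{S}_\Lambda)e_a$ decompose correspondingly, and that an element of $e_bT(\mathcal{S}_\Lambda)e_a$ is a finite sum of pure path tensors) are routine.
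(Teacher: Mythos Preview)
Your proof is correct and follows essentially the same approach as the paper's: both invoke Proposition~\ref{prop:p1p2_tensor_alg_onto}(b), translate the kernel bounds into $J^{\rl(\Lambda)}\subseteq\ker\tilde f\subseteq J^2$, and then use the idempotent decomposition $x=\sum_{i,j}e_j x e_i$ to cut kernel elements into genuine relations. The only cosmetic difference is ordering---the paper first picks an arbitrary generating set for $\ker\tilde f$ and then decomposes each generator by idempotents, whereas you decompose the whole kernel first and take the pieces as $\rho$---but the substance is identical.
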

\begin{proof}
\sloppy Observe that $T(\mathcal{S}_\Lambda) = T(D, M) = T(\Lambda /
\mathfrak{r}, \mathfrak{r} / \mathfrak{r}^2)$.  From Proposition
\ref{prop:p1p2_tensor_alg_onto} we have an epimorphism $\tilde{f}
\colon T(\mathcal{S}_\Lambda) = T(\Lambda / \mathfrak{r},
\mathfrak{r} / \mathfrak{r}^2) \to \Lambda$ with $\oplus_{j \geq \rl
(\Lambda)} (\mathfrak{r} / \mathfrak{r}^2)^{(j)} \subset \ker
\tilde{f} \subset \oplus_{j \geq 2} (\mathfrak{r} /
\mathfrak{r}^2)^{(j)}$.  Let $\rho' = \{ \sigma'_t \}_{t\in T'}$ be
a set of generators for $\ker \tilde{f}$ as an ideal in
$T(\mathcal{S}_\Lambda)$. Since $\mathfrak{r} / \mathfrak{r}^2 = M$
and $\ker \tilde{f} = \langle \rho' \rangle$, we have $J^{\rl
(\Lambda)} = \oplus_{j \geq \rl (\Lambda)} M^{(j)} \subset \langle
\rho' \rangle \subset \oplus_{j \geq 2} M^{(j)} = J^2$.

We want to transfer $\rho'$ into a set of relations.  Let $\{ e_i
\}_{i \in I}$ be a complete set of pairwise orthogonal primitive
idempotents in $\Lambda$.  Let $\sigma'_t \in \rho$, then $\sigma'_t
= 1_\Lambda \sigma'_t 1_\Lambda = (\Sigma_{i \in I} e_i) \sigma'_t
(\Sigma_{i \in I} e_i) = \Sigma_{i,j \in I} e_j \sigma'_t e_i =
\Sigma_{i,j \in I}  ({_j \rho_{t i}})$ where ${_j \rho_{t i}} = e_j
\sigma'_t e_i$. All ${_j \rho_{t i}}$ are sets of relations, so
letting $\rho = \cup_{t \in T'} \cup_{i,j \in I} ({_j \rho_{ti}})$,
we see that $\langle \rho \rangle = \langle \rho' \rangle$ where
$\rho$ is a set of relations.

Using $T(\mathcal{S}_\Lambda)/\langle \rho \rangle \simeq T(\Lambda
/ \mathfrak{r}, \mathfrak{r} / \mathfrak{r}^2) / \ker \tilde{f}
\simeq \Lambda$, we have proven the proposition.
\end{proof}

\sloppy Observe that the set of relations $\rho = \{ \sigma_t
\}_{t\in T}$ in Proposition
\ref{prop:p1p2_ring_is_tensor_ring_of_species} can be chosen to be
finite: The $\rl(\Lambda)$-fold tensor product $M^{(\rl(\Lambda))} =
(\mathfrak{r} /\mathfrak{r}^2)^{(\rl(\Lambda))}$ is finite
dimensional over $k$, so there exists a finite set $\{ \sigma_t
\}_{t\in T'}$ of generators, and this set of generators can be
chosen to consist of relations. Since $(\mathfrak{r}
/\mathfrak{r}^2)^{(\rl(\Lambda))}$ generates $\oplus_{j \geq \rl
(\Lambda)} (\mathfrak{r} / \mathfrak{r}^2)^{(j)}$, the finite set $\{
\sigma_t \}_{t\in T'}$ also generates
$\oplus_{j \geq \rl (\Lambda)} (\mathfrak{r} /
\mathfrak{r}^2)^{(j)}$. Since $\ker \tilde{f} / (\oplus_{j \geq \rl
(\Lambda)} (\mathfrak{r} / \mathfrak{r}^2)^{(j)})$ is finite
dimensional, there exists a finite set of elements $\{ \sigma_t
\}_{t\in T''}$ in $\ker \tilde{f}$ such that the corresponding
elements in $\ker \tilde{f} / (\oplus_{j \geq \rl (\Lambda)}
(\mathfrak{r} / \mathfrak{r}^2)^{(j)})$ is a generating set, and
also $\{ \sigma_t \}_{t\in T''}$ can be chosen to consist of
relations.  Then, letting $T = T' \cup T''$, we know $\{ \sigma_t
\}_{t\in T}$ to be a finite set of relations which generates $\ker
\tilde{f}$.

Using Corollary
\ref{cor:species_with_rel_is_tensor_with_ideal} and Proposition
\ref{prop:p1p2_ring_is_tensor_ring_of_species} we get the following corollary.
\begin{corollary}
Let $\Lambda$ be a finite dimensional basic $\mathfrak{r}$-split
$k$-algebra, and let $\mathcal{S}_\Lambda$ be the species of
$\Lambda$. Then the category $\mod \Lambda$ of finite dimensional
left $\Lambda$-modules is equivalent to $\rep (\mathcal{S}_\Lambda,
\rho)$ where $\rho$ is a set of relations such that $J^{\rl
(\Lambda)} \subset \langle \rho \rangle \subset J^2$.
\end{corollary}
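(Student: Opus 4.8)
The plan is to chain together the two results established immediately above, so this corollary is essentially a formal composition. First I would invoke Proposition~\ref{prop:p1p2_ring_is_tensor_ring_of_species}: since $\Lambda$ is a finite dimensional basic $\mathfrak{r}$-split $k$-algebra, there is a $k$-algebra isomorphism $\Lambda \simeq T(\mathcal{S}_\Lambda)/\langle \rho \rangle$ for a suitable (finite) set of relations $\rho$ of the species $\mathcal{S}_\Lambda$, with $J^{\rl(\Lambda)} \subset \langle \rho \rangle \subset J^2$. A $k$-algebra isomorphism induces an equivalence between the corresponding module categories, so $\mod \Lambda \simeq \mod (T(\mathcal{S}_\Lambda)/\langle \rho \rangle)$.

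Next I would apply Corollary~\ref{cor:species_with_rel_is_tensor_with_ideal}. To invoke it I need $\mathcal{S}_\Lambda$ to be a finite $k$-species: that $\mathcal{S}_\Lambda = (D_i, {_j M_i})_{i,j \in I}$ is a $k$-species was checked in the discussion preceding Proposition~\ref{prop:p1p2_ring_is_tensor_ring_of_species}, and finiteness is immediate from $\Lambda$ being finite dimensional, which forces the index set $I$ to be finite. Corollary~\ref{cor:species_with_rel_is_tensor_with_ideal} then yields an equivalence $\rep (\mathcal{S}_\Lambda, \rho) \simeq \mod (T(\mathcal{S}_\Lambda)/\langle \rho \rangle)$ for the same $\rho$.

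Composing the two equivalences gives $\mod \Lambda \simeq \rep (\mathcal{S}_\Lambda, \rho)$, and the containment $J^{\rl(\Lambda)} \subset \langle \rho \rangle \subset J^2$ is simply transported from Proposition~\ref{prop:p1p2_ring_is_tensor_ring_of_species}. There is no genuine obstacle; the only point requiring attention is bookkeeping, namely making sure the same ideal $\langle \rho \rangle$ is fed into both inputs, which is automatic once we take the $\rho$ produced by Proposition~\ref{prop:p1p2_ring_is_tensor_ring_of_species} as the input to Corollary~\ref{cor:species_with_rel_is_tensor_with_ideal}.
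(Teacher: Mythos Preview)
Your proposal is correct and matches the paper's own argument: the paper simply states that the corollary follows by combining Corollary~\ref{cor:species_with_rel_is_tensor_with_ideal} and Proposition~\ref{prop:p1p2_ring_is_tensor_ring_of_species}, which is exactly the composition you spell out.
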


We now want to investigate finite dimensional \emph{hereditary
algebras}\index{hereditary!algebra}, i.e.\ finite dimensional
algebras where all left ideals are projective.  Hereditary algebras
have been studied thoroughly, in particular the rest of the results
in this section are either well known or similar to well known
results. The next two lemmas are restated for completeness.

\begin{lemma}\label{lemma:ideal_in_square_radical}\cite[Lemma III.1.11]{ARS}
If $\Lambda$ is a basic finite dimensional hereditary algebra and
$\mathfrak{a}$ is a non-zero ideal of $\Lambda$ contained in
$\mathfrak{r}^2$, then $\Lambda / \mathfrak{a}$ is not hereditary.
\end{lemma}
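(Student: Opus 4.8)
The plan is to argue by contradiction: assume $\Lambda$ is hereditary, $\mathfrak{a} \subseteq \mathfrak{r}^2$ is a non-zero ideal, and $\Lambda/\mathfrak{a}$ is hereditary, and derive a contradiction. The global-dimension characterization of hereditary algebras says that an algebra $\Gamma$ is hereditary if and only if the radical of $\Gamma$ is projective as a left $\Gamma$-module (equivalently, $\operatorname{gl.dim}\Gamma \leq 1$). So my first step is to record that $\mathfrak{r}$ is a projective left $\Lambda$-module, and to note $\bar{\mathfrak{r}} := \mathfrak{r}/\mathfrak{a}$ is the radical of $\bar\Lambda := \Lambda/\mathfrak{a}$, since $\mathfrak{a} \subseteq \mathfrak{r}^2 \subseteq \mathfrak{r}$; hence by the hereditary hypothesis on $\bar\Lambda$, $\bar{\mathfrak{r}}$ is a projective left $\bar\Lambda$-module.

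The heart of the argument is a dimension count on $\mathfrak{r}/\mathfrak{r}^2$ carried out over the quotient $\bar\Lambda$. Applying $\Lambda/\mathfrak{r} \otimes_\Lambda -$ (i.e. passing to the top) to the projective $\Lambda$-module $\mathfrak{r}$, one computes $\Lambda/\mathfrak{r} \otimes_\Lambda \mathfrak{r} \cong \mathfrak{r}/\mathfrak{r}^2$. Doing the same for the projective $\bar\Lambda$-module $\bar{\mathfrak{r}}$, one gets $\bar\Lambda/\bar{\mathfrak{r}} \otimes_{\bar\Lambda} \bar{\mathfrak{r}} \cong \bar{\mathfrak{r}}/\bar{\mathfrak{r}}^2$. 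Now $\Lambda/\mathfrak{r} \cong \bar\Lambda/\bar{\mathfrak{r}}$, and I want to compare $\mathfrak{r}/\mathfrak{r}^2$ with $\bar{\mathfrak{r}}/\bar{\mathfrak{r}}^2$. Since $\mathfrak{a} \subseteq \mathfrak{r}^2$, we have $\bar{\mathfrak{r}}^2 = (\mathfrak{r}^2 + \mathfrak{a})/\mathfrak{a} = \mathfrak{r}^2/\mathfrak{a}$, so $\bar{\mathfrak{r}}/\bar{\mathfrak{r}}^2 = (\mathfrak{r}/\mathfrak{a})/(\mathfrak{r}^2/\mathfrak{a}) \cong \mathfrak{r}/\mathfrak{r}^2$. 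So $\mathfrak{r}/\mathfrak{r}^2$ and $\bar{\mathfrak{r}}/\bar{\mathfrak{r}}^2$ are isomorphic as $\Lambda/\mathfrak{r}$-bimodules (in particular have the same $k$-dimension). On the other hand, because $\mathfrak{r}$ is projective over $\Lambda$ and $\bar{\mathfrak{r}}$ is projective over $\bar\Lambda$, each decomposes as a direct sum of indecomposable projectives with multiplicities determined by the tops $\mathfrak{r}/\mathfrak{r}^2$ and $\bar{\mathfrak{r}}/\bar{\mathfrak{r}}^2$ respectively; since these tops agree, $\mathfrak{r}$ and $\bar{\mathfrak{r}}$ are built from the "same" projective summands, forcing $\dim_k \mathfrak{r} = \dim_k \bar{\mathfrak{r}}$ once one checks the indecomposable projectives of $\Lambda$ and $\bar\Lambda$ have matching dimensions — but they do not: $\dim_k \bar\Lambda = \dim_k \Lambda - \dim_k \mathfrak{a} < \dim_k \Lambda$, and $\dim_k \bar{\mathfrak{r}} = \dim_k \mathfrak{r} - \dim_k \mathfrak{a} < \dim_k \mathfrak{r}$. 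The contradiction will come from pinning down that the projective cover structure is genuinely rigid: writing $\mathfrak{r} \cong \bigoplus_i P_i^{a_i}$ over $\Lambda$ and $\bar{\mathfrak{r}} \cong \bigoplus_i \bar P_i^{b_i}$ over $\bar\Lambda$ (with $P_i = \Lambda e_i$, $\bar P_i = \bar\Lambda \bar e_i$), the equality of tops gives $a_i = b_i$ for all $i$; then $\dim_k \mathfrak{r} = \sum_i a_i \dim_k P_i > \sum_i a_i \dim_k \bar P_i = \dim_k \bar{\mathfrak{r}}$ whenever some $a_i > 0$ with $\dim_k \bar P_i < \dim_k P_i$, while $\dim_k\mathfrak r>\dim_k\bar{\mathfrak r}$ holds anyway since $\mathfrak a\neq 0$; the two sides must be made to conflict by instead comparing $\dim_k \mathfrak{a}$ computed two ways.

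The cleanest route, which I would actually write, avoids summand bookkeeping: project cleanly. Since $\mathfrak{r}$ is $\Lambda$-projective and $\bar{\mathfrak{r}}$ is $\bar\Lambda$-projective with isomorphic tops, and since for any $\bar\Lambda$-module $N$ we have $\dim_k N = \sum_i \dim_k D_i^{-1}\cdot(\text{composition data})$ — more simply, apply $\bar\Lambda \otimes_\Lambda -$ to $\mathfrak{r}$: one gets $\bar\Lambda \otimes_\Lambda \mathfrak{r} \cong \mathfrak{r}/\mathfrak{a}\mathfrak{r}$, a projective $\bar\Lambda$-module whose top is again $\mathfrak{r}/\mathfrak{r}^2 \cong \bar{\mathfrak{r}}/\bar{\mathfrak{r}}^2$. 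A projective module over $\bar\Lambda$ is determined up to isomorphism by its top, so $\mathfrak{r}/\mathfrak{a}\mathfrak{r} \cong \bar{\mathfrak{r}} = \mathfrak{r}/\mathfrak{a}$ as $\bar\Lambda$-modules, hence (comparing $k$-dimensions) $\dim_k \mathfrak{a}\mathfrak{r} = \dim_k \mathfrak{a}$, i.e. $\mathfrak{a}\mathfrak{r} = \mathfrak{a}$ since $\mathfrak{a}\mathfrak{r} \subseteq \mathfrak{a}$. But $\mathfrak{a} \subseteq \mathfrak{r}$ gives $\mathfrak{a} = \mathfrak{a}\mathfrak{r} = \mathfrak{a}\mathfrak{r}^2 = \cdots = \mathfrak{a}\mathfrak{r}^n = 0$ for $n = \rl(\Lambda)$ by Nakayama/nilpotence of $\mathfrak{r}$, contradicting $\mathfrak{a} \neq 0$.

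The main obstacle is getting the comparison of tops exactly right and justifying "a projective $\bar\Lambda$-module is determined by its top" in the present generality (over a basic, not necessarily split, algebra with division-ring endomorphism rings $D_i$) — this is standard (projective covers over semiperfect rings), but I must make sure the isomorphism $\bar\Lambda/\bar{\mathfrak{r}} \otimes_{\bar\Lambda} (\mathfrak{r}/\mathfrak{a}\mathfrak{r}) \cong \mathfrak{r}/\mathfrak{r}^2 \cong \bar{\mathfrak{r}}/\bar{\mathfrak{r}}^2$ is an isomorphism of $\bar\Lambda/\bar{\mathfrak{r}}$-modules and not merely an abstract $k$-vector-space isomorphism, so that it lifts to an isomorphism of the projective covers. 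Everything else is routine: $\mathfrak{r}$ projective $\Rightarrow$ $\mathfrak{a}\mathfrak{r}\subseteq\mathfrak a$ with equality forced, then nilpotence of $\mathfrak{r}$ kills $\mathfrak{a}$.
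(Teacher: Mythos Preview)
The paper does not give its own proof of this lemma; it is simply quoted from \cite{ARS} (Lemma III.1.11) for later use, so there is nothing to compare against directly. Your argument---specifically the ``cleanest route'' you settle on in the second half---is correct and is essentially the standard one: $\bar\Lambda\otimes_\Lambda\mathfrak{r}\cong\mathfrak{r}/\mathfrak{a}\mathfrak{r}$ and $\bar{\mathfrak{r}}=\mathfrak{r}/\mathfrak{a}$ are both projective $\bar\Lambda$-modules with top $\mathfrak{r}/\mathfrak{r}^2$, hence isomorphic (finitely generated projectives over an artin algebra being determined by their tops), so a dimension count gives $\mathfrak{a}=\mathfrak{a}\mathfrak{r}$, and nilpotence of $\mathfrak{r}$ then forces $\mathfrak{a}=0$.

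The point you flag as the main obstacle---that the two identifications of the top with $\mathfrak{r}/\mathfrak{r}^2$ must agree as $\Lambda/\mathfrak{r}$-modules and not merely as $k$-vector spaces---is genuine but easy: in both cases one is literally computing the quotient $\mathfrak{r}/\mathfrak{r}^2$ with its canonical $\Lambda/\mathfrak{r}$-action, so the identifications are equalities, not just abstract isomorphisms. The first half of your write-up (the summand bookkeeping) is unnecessary once you have the tensor argument, and could be dropped in a final version.
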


\begin{lemma}\label{lemma:map_between_proj_is_mono}\cite[Lemma III.1.12]{ARS}
If $\Lambda$ is a basic finite dimensional hereditary algebra, and
$f \colon P \to Q$ is a non-zero morphism between indecomposable
projective $\Lambda$-modules, then $f$ is a monomorphism.
\end{lemma}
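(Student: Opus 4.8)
The plan is to exploit the defining property of a hereditary algebra directly: every submodule of a projective $\Lambda$-module is again projective. So, given a non-zero morphism $f \colon P \to Q$ between indecomposable projective $\Lambda$-modules, I would first record the canonical short exact sequence
$$0 \to \ker f \to P \to \Im f \to 0,$$
where $\Im f$ is viewed as a submodule of $Q$.

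The next step is the key observation. Since $\Lambda$ is hereditary and $\Im f$ is a submodule of the projective module $Q$, the module $\Im f$ is itself projective. Hence the short exact sequence above splits, yielding an isomorphism of $\Lambda$-modules $P \simeq \ker f \oplus \Im f$.

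Finally I would invoke that $P$ is indecomposable. Because $f \neq 0$ we have $\Im f \neq (0)$, so $\Im f$ is a non-zero direct summand of the indecomposable module $P$; consequently $\ker f = (0)$ and $\Im f \simeq P$, which is to say that $f$ is a monomorphism.

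The main obstacle — indeed the only input that is not formal bookkeeping — is the fact that over a hereditary algebra submodules of projectives are projective; this is what makes the sequence split. The remaining ingredients are routine: the splitting of a short exact sequence whose right-hand term is projective, and the elementary remark that a non-zero direct summand of an indecomposable module is the whole module. One minor point worth noting in passing is that $\ker f$ and $\Im f$ are finitely generated, which is automatic here since $\Lambda$ is finite dimensional and $P, Q$ are finitely generated, so the hereditary hypothesis applies to them without fuss.
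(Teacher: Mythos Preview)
Your argument is correct and is precisely the standard proof (the one given in \cite[Lemma III.1.12]{ARS}): the image of $f$ is a submodule of the projective $Q$, hence projective by heredity, so the sequence $0 \to \ker f \to P \to \Im f \to 0$ splits and indecomposability of $P$ forces $\ker f = 0$. Note that the paper itself does not supply a proof for this lemma --- it merely restates it with a citation to \cite{ARS} --- so there is nothing further to compare.
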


\begin{proposition}
Let $\Lambda$ be a basic finite dimensional hereditary
$\mathfrak{r}$-split $k$-algebra, let $\mathcal{S}_\Lambda$ be the
species of $\Lambda$, and $Q_{\mathcal{S}_\Lambda}$ the underlying
quiver of $\mathcal{S}_\Lambda$.  Then $Q_{\mathcal{S}_\Lambda}$ is
finite and without oriented cycles, and $\Lambda$ is isomorphic to
$T(\mathcal{S}_\Lambda)$.
\end{proposition}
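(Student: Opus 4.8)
The plan is to combine Proposition~\ref{prop:p1p2_ring_is_tensor_ring_of_species} with Lemmas~\ref{lemma:ideal_in_square_radical} and~\ref{lemma:map_between_proj_is_mono}. By Proposition~\ref{prop:p1p2_ring_is_tensor_ring_of_species} we may write $\Lambda \simeq T(\mathcal{S}_\Lambda)/\langle \rho \rangle$ with $J^{\rl(\Lambda)} \subseteq \langle \rho \rangle \subseteq J^2$, so it suffices to prove that $Q_{\mathcal{S}_\Lambda}$ is finite and has no oriented cycles and that $\langle \rho \rangle = 0$. Finiteness is immediate: $\Lambda$ is finite dimensional, so $I$ is finite and each ${_jM_i} = D_j(\mathfrak{r}/\mathfrak{r}^2)D_i$ is finite dimensional over $k$, whence $\mathcal{S}_\Lambda$ is a finite $k$-species.

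First I would rule out oriented cycles, arguing by contradiction. Suppose $i = i_0 \to i_1 \to \cdots \to i_n = i$ is an oriented cycle in $Q_{\mathcal{S}_\Lambda}$ with $n \geq 1$, and fix a complete set $\{e_j\}_{j \in I}$ of pairwise orthogonal primitive idempotents of $\Lambda$, so that $P_j = \Lambda e_j$ and, under the identification $\Hom_\Lambda(\Lambda e_a, \Lambda e_b) \simeq e_a \Lambda e_b$, the bimodule ${_bM_a} \simeq e_b(\mathfrak{r}/\mathfrak{r}^2)e_a$. Each arrow of the cycle thus supplies an element $r_l \in \mathfrak{r} \setminus \mathfrak{r}^2$ in the appropriate corner, hence a nonzero $\Lambda$-homomorphism $\phi_l$ between $P_{i_{l-1}}$ and $P_{i_l}$, consistently oriented so that the $\phi_l$ compose around the cycle to a morphism $g \colon P_i \to P_i$. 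By Lemma~\ref{lemma:map_between_proj_is_mono} each $\phi_l$ is a monomorphism, so $g$ is a nonzero monomorphism; since $P_i$ is finite dimensional, $g$ is an isomorphism. On the other hand $g$ is given by right multiplication by a product $r$ of the $r_l$, so $r \in e_i\mathfrak{r}^n e_i \subseteq e_i\mathfrak{r}e_i$ and $\Im g = \Lambda r \subseteq \mathfrak{r}e_i = \rad P_i$, which is a proper submodule of $P_i$ by Nakayama's lemma; thus $g$ is not surjective, a contradiction. Hence $Q_{\mathcal{S}_\Lambda}$ has no oriented cycles.

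With $Q_{\mathcal{S}_\Lambda}$ finite and acyclic there are only finitely many paths in $Q_{\mathcal{S}_\Lambda}$, so $T(\mathcal{S}_\Lambda) = \bigoplus_p\big({_jM_{p(n_p-1)}} \otimes_{D_{p(n_p-1)}} \cdots \otimes_{D_{p(1)}} {_{p(1)}M_i}\big)$ is a finite direct sum of finite-dimensional spaces, hence finite dimensional over $k$; it is clearly basic, and it is hereditary. The latter is well known for species tensor algebras, for instance because the canonical exact sequence of $T(\mathcal{S}_\Lambda)$-bimodules $0 \to T(\mathcal{S}_\Lambda) \otimes_D M \otimes_D T(\mathcal{S}_\Lambda) \to T(\mathcal{S}_\Lambda) \otimes_D T(\mathcal{S}_\Lambda) \to T(\mathcal{S}_\Lambda) \to 0$ bounds the projective dimension of every module by $1$; cf.\ \cite{DR1}. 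Now $\langle \rho \rangle$ is an ideal of $T(\mathcal{S}_\Lambda)$ contained in $J^2 = \rad^2 T(\mathcal{S}_\Lambda)$, so if $\langle \rho \rangle \neq 0$ then by Lemma~\ref{lemma:ideal_in_square_radical} applied to the finite dimensional basic hereditary algebra $T(\mathcal{S}_\Lambda)$ the quotient $T(\mathcal{S}_\Lambda)/\langle \rho \rangle \simeq \Lambda$ would fail to be hereditary, contrary to hypothesis. Therefore $\langle \rho \rangle = 0$ and $\Lambda \simeq T(\mathcal{S}_\Lambda)$.

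I expect the delicate point to be the oriented-cycle step: one must keep careful track of the dictionary between the bimodules ${_jM_i}$ of $\mathcal{S}_\Lambda$, their incarnations $e_j(\mathfrak{r}/\mathfrak{r}^2)e_i$ inside $\Lambda$, and the morphism spaces between the indecomposable projectives $P_j$, and one must check both that the composite $g$ around a cycle is nonzero (so that Lemma~\ref{lemma:map_between_proj_is_mono} applies) and that its image lies in $\rad P_i$. The other ingredient worth isolating cleanly is that $T(\mathcal{S}_\Lambda)$ is hereditary, since that is exactly what lets Lemma~\ref{lemma:ideal_in_square_radical} finish the argument; everything else is a direct appeal to Proposition~\ref{prop:p1p2_ring_is_tensor_ring_of_species}.
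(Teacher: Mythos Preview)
Your proof is correct and uses exactly the same three ingredients as the paper (Proposition~\ref{prop:p1p2_ring_is_tensor_ring_of_species}, Lemma~\ref{lemma:ideal_in_square_radical}, and Lemma~\ref{lemma:map_between_proj_is_mono}), but you arrange them in a different order. The paper first invokes Lemma~\ref{lemma:ideal_in_square_radical} to conclude $\langle\rho\rangle=0$ and hence $\Lambda\simeq T(\mathcal{S}_\Lambda)$, and only afterwards argues finiteness and, via Lemma~\ref{lemma:map_between_proj_is_mono}, the absence of oriented cycles. You instead rule out oriented cycles first, working directly inside $\Lambda$ with the projectives $\Lambda e_i$ and Lemma~\ref{lemma:map_between_proj_is_mono}, and only then observe that $T(\mathcal{S}_\Lambda)$ is finite dimensional, basic, and hereditary so that Lemma~\ref{lemma:ideal_in_square_radical} can be applied to it. Your ordering is logically tidier: as stated, Lemma~\ref{lemma:ideal_in_square_radical} requires the ambient algebra to be basic, finite dimensional, and hereditary, and the paper is tacit about why $T(\mathcal{S}_\Lambda)$ enjoys these properties at the moment the lemma is invoked, whereas you verify them beforehand (your appeal to the standard bimodule resolution plays the role of the paper's Lemma~\ref{lemma:species_without_relations_hereditary}). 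The underlying argument is otherwise the same.
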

\begin{proof}
We know from Proposition
\ref{prop:p1p2_ring_is_tensor_ring_of_species} that $\Lambda \simeq
T(\mathcal{S}_\Lambda)/\langle \rho \rangle$ with $\langle \rho
\rangle \subset J^2$.  Since $\Lambda$ is hereditary, Lemma
\ref{lemma:ideal_in_square_radical} implies that $\langle \rho
\rangle = (0)$, hence $\Lambda \simeq T(\mathcal{S}_\Lambda)$. Since
$\Lambda$ is finite dimensional, the underlying quiver
$Q_{\mathcal{S}_\Lambda}$ of $\mathcal{S}_\Lambda$ must be finite.

Assume there is an oriented cycle in $Q_{\mathcal{S}_\Lambda}$. Using Lemma \ref{lemma:map_between_proj_is_mono}, this
will give rise to a proper monomorphism from an indecomposable
projective $P$ into itself, which contradicts that the algebra
$\Lambda$ is finite dimensional.
\end{proof}

From the proposition above we see that a basic finite dimensional
hereditary $\mathfrak{r}$-split $k$-algebra is a tensor algebra. Let
us prove the opposite direction.

\begin{lemma}\label{lemma:species_without_relations_hereditary}
Let $\mathcal{S}$ be a $k$-species with underlying quiver
$Q_\mathcal{S}$. If $Q_\mathcal{S}$ is a finite quiver without
oriented cycles, then $T(\mathcal{S})$ is a hereditary finite
dimensional basic $k$-algebra.
\end{lemma}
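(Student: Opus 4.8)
The plan is to establish three things about $\Lambda = T(\mathcal{S})$: that it is finite dimensional over $k$, that it is basic, and that it is hereditary. Finite dimensionality is the easiest: since $Q_\mathcal{S}$ is finite and has no oriented cycles, there are only finitely many paths $p$ in $Q_\mathcal{S}$, and the number of such paths has a bound on its length equal to $|I|-1$ (a simple path cannot revisit a vertex). Hence $M^{(n)} = (0)$ for $n \geq |I|$, where $M = \oplus_{i,j} {_j M_i}$. Since each ${_j M_i}$ is finite dimensional over $k$ and $D = \oplus_i D_i$ is finite dimensional over $k$, every $M^{(n)}$ is finite dimensional over $k$, so $T(\mathcal{S}) = \oplus_{n \geq 0} M^{(n)}$ is a finite sum of finite dimensional $k$-spaces, hence finite dimensional. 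In particular $T(\mathcal{S})$ is artinian, and $J = \oplus_{n \geq 1} M^{(n)}$ is a nilpotent ideal with $T(\mathcal{S})/J \simeq D = \oplus_i D_i$ semisimple, so $J = \rad T(\mathcal{S})$. Since $\Lambda/\rad\Lambda \simeq \oplus_i D_i$ with the $D_i$ division rings, $\Lambda$ is basic (the indecomposable projectives $\Lambda D_i$ are pairwise non-isomorphic because they have non-isomorphic tops).

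The main content is heredity. Here I would use the standard criterion: a finite dimensional algebra $\Lambda$ is hereditary if and only if $\rad\Lambda$ is projective as a left $\Lambda$-module, equivalently $\pd_\Lambda(\Lambda/\rad\Lambda) \leq 1$. So it suffices to show that $J = \rad T(\mathcal{S})$ is projective as a left $T(\mathcal{S})$-module. The key observation is that there is an isomorphism of left $T(\mathcal{S})$-modules
$$
J \;\simeq\; T(\mathcal{S}) \otimes_D M,
$$
where the right-hand side is viewed as a left $T(\mathcal{S})$-module via the left factor. Indeed, $T(\mathcal{S}) \otimes_D M = \left(\oplus_{n \geq 0} M^{(n)}\right) \otimes_D M \simeq \oplus_{n \geq 0} M^{(n+1)} = \oplus_{n \geq 1} M^{(n)} = J$, and one checks the left $T(\mathcal{S})$-module structures agree because multiplication in $T(\mathcal{S})$ is exactly concatenation of tensors over $D$. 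Now $T(\mathcal{S}) \otimes_D M$ is a projective left $T(\mathcal{S})$-module provided $M$ is a projective left $D$-module, since $T(\mathcal{S}) \otimes_D (-)$ sends projectives to projectives. But $D = \oplus_i D_i$ is semisimple, so every left $D$-module — in particular $M$ — is projective. Hence $J$ is projective, and therefore $T(\mathcal{S})$ is hereditary.

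The step I expect to require the most care is verifying that the isomorphism $J \simeq T(\mathcal{S}) \otimes_D M$ is genuinely one of left $T(\mathcal{S})$-modules, not merely of left $D$-modules or $k$-spaces; this amounts to unwinding the definition of the tensor algebra multiplication $M^{(n)} \times M^{(m)} \to M^{(n+m)}$ given in the Preliminaries and matching it against the action of $T(\mathcal{S})$ on $T(\mathcal{S}) \otimes_D M$ by left multiplication on the first factor. A cleaner alternative, avoiding any homological machinery, is to argue directly that $J$ is projective: the natural surjection $\mu \colon T(\mathcal{S}) \otimes_D M \twoheadrightarrow J$ given by multiplication is split by the inclusion, or more simply to write $J$ explicitly as a direct summand of a free module; but invoking the criterion $\pd_\Lambda(\Lambda/\rad\Lambda)\le 1$ together with semisimplicity of $D$ is the most economical route and I would present it that way.
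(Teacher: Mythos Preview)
Your proof is correct and follows essentially the same strategy as the paper: both arguments establish heredity by showing that $\rad T(\mathcal{S}) = J$ is projective as a left $T(\mathcal{S})$-module and then invoke the standard criterion (the paper cites \cite[Theorem~2.35]{La2}). The only difference is in packaging: the paper writes out the explicit decomposition $\rad T(\mathcal{S}) = \bigoplus_{i \in I} \bigl(\bigoplus_{{_j M_i} \neq 0} (\dim_{D_j} {_j M_i})\, P_j\bigr)$ into indecomposable projectives, whereas you obtain projectivity more abstractly via the isomorphism $J \simeq T(\mathcal{S}) \otimes_D M$ together with the semisimplicity of $D$. These are two ways of saying the same thing, since decomposing $M$ as a left $D$-module into copies of the $D_j$ and applying $T(\mathcal{S}) \otimes_D (-)$ recovers exactly the paper's formula. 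Your treatment of finite dimensionality and basicness is slightly more detailed than the paper's, which is fine.
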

\begin{proof}
We know that $T(\mathcal{S}) \simeq \oplus_{i \in I} T(\mathcal{S})
D_i \simeq \oplus_{i \in I} P_i$ is a decomposition of
$T(\mathcal{S})$ into a direct sum of indecomposable projectives,
and that $\rad T(\mathcal{S}) = \oplus_{i \in I} (\oplus_{{_j M_i}
\not= 0} (\dim_{D_j} {_j M_i})P_j)$. Hence the radical of
$T(\mathcal{S})$ is projective, which implies that $T(\mathcal{S})$
is hereditary \cite[Theorem 2.35]{La2}.

Since $Q_\mathcal{S}$ is finite and without oriented cycles,
$T(\mathcal{S})$ is finite dimensional.
\end{proof}

We summarize the previous two results in the following theorem.

\begin{theorem}\label{thm:fin_dim_alg_r-split_eq}
Let $\Lambda$ be a basic finite dimensional $\mathfrak{r}$-split
$k$-algebra, then the following are equivalent:
\begin{itemize}
\item[(i)] $\Lambda$ is hereditary
\item[(ii)] $\Lambda$ is isomorphic to a tensor algebra $T(\mathcal{S})$ of a
species $\mathcal{S}$ where the underlying quiver $Q_\mathcal{S}$ is
finite and without oriented cycles
\end{itemize}
\end{theorem}

We now proceed to show there is a large class of finite dimensional
basic $k$-algebras which are $\mathfrak{r}$-split, namely finite
dimensional basic $k$-algebras where $k$ is a perfect field.

\begin{proposition}\label{prop:perfect_gives_p1p2}
If $k$ is a perfect field and $\Lambda$ is a finite dimensional
basic $k$-algebra, then $\Lambda$ is $\mathfrak{r}$-split.
\end{proposition}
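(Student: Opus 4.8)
The plan is to establish $\mathfrak{r}$-splitness in two stages, matching the two defining conditions of an $\mathfrak{r}$-split algebra: first that $\Lambda$ splits (i.e.\ $\pi \colon \Lambda \to \Lambda/\mathfrak{r}$ admits a $k$-algebra section), and second that for every such section $\epsilon$ the short exact sequence $0 \to \mathfrak{r}^2 \to \mathfrak{r} \to \mathfrak{r}/\mathfrak{r}^2 \to 0$ splits as a sequence of $\Lambda/\mathfrak{r}$-bimodules. The key input is that a perfect field $k$ has the property that every finite extension field is separable, and more generally that finite-dimensional $k$-algebras have good cohomological vanishing properties; I will route everything through separability of the semisimple quotient $\Lambda/\mathfrak{r}$.

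For the first stage, I would argue that since $k$ is perfect, the semisimple $k$-algebra $\Lambda/\mathfrak{r} \simeq \oplus_{i \in I} D_i$ is \emph{separable} as a $k$-algebra: each $D_i$ is a finite-dimensional division algebra over $k$ with $k$ central, its centre $Z_i$ is a finite separable field extension of $k$ (perfectness), and $D_i$ is a central simple $Z_i$-algebra, hence separable over $Z_i$ and therefore over $k$. A separable $k$-algebra has vanishing Hochschild cohomology in positive degrees with arbitrary bimodule coefficients, so $H^n_R(\Lambda/\mathfrak{r}, M) = 0$ for all $n \geq 1$; the Wedderburn--Malcev theorem (as quoted in the Preliminaries, \cite[p.~209]{Pi}) then gives a $k$-algebra section $\epsilon$, i.e.\ $\Lambda$ splits. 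Alternatively one may cite the classical Wedderburn principal theorem for separable quotients directly.

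For the second stage, fix any $k$-algebra section $\epsilon$ and view $\mathfrak{r}/\mathfrak{r}^2$, $\mathfrak{r}$, $\mathfrak{r}^2$ as $\Lambda/\mathfrak{r}$-bimodules via $\epsilon$. Since $\Lambda/\mathfrak{r}$ is separable over $k$, the enveloping algebra $(\Lambda/\mathfrak{r}) \otimes_k (\Lambda/\mathfrak{r})^{\op}$ is again a semisimple $k$-algebra, so \emph{every} short exact sequence of $\Lambda/\mathfrak{r}$-bimodules splits (equivalently $\operatorname{Ext}^1$ over the enveloping algebra vanishes). In particular the sequence $0 \to \mathfrak{r}^2 \to \mathfrak{r} \to \mathfrak{r}/\mathfrak{r}^2 \to 0$ splits as a bimodule sequence. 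This holds for the given $\epsilon$, and since $\epsilon$ was arbitrary it holds for all such sections, which is exactly the $\mathfrak{r}$-split condition. Combining the two stages proves the proposition.

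The main obstacle is the separability claim for $\Lambda/\mathfrak{r}$: one must be careful that ``$k$ perfect'' passes through the noncommutative division rings $D_i$. The point to nail down is that for $k$ perfect the centre $Z(D_i)$ is a finite separable (hence, as $k$ is perfect, automatic) extension of $k$, and that a finite-dimensional central division algebra over a field is always separable over that field; hence $D_i$ is separable over $k$ by transitivity of separability for algebras. Once separability of $\Lambda/\mathfrak{r}$ is secured, both the existence of $\epsilon$ and the bimodule splitting follow formally from semisimplicity of the enveloping algebra, with no further computation.
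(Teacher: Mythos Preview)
Your proposal is correct and follows essentially the same approach as the paper: both arguments cite Wedderburn--Malcev for the splitting of $\pi$, and both deduce the bimodule splitting from semisimplicity of the enveloping algebra of $\Lambda/\mathfrak{r}$ over a perfect field. The only cosmetic difference is that you package the key step as ``$\Lambda/\mathfrak{r}$ is separable over $k$'', whereas the paper decomposes the bimodule category componentwise and checks directly that each $D_i \otimes_k D_j^{\op}$ is semisimple (citing \cite[Theorem 5.3.6]{DK}); these are two phrasings of the same fact.
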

\begin{proof}
When $k$ is perfect, $\Lambda$ is split \cite[Corollary 11.6]{Pi}.
We therefore only need to show that the sequence
$$0 \to \mathfrak{r}^2 \to \mathfrak{r} \to \mathfrak{r} /
\mathfrak{r}^2 \to 0$$ splits as a sequence of $\Lambda /
\mathfrak{r} - \Lambda / \mathfrak{r}$-bimodules via any morphism
$\epsilon \colon \Lambda / \mathfrak{r} \to \Lambda$ for which $\pi
\epsilon \simeq \id_{\Lambda / \mathfrak{r}}$ where $\pi \colon
\Lambda \to \Lambda / \mathfrak{r}$ is the natural projection
morphism. Using the decomposition $\Lambda / \mathfrak{r} \simeq
\oplus_{i \in I} D_i$, this is equivalent to showing that the
sequence splits as a sequence of $D_i-D_j$-bimodules for every pair
$i, j \in I$.

A $D_i-D_j$-bimodule $M$ is given by an operation $D_i \times M
\times D_j \to M$.  By duality we get an equivalent operation $\phi
\colon D_i \times D_j^{\op} \times M \to M$.  If $k$ acts centrally
on $M$ and $D_j$, we have $\phi(d_i r, d_j, m) = (d_i r) m d_j = d_i
m (r d_j) = d_i m (d_j r) = \phi (d_i, r d_j, m)$.  Hence the
operation $\phi$ is $k$-biadditive on $D_i \times D_j^{\op}$, so it
gives rise to an additive morphism $D_i \otimes_k D_j^{\op} \times M
\to M$. This way we can view $M$ as a $D_i \otimes_k
D_j^{\op}$-module. Since $k$ acts centrally on all the objects in
the short exact sequence and also on $D_i$ and $D_j$, the sequence
of $D_i-D_j$-bimodules can be viewed as a sequence of $D_i \otimes_k
D_j^{\op}$-modules.

Since $k$ is perfect and $D_j^{\op}$ is a finite extension of $k$,
we know that $D_i \otimes_k D_j^{\op}$ is a semisimple $k$-algebra
\cite[Theorem 5.3.6]{DK}. Therefore $D_i \otimes_k
D_j^{\op}$-modules are projective \cite[Theorem 4.13]{Ro}, so the
sequence splits as a sequence of $D_i \otimes_k D_j^{\op}$-modules,
hence it splits as a sequence of $D_i - D_j$-modules.
\end{proof}

\begin{corollary}\label{cor:closed_is_perfect}
If $k$ is an algebraically closed field, and $\Lambda$ is a finite
dimensional basic $k$-algebra, then $\Lambda$ is
$\mathfrak{r}$-split.
\end{corollary}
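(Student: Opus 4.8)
The plan is to derive this corollary directly from Proposition \ref{prop:perfect_gives_p1p2}, so the only thing to verify is that an algebraically closed field is perfect. First I would recall the definition: a field $k$ is perfect if either $\Char k = 0$, or $\Char k = p > 0$ and the Frobenius endomorphism $x \mapsto x^p$ is surjective on $k$. If $\Char k = 0$ there is nothing to check, so assume $\Char k = p > 0$. Given $a \in k$, the polynomial $X^p - a \in k[X]$ has a root $b$ in some algebraic closure of $k$; but $k$ is algebraically closed, so $b \in k$, and $b^p = a$. Hence Frobenius is surjective and $k$ is perfect.

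Having established that $k$ is perfect, the corollary is immediate: apply Proposition \ref{prop:perfect_gives_p1p2} to the finite dimensional basic $k$-algebra $\Lambda$ to conclude that $\Lambda$ is $\mathfrak{r}$-split. There is essentially no obstacle here; the content lies entirely in Proposition \ref{prop:perfect_gives_p1p2}, and this corollary simply records the well-known fact that algebraically closed fields form a subclass of perfect fields. The only point requiring a word of care is that one must invoke that every element of $k$ has a $p$-th root inside $k$ (not merely in an extension), which is exactly where algebraic closedness is used; an alternative phrasing is that $X^p - a$ cannot be irreducible over an algebraically closed field since that field has no proper algebraic extensions, forcing $X^p - a = (X - b)^p$ for some $b \in k$.

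I would therefore write the proof in two short sentences: first that $k$ algebraically closed implies $k$ perfect (with the one-line Frobenius-surjectivity argument in positive characteristic), and then that Proposition \ref{prop:perfect_gives_p1p2} applies verbatim to give the conclusion that $\Lambda$ is $\mathfrak{r}$-split.
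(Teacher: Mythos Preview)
Your proposal is correct and follows exactly the paper's own approach: the paper simply observes that an algebraically closed field is perfect and then invokes Proposition~\ref{prop:perfect_gives_p1p2}. Your added justification of Frobenius surjectivity is more detail than the paper gives, but the strategy is identical.
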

\begin{proof}
A field that is algebraically closed is in particular perfect, so
this is a direct consequence of Lemma \ref{prop:perfect_gives_p1p2}.
\end{proof}

We will now summarize what we know about basic finite dimensional
algebras over perfect fields.

\begin{theorem}\label{theorem:all_for_P1_P2}
Let $\Lambda$ be a finite dimensional basic $k$-algebra where $k$ is
a perfect field, let $\mathcal{S}_\Lambda$ be the species of
$\Lambda$, and let $Q_{\mathcal{S}_\Lambda}$ be the underlying
quiver of $\mathcal{S}_\Lambda$.  Then the following holds:
\begin{itemize}
\sloppy \item[(a)] Let $\{ r_1, r_2, \dots, r_m \}$ be elements in
$\mathfrak{r}$ such that their images $\{ \bar{r}_1, \bar{r}_2,
\dots, \bar{r}_m \}$ in $\mathfrak{r} / \mathfrak{r}^2$ generate
$\mathfrak{r} / \mathfrak{r}^2$ as a $\Lambda / \mathfrak{r} -
\Lambda / \mathfrak{r}$-bimodule.  Then $\{ D_1, D_2,  \dots, D_n,
r_1, r_2, \dots, r_m \}$ generate $\Lambda$ as a $k$-algebra, where
$\Lambda / \mathfrak{r} \simeq \oplus_{i=1}^n D_i$.
\item[(b)] There is a surjective $k$-algebra homomorphism $\tilde{f} \colon
T (\Lambda / \mathfrak{r}, \mathfrak{r} / \mathfrak{r}^2) \to
\Lambda$ such that $\oplus_{j \geq \rl (\Lambda)} (\mathfrak{r} /
\mathfrak{r}^2)^{(j)} \subset \ker \tilde{f} \subset \oplus_{j \geq
2} (\mathfrak{r} / \mathfrak{r}^2)^{(j)}$.
\item[(c)] $\Lambda \simeq T(\mathcal{S}_\Lambda)/\langle
\rho \rangle$ with $J^{\rl (\Lambda)} \subset \langle \rho \rangle
\subset J^2$, where $\rho$ is a set of relations on
$\mathcal{S}_\Lambda$.
\item[(d)] The category $\mod \Lambda$ is equivalent to
a category $\rep (\mathcal{S}_\Lambda, \rho)$, where $\rho$ is a set
of relations on $\mathcal{S}_\Lambda$ such that $J^{\rl (\Lambda)}
\subset \langle \rho \rangle \subset J^2$.
\item[(e)] $\Lambda$ is hereditary if and only if $Q_{\mathcal{S}_\Lambda}$ is finite without oriented
cycles and $\Lambda$ is isomorphic to $T(\mathcal{S}_\Lambda)$.
\end{itemize}
\end{theorem}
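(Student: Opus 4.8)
The plan is to recognize that Theorem \ref{theorem:all_for_P1_P2} is a compilation: each of the five statements (a)--(e) has already been established under the weaker hypothesis that $\Lambda$ is $\mathfrak{r}$-split, so the only genuinely new ingredient required here is that a finite dimensional basic $k$-algebra over a \emph{perfect} field $k$ is automatically $\mathfrak{r}$-split. That is precisely Proposition \ref{prop:perfect_gives_p1p2}. Hence the first (and essentially only) step is to invoke Proposition \ref{prop:perfect_gives_p1p2} to conclude that $\Lambda$ is $\mathfrak{r}$-split; in particular $\Lambda$ is split, so a $k$-algebra homomorphism $\epsilon \colon \Lambda/\mathfrak{r} \to \Lambda$ with $\pi\epsilon = \id_{\Lambda/\mathfrak{r}}$ exists, and we fix one such $\epsilon$, thereby viewing $\Lambda/\mathfrak{r} \simeq \oplus_{i=1}^n D_i$ as a $k$-subalgebra of $\Lambda$. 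From this point on the argument is bookkeeping.

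With $\Lambda$ known to be $\mathfrak{r}$-split, I would read off each part from the corresponding earlier result. Part (a) is the statement of Proposition \ref{prop:p1p2_tensor_alg_onto}(a) verbatim (with the chosen $\epsilon$), and part (b) is Proposition \ref{prop:p1p2_tensor_alg_onto}(b). Part (c) is Proposition \ref{prop:p1p2_ring_is_tensor_ring_of_species}, whose proof produces exactly an isomorphism $\Lambda \simeq T(\mathcal{S}_\Lambda)/\langle\rho\rangle$ with $J^{\rl(\Lambda)} \subset \langle\rho\rangle \subset J^2$ for a set $\rho$ of relations. Part (d) then follows by combining part (c) with Corollary \ref{cor:species_with_rel_is_tensor_with_ideal}, which turns the presentation $\Lambda \simeq T(\mathcal{S}_\Lambda)/\langle\rho\rangle$ into the equivalence $\mod\Lambda \simeq \rep(\mathcal{S}_\Lambda,\rho)$ (this is exactly the unnamed corollary stated immediately after Proposition \ref{prop:p1p2_ring_is_tensor_ring_of_species}). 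Finally, part (e) is Theorem \ref{thm:fin_dim_alg_r-split_eq}, applied to the now-established fact that $\Lambda$ is $\mathfrak{r}$-split.

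Since every piece is already available, there is no real obstacle; the only point to handle with a little care is the purely cosmetic discrepancy between the formulations. In Proposition \ref{prop:p1p2_tensor_alg_onto} and Proposition \ref{prop:p1p2_ring_is_tensor_ring_of_species} the decomposition $\Lambda/\mathfrak{r} \simeq \oplus_i D_i$ is explicitly realized inside $\Lambda$ via a chosen splitting $\epsilon$, whereas in Theorem \ref{theorem:all_for_P1_P2} this is left implicit; I would simply note that the existence of $\epsilon$ is part of being split, which holds over a perfect field by \cite[Corollary 11.6]{Pi} (and is already subsumed in Proposition \ref{prop:perfect_gives_p1p2}). With that identification fixed once and for all, statements (a)--(e) are immediate transcriptions of the cited results, and the proof is complete.
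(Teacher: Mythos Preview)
Your proposal is correct and matches the paper's approach exactly: the paper presents this theorem explicitly as a summary (``We will now summarize what we know\ldots'') with no separate proof, relying on Proposition~\ref{prop:perfect_gives_p1p2} to reduce to the $\mathfrak{r}$-split case and then reading off (a)--(e) from Proposition~\ref{prop:p1p2_tensor_alg_onto}, Proposition~\ref{prop:p1p2_ring_is_tensor_ring_of_species}, its corollary, and Theorem~\ref{thm:fin_dim_alg_r-split_eq}, just as you describe.
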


\begin{example}
As we know, not all finite dimensional algebras are split, and we
will now give an example of such an algebra built on an example in
\cite[p.\ 99]{Ben}. Let $\mathbb{F}_2$ be the Galois field
consisting of two elements, and let $k = \mathbb{F}_2 (x) = \{
\frac{f}{g} \mid f, g \in \mathbb{F}_2[x], g \not= 0 \}$ be the
field of rational functions over $\mathbb{F}_2$ with indeterminate
$x$. Observe that $k$ is not a perfect field.  We want to
investigate the ring $\Lambda = k[y,z] /(z^2, y^2 - x- z)$. Note
that this is a finite dimensional algebra over $k$, actually $\dim_k
\Lambda = 4$, where $\{1, y, z, yz \}$ is a basis for $\Lambda$ as a
$k$-algebra. The Jacobson radical $\mathfrak{r}$ of $\Lambda$ is the
ideal $\langle z \rangle$. This shows $\Lambda / \mathfrak{r} \simeq
k[y] / (y^2 - x) \simeq \mathbb{F}_2 (t)$ where in the last ring we
have $t^2 = x$. Now $\mathbb{F}_2 (t)$ is a finite field extension
of $k$, and $\dim_k \mathbb{F}_2 (t) = 2$ when we view $\mathbb{F}_2
(t)$ as a $k$-algebra.

We want to check if $\Lambda$ splits, hence we want to try to
construct a $k$-algebra homomorphism $\epsilon \colon \Lambda /
\mathfrak{r} \to \Lambda$. Since $\Lambda / \mathfrak{r}$ as a
$k$-algebra is generated by $\{1, y\}$, we only need to define
$\epsilon$ on the element $y$.  To get a morphism we need $x =
\epsilon(x) = \epsilon(y^2) = \epsilon(y) \epsilon(y)$. Since there
are no solutions to the equation $u^2 - x$ for an indeterminate $u$
in the ring $\Lambda$, this is impossible. Hence there are no
$k$-algebra homomorphisms $\epsilon \colon \Lambda / \mathfrak{r}
\to \Lambda$, so $\Lambda$ does not split. \exend
\end{example}

\section{Basic Finite Dimensional Split Algebras}

In this section we want to give a structure theorem for finite
dimensional basic split $k$-algebras.  The proof of this result will
have many similarities with the proof for the structure theorem for
finite dimensional basic $\mathfrak{r}$-split $k$-algebras
(Proposition \ref{prop:p1p2_ring_is_tensor_ring_of_species}), but
there are at the same time important differences. Even though
$\mathfrak{r}$-split algebras in particular are split algebras, the
structure theorem we are about to give for split algebras is not a
generalization of the structure theorem for $\mathfrak{r}$-split
algebras.  On the other hand, it is a generalization of Gabriel's
structure theorem for finite dimensional basic $k$-algebras where
$k$ is algebraically closed.

In contrast to the case when $\Lambda$ was $\mathfrak{r}$-split, it
is in general no longer true that the sequence
$$0 \to \mathfrak{r}^2 \to \mathfrak{r} \to \mathfrak{r} /
\mathfrak{r}^2 \to 0$$ splits when viewed as a sequence of $\Lambda
/ \mathfrak{r} - \Lambda / \mathfrak{r}$-bimodules via any
$k$-algebra homomorphism $\epsilon \colon \Lambda / \mathfrak{r} \to
\Lambda$ such that $\pi \epsilon \simeq \id_{\Lambda /
\mathfrak{r}}$ for the natural projection $\pi \colon \Lambda \to
\Lambda / \mathfrak{r}$ (we know that such a $k$-algebra
homomorphism $\epsilon$ exists since $\Lambda$ is assumed to be
split). The splitting of the above sequence is used in the
construction of a tensor algebra mapping onto $\Lambda$. We will use
a similar construction in this section, but we will only need a free
$\Lambda / \mathfrak{r} - \Lambda / \mathfrak{r}$-bimodule $F$ such
that $F$ maps onto $\mathfrak{r} / \mathfrak{r}^2$.  Using that $F$
is free we get a lifting as shown in the following commutative
diagram.
$$\xymatrix{ &&& F \ar@{->>}[d] \ar@{.>}[dl] \\ 0 \ar[r] & \mathfrak{r}^2 \ar[r] & \mathfrak{r} \ar[r] &
\mathfrak{r} / \mathfrak{r}^2 \ar[r] & 0}$$ We will replace the
splitting morphism $\mathfrak{r} / \mathfrak{r}^2 \to \mathfrak{r}$
with the morphism $F \to \mathfrak{r}$ given by this lifting, we can
use an argument similar to the one in the $\mathfrak{r}$-split case
to construct a tensor algebra mapping onto $\Lambda$.

One natural choice for a free $\Lambda / \mathfrak{r} - \Lambda /
\mathfrak{r}$-bimodule mapping onto $\mathfrak{r} / \mathfrak{r}^2$
could have been $\Lambda / \mathfrak{r} \otimes_k \mathfrak{r} /
\mathfrak{r}^2 \otimes_k \Lambda / \mathfrak{r}$, but this module
will turn out to be unnecessarily large. Since we have $\Lambda /
\mathfrak{r} \simeq \oplus D_i$ we only need to find a free
$D_j-D_i$-module ${_j F_i}$ mapping onto $D_j (\mathfrak{r} /
\mathfrak{r}^2) D_i = {_j M_i}$ for all $i, j \in I$.  Using that
${_j F_i}$ is free, we will get a lifting resulting in the follwing
commutative diagram of $D_j-D_i$-modules.
$$\xymatrix{ &&& {_j F_i} \ar@{->>}[d] \ar@{.>}[dl]
\\ 0 \ar[r] & D_j (\mathfrak{r}^2) D_i \ar[r] & D_j (\mathfrak{r}) D_i \ar[r] &
D_j (\mathfrak{r} / \mathfrak{r}^2) D_i \ar[r] & 0}$$ A natural
choice for such a module is ${_j \tilde{M}_i} = D_j \otimes_k D_j
(\mathfrak{r} / \mathfrak{r}^2) D_i \otimes_k D_i = D_j \otimes_k
({_j M_i}) \otimes_k D_i$, where $\mathfrak{r} / \mathfrak{r}^2
\simeq \oplus_{i,j \in I} ({_j M_i})$.  An onto $D_j-D_i$-module
morphism ${_j g_i} \colon {_j \tilde{M}_i} \to {_j M_i}$ is given by
${_j g_i}(d_j \otimes {_j m_i} \otimes d_i) = d_j ({_j m_i}) d_i$.
Taking the direct sum we get the $\Lambda / \mathfrak{r} - \Lambda /
\mathfrak{r}$-bimodule $\tilde{M} = \oplus_{i,j \in I} ({_j
\tilde{M}_i})$.  Let $g \colon \tilde{M} \to \mathfrak{r} /
\mathfrak{r}^2$ be given by $g = \Sigma_{i,j \in I} ({_j g_i})$
where we extend the functions ${_j g_i}$ to $\tilde{M}$ by letting
${_j g_i} \mid_{{_l \tilde{M}_k}} = 0$ for $k \not= i$ and $l \not=
j$ in $I$. Then $g$ maps onto $\mathfrak{r} / \mathfrak{r}^2$ and
factors through $\mathfrak{r}$, as shown in the diagram below.
$$\xymatrix{ &&& \tilde{M} \ar@{->>}[d] \ar@{.>}[dl] \\ 0 \ar[r] & \mathfrak{r}^2 \ar[r] & \mathfrak{r} \ar[r] &
\mathfrak{r} / \mathfrak{r}^2 \ar[r] & 0}$$

We define the \emph{enlarged species of
$\Lambda$}\index{species!enlarged species of
$\Lambda$}\index{enlarged!species of $\Lambda$} to be the species
$\tilde{\mathcal{S}}_\Lambda = (D_i, {_j \tilde{M}_i})_{i,j \in I}$.
Since $\tilde{M}$ is a $\Lambda / \mathfrak{r}$-bimodule we are able
to define the \emph{enlarged tensor algebra of
$\Lambda$}\index{enlarged!tensor algebra of $\Lambda$}\index{tensor
algebra!enlarged tensor algebra of $\Lambda$} as $\tilde{T} (\Lambda
/ \mathfrak{r}, \mathfrak{r} / \mathfrak{r}^2) = T(\Lambda /
\mathfrak{r}, \tilde{M})$.  The tensor algebra of the species
$\tilde{\mathcal{S}}_\Lambda$ will then be the same as the enlarged
tensor algebra of $\Lambda$ since $T(\tilde{\mathcal{S}}_\Lambda) =
T(\oplus_{i\in I} D_i, \oplus_{i,j \in I} ({_j \tilde{M}_i})) =
T(\Lambda / \mathfrak{r}, \tilde{M}) = \tilde{T} (\Lambda /
\mathfrak{r}, \mathfrak{r} / \mathfrak{r}^2)$. The $n$-fold tensor
product $\tilde{M}^{(n)}$ can be simplified the following way:
\begin{align*} & \tilde{M}^{(n)} = \tilde{M}
\otimes_{\Lambda/\mathfrak{r}} \cdots \otimes_{\Lambda/\mathfrak{r}}
\tilde{M} \\ & = (\oplus_{i,j \in I} D_i \otimes_k {_i M_j}
\otimes_k D_j) \otimes_{\Lambda/\mathfrak{r}} \cdots
\otimes_{\Lambda/\mathfrak{r}} (\oplus_{i,j \in I} D_i \otimes_k {_i
M_j} \otimes_k D_j) \\ &\simeq \oplus_{i_0, i_1, \dots, i_n \in I}
(D_{i_0} \otimes_k {_{i_0} M_{i_1}} \otimes_k D_{i_1} \otimes_k
{_{i_1} M_{i_2}} \otimes_k \cdots \otimes_k D_{i_{n-1}} \otimes_k
{_{i_{n-1}} M_{i_n}} \otimes_k D_{i_n})
\end{align*}

If $k$ is an algebraically closed field then $D_i \simeq k$ for all
$i \in I$, hence $$\tilde{M} = \oplus_{i,j \in I} D_j \otimes_k {_j
M_i} \otimes_k D_i \simeq \oplus_{i,j \in I} k \otimes_k {_j M_i}
\otimes_k k \simeq \oplus_{i,j \in I} ({_j M_i}) = M =
\mathfrak{r}/\mathfrak{r}^2$$ Hence in the algebraically closed case
$\tilde{\mathcal{S}}_\Lambda = \mathcal{S}_\Lambda$ and $\tilde{T}
(\Lambda / \mathfrak{r}, \mathfrak{r} / \mathfrak{r}^2) = T(\Lambda
/ \mathfrak{r}, \mathfrak{r} / \mathfrak{r}^2)$.

The next proposition is similar to Proposition
\ref{prop:p1p2_tensor_alg_onto}.

\begin{proposition}\label{prop:tensor_algebra_surjective}
Assume that $\Lambda$ is a finite dimensional basic split
$k$-algebra.
\begin{itemize}
\sloppy \item[(a)] Let $\{ r_1, r_2, \dots, r_m \}$ be elements in
$\mathfrak{r}$ such that their images $\{ \bar{r}_1, \bar{r}_2,
\dots, \bar{r}_m \}$ in $\mathfrak{r} / \mathfrak{r}^2$ generate
$\mathfrak{r} / \mathfrak{r}^2$ as a $\Lambda / \mathfrak{r} -
\Lambda / \mathfrak{r}$-bimodule.  Then $\{ D_1, D_2,  \dots, D_n,
r_1, r_2, \dots, r_m \}$ generate $\Lambda$ as a $k$-algebra, where
$\Lambda / \mathfrak{r} \simeq \oplus_{i=1}^n D_i$.
\item[(b)] There is a surjective $k$-algebra homomorphism $\tilde{f} \colon
\tilde{T} (\Lambda / \mathfrak{r}, \mathfrak{r} / \mathfrak{r}^2)
\to \Lambda$ such that $\oplus_{j \geq \rl (\Lambda)}
\tilde{M}^{(j)} \subset \ker \tilde{f} \subset \oplus_{j \geq 1}
\tilde{M}^{(j)}$.
\end{itemize}
\end{proposition}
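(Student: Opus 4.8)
The plan is to mirror the proof of Proposition \ref{prop:p1p2_tensor_alg_onto}, with the splitting morphism $\mathfrak{r}/\mathfrak{r}^2 \to \mathfrak{r}$ replaced by the lifting $\tilde{M} \to \mathfrak{r}$ coming from the freeness of the $D_j$-$D_i$-bimodules ${_j\tilde{M}_i}$. For part (a), the argument is identical to part (a) of Proposition \ref{prop:p1p2_tensor_alg_onto}: that proof only used the split hypothesis (to view $\Lambda/\mathfrak{r}\simeq\oplus_i D_i$ as a $k$-subalgebra of $\Lambda$) and an induction on $\rl(\Lambda)$ using that $\mathfrak{r}^{\ell-1}/\mathfrak{r}^\ell$ descends to a subquotient of the subalgebra $A$ generated by $\{D_1,\dots,D_n,r_1,\dots,r_m\}$; none of it used $\mathfrak{r}$-splitness. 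So I would simply say part (a) follows verbatim from the proof of Proposition \ref{prop:p1p2_tensor_alg_onto}(a).

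For part (b), first I would fix a $k$-algebra homomorphism $\epsilon\colon\Lambda/\mathfrak{r}\to\Lambda$ with $\pi\epsilon=\id_{\Lambda/\mathfrak{r}}$ (which exists since $\Lambda$ is split), and use $\epsilon$ to view $\Lambda/\mathfrak{r}\simeq\oplus_{i\in I}D_i$ as a $k$-subalgebra of $\Lambda$, making $\mathfrak{r}$ a $\Lambda/\mathfrak{r}$-bimodule. Since each ${_j\tilde{M}_i}=D_j\otimes_k({_jM_i})\otimes_kD_i$ is a free $D_j$-$D_i$-bimodule and the composite ${_j g_i}\colon{_j\tilde{M}_i}\to{_jM_i}=D_j(\mathfrak{r}/\mathfrak{r}^2)D_i$ is surjective, freeness gives a $D_j$-$D_i$-bimodule lift ${_j h_i}\colon{_j\tilde{M}_i}\to D_j\mathfrak{r}D_i$ with (projection)$\circ{_j h_i}={_j g_i}$; assembling over $i,j$ yields a $\Lambda/\mathfrak{r}$-bimodule morphism $h\colon\tilde{M}\to\mathfrak{r}\hookrightarrow\Lambda$ lifting $g$, as in the last commutative diagram above. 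Now apply Lemma \ref{lemma:old_tensor_map} with $\Sigma=\Lambda/\mathfrak{r}$, $V=\tilde{M}$, and the map $f\colon\Lambda/\mathfrak{r}\oplus\tilde{M}\to\Lambda$ given by $\epsilon$ on $\Lambda/\mathfrak{r}$ and $h$ on $\tilde{M}$: this is a $k$-algebra homomorphism on $\Lambda/\mathfrak{r}$ and a $\Lambda/\mathfrak{r}$-bimodule morphism on $\tilde{M}$, so it induces a $k$-algebra homomorphism $\tilde{f}\colon\tilde{T}(\Lambda/\mathfrak{r},\mathfrak{r}/\mathfrak{r}^2)=T(\Lambda/\mathfrak{r},\tilde{M})\to\Lambda$ extending $f$.

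It remains to check surjectivity and the bounds on $\ker\tilde{f}$. Surjectivity: the image of $\tilde f$ contains $\epsilon(\Lambda/\mathfrak{r})\supseteq\{D_1,\dots,D_n\}$ and contains $h(\tilde{M})$, whose image under $\pi\colon\mathfrak{r}\to\mathfrak{r}/\mathfrak{r}^2$ is all of $g(\tilde M)=\mathfrak{r}/\mathfrak{r}^2$; hence the image contains elements $r_1,\dots,r_m\in\mathfrak{r}$ whose residues generate $\mathfrak{r}/\mathfrak{r}^2$, and by part (a) these together with the $D_i$ generate $\Lambda$, so $\tilde f$ is onto. Upper bound: since $\tilde{f}(\tilde{M}^{(j)})=h(\tilde M)\cdots h(\tilde M)\subseteq\mathfrak{r}^j$ and $\mathfrak{r}^{\rl(\Lambda)}=(0)$, we get $\oplus_{j\geq\rl(\Lambda)}\tilde{M}^{(j)}\subseteq\ker\tilde f$. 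Lower bound: simply $\ker\tilde f\subseteq\oplus_{j\geq 1}\tilde M^{(j)}=J$, because $\tilde f\mid_{\Lambda/\mathfrak{r}}=\epsilon$ is injective and $\tilde f(\Lambda/\mathfrak{r})=\epsilon(\Lambda/\mathfrak{r})$ meets the image of $\oplus_{j\geq1}\tilde M^{(j)}$ only... — here is the one genuine difference from Proposition \ref{prop:p1p2_tensor_alg_onto}(b): since ${_j h_i}$ need not be \emph{injective} (it only lifts a surjection out of a free module, and the free module $\tilde M$ is in general strictly larger than $\mathfrak{r}/\mathfrak{r}^2$), we can no longer conclude $\ker\tilde f\subseteq\oplus_{j\geq2}\tilde M^{(j)}$, only $\ker\tilde f\subseteq\oplus_{j\geq1}\tilde M^{(j)}$; and indeed the statement only claims the weaker inclusion. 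The one point to be careful about is exactly this weakening — the kernel now genuinely reaches into degree $1$, which is precisely the phenomenon (non-admissible ideal, $\Lambda/\mathfrak{r}$ now properly smaller than $\tilde T/J$) the paper is built to exploit — so I would phrase the degree-$1$ containment as the trivial inclusion $\ker\tilde f\subseteq J$ and emphasize that, unlike in the $\mathfrak{r}$-split case, no stronger bound holds. The only mild obstacle in the whole argument is verifying that $h$ really is a $\Lambda/\mathfrak{r}$-bimodule morphism into $\Lambda$ (i.e.\ that the bimodule structures on $\tilde M$ and on $\mathfrak{r}$ via $\epsilon$ are compatible with the block decomposition), which is routine once one notes ${_j\tilde M_i}D_l=(0)=D_l({_j\tilde M_i})$ for $l\neq i$, respectively $l\neq j$.
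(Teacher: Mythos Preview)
Your proof is correct and follows the paper's argument essentially verbatim: fix $\epsilon$, use freeness of the ${_j\tilde M_i}$ to lift $g$ through $\mathfrak{r}$ to a $\Lambda/\mathfrak{r}$-bimodule map $h\colon\tilde M\to\mathfrak{r}$, apply Lemma~\ref{lemma:old_tensor_map} to obtain $\tilde f$, invoke part~(a) for surjectivity, and read off the kernel bounds from $\tilde f(\tilde M^{(j)})\subset\mathfrak{r}^j$ together with $\epsilon(\Lambda/\mathfrak{r})\cap\mathfrak{r}=(0)$. One small expository point: the reason $\ker\tilde f$ can meet degree~$1$ is not only that $h$ may fail to be injective, but also that even an injective $h$ need not land in a complement of $\mathfrak{r}^2$; neither affects your proof, which only claims the containment $\ker\tilde f\subset\oplus_{j\ge1}\tilde M^{(j)}$.
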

\begin{proof}
\begin{itemize}
\item[(a)] Similar to Proposition \ref{prop:p1p2_tensor_alg_onto}.

\item[(b)]  Fix a map $\epsilon \colon \Lambda/ \mathfrak{r} \to \Lambda$ such that $\pi \epsilon \simeq \id_D$ for the
natural projection $\pi \colon \Lambda \to \Lambda /\mathfrak{r}$, and let $g$ be as described above this proposition. From the
earlier discussion $g$ lifts to a composition $ph$ where $h \colon
\tilde{M} \to \mathfrak{r}$ is a $\Lambda/ \mathfrak{r}-\Lambda/ \mathfrak{r}$-bimodule morphism and $p$ is
the natural projection $\mathfrak{r} \twoheadrightarrow M$ viewed as
a $\Lambda/ \mathfrak{r}-\Lambda/ \mathfrak{r}$-bimodule morphism via the fixed lifting $\epsilon$.

Construct the morphism $f \colon \Lambda/ \mathfrak{r} \oplus \tilde{M} \to \Lambda$ by
letting $f \mid_{\Lambda/ \mathfrak{r}} = \epsilon$ and $f \mid_{\tilde{M}} = ih$ where
$i$ is the inclusion $\mathfrak{r} \hookrightarrow \Lambda$ viewed
as a $\Lambda/ \mathfrak{r}-\Lambda/ \mathfrak{r}$-bimodule morphism via the lifting $\epsilon$.  Then,
since $\epsilon$ is a $k$-algebra homomorphism and $ih$ is a
$\Lambda/ \mathfrak{r}-\Lambda/ \mathfrak{r}$-bimodule morphism via $\epsilon$, we can use Lemma
\ref{lemma:old_tensor_map} to find a $k$-algebra homomorphism
$\tilde{f} \colon \tilde{T}(\Lambda / \mathfrak{r}, \mathfrak{r} /
\mathfrak{r}^2) \simeq T(\Lambda/ \mathfrak{r}, \tilde{M}) \to \Lambda$.  By part (a)
this morphism is surjective.

Since $\tilde{f} \mid_{\Lambda / \mathfrak{r}} = f\mid_{\Lambda /
\mathfrak{r}} = \epsilon$ is a monomorphism the image intersects
trivially with $\mathfrak{r}$. Using $\tilde{f} (\tilde{M}) \subset
\mathfrak{r}$, we then get $\ker \tilde{f} \subset \oplus_{j \geq 1}
\tilde{M}^{(j)}$. On the other hand, since $\tilde{f}
(\tilde{M}^{(j)}) \subset \mathfrak{r}^j$ and
$\mathfrak{r}^{\rl(\Lambda)} = (0)$, we get that $\ker \tilde{f}
\supset \oplus_{j \geq \rl (\Lambda)} \tilde{M}^{(j)}$.
\end{itemize}
\end{proof}

Even though this result is similar to Proposition
\ref{prop:p1p2_tensor_alg_onto}, and then also to \cite[Theorem
III.1.9]{ARS}, there is an important difference.  In \cite[Theorem
III.1.9 (b)]{ARS} one assumes $\Lambda$ to be a finite dimensional
basic $k$-algebra where $k$ is algebraically closed, and shows there
is a surjective $k$-algebra homomorphism $\tilde{f}' \colon T
(\Lambda / \mathfrak{r}, \mathfrak{r} / \mathfrak{r}^2) \to \Lambda$
with $\oplus_{j \geq \rl (\Lambda)} (\mathfrak{r} /
\mathfrak{r}^2)^j \subset \ker \tilde{f}' \subset \oplus_{j \geq 2}
(\mathfrak{r} / \mathfrak{r}^2)^j$.  This is a special case of
Proposition \ref{prop:p1p2_tensor_alg_onto}(b), where one assumes
that $\Lambda$ is a finite dimensional basic $\mathfrak{r}$-split
$k$-algebra, and shows there is a surjective $k$-algebra
homomorphism $\tilde{f}'' \colon T (\Lambda / \mathfrak{r},
\mathfrak{r} / \mathfrak{r}^2) \to \Lambda$ with $\oplus_{j \geq \rl
(\Lambda)} (\mathfrak{r} / \mathfrak{r}^2)^j \subset \ker
\tilde{f}'' \subset \oplus_{j \geq 2} (\mathfrak{r} /
\mathfrak{r}^2)^j$. Hence Proposition
\ref{prop:tensor_algebra_surjective} differs from these two results
since the kernel of $\tilde{f}'$ and $\tilde{f}''$ sits inside
$\oplus_{j \geq 2} \mathfrak{r} / \mathfrak{r}^2$, while
$\ker(\tilde{f}) \subset \oplus_{j \geq 1} \tilde{M}^j$ (note which
sets the direct sum is taken over). This difference is not
surprising, since $\tilde{M}$ usually is a larger module than
$\mathfrak{r} / \mathfrak{r}^2$. If $k$ is an algebraically closed
field, then $\tilde{M} \simeq \mathfrak{r} / \mathfrak{r}^2$, so
$\tilde{f} \mid_{\tilde{M}} = \tilde{f}' \mid_{\mathfrak{r} /
\mathfrak{r}^2} \colon \mathfrak{r} / \mathfrak{r}^2 \to
f(\mathfrak{r} / \mathfrak{r}^2) = f(\tilde{M})$ is a $\Lambda /
\mathfrak{r} - \Lambda / \mathfrak{r}$-bimodule monomorphism
intersecting trivially both with the image of $\tilde{f} \mid_{D}$
and $\mathfrak{r}^2$, so we get $\ker(\tilde{f}) \subset \oplus_{j
\geq 2} \tilde{M}^j$. This shows that \cite[Theorem III.1.9
(b)]{ARS} is a special case of Proposition
\ref{prop:tensor_algebra_surjective}(b). On the other hand,
Proposition \ref{prop:p1p2_tensor_alg_onto}(b) is not a special case
of Proposition \ref{prop:tensor_algebra_surjective}(b).

The following proposition is similar to Proposition
\ref{prop:p1p2_ring_is_tensor_ring_of_species}.

\begin{proposition}\label{prop:p1_ring_is_tensor_ring_of_species}
Let $\Lambda$ be a finite dimensional basic split $k$-algebra, let
$\tilde{\mathcal{S}}_\Lambda$ be the enlarged species of $\Lambda$,
and let $\tilde{J}$ be the ideal $\oplus_{j \geq 1} \tilde{M}^j$ of
$T(\tilde{\mathcal{S}}_\Lambda)$. Then $\Lambda \simeq
T(\tilde{\mathcal{S}}_\Lambda)/\langle \rho \rangle$ for a set of
relations $\rho$ such that $\tilde{J}^{\rl (\Lambda)} \subset
\langle \rho \rangle \subset \tilde{J}$.
\end{proposition}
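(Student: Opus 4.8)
The plan is to follow the proof of Proposition~\ref{prop:p1p2_ring_is_tensor_ring_of_species} almost verbatim, with the species $\mathcal{S}_\Lambda$ and its bimodule $M = \mathfrak{r}/\mathfrak{r}^2$ replaced throughout by the enlarged species $\tilde{\mathcal{S}}_\Lambda$ and the bimodule $\tilde{M}$. First I would use the identification $T(\tilde{\mathcal{S}}_\Lambda) = T(\oplus_{i \in I} D_i, \oplus_{i,j \in I} ({}_j \tilde{M}_i)) = T(\Lambda/\mathfrak{r}, \tilde{M}) = \tilde{T}(\Lambda/\mathfrak{r}, \mathfrak{r}/\mathfrak{r}^2)$ together with Proposition~\ref{prop:tensor_algebra_surjective}(b) to obtain a surjective $k$-algebra homomorphism $\tilde{f} \colon T(\tilde{\mathcal{S}}_\Lambda) \to \Lambda$ with $\oplus_{j \geq \rl(\Lambda)} \tilde{M}^{(j)} \subset \ker \tilde{f} \subset \oplus_{j \geq 1} \tilde{M}^{(j)}$. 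Since every homogeneous element of degree $\geq n$ is a product of $n$ homogeneous elements of positive degree, one has $\tilde{J}^{n} = \oplus_{j \geq n} \tilde{M}^{(j)}$ for all $n$, so this reads $\tilde{J}^{\rl(\Lambda)} \subset \ker \tilde{f} \subset \tilde{J}$, and $T(\tilde{\mathcal{S}}_\Lambda)/\ker \tilde{f} \simeq \Lambda$.

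It then remains only to realize $\ker \tilde{f}$ as the ideal generated by a set of relations. Pick any set $\rho' = \{ \sigma'_t \}_{t \in T'}$ of ideal generators of $\ker \tilde{f}$, and let $\{ e_i \}_{i \in I}$ be the complete set of pairwise orthogonal primitive idempotents of $T(\tilde{\mathcal{S}}_\Lambda)$ given by the identity elements $1_{D_i}$ of the degree-zero summands. Writing $\sigma'_t = 1 \sigma'_t 1 = \sum_{i,j \in I} e_j \sigma'_t e_i$, each term $e_j \sigma'_t e_i$ lies in $e_j \tilde{J} e_i$, so using the explicit description of $\tilde{M}^{(n)}$ recorded before the proposition it is a finite sum of tensors over $\oplus_{i \in I} D_i$ of elements of the bimodules ${}_j \tilde{M}_i$, all of which start at the vertex $i$ and end at the vertex $j$; in other words $e_j \sigma'_t e_i$ is a relation ${}_j \sigma_i$ of $\tilde{\mathcal{S}}_\Lambda$ in the sense of Section~2. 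Taking $\rho = \bigcup_{t \in T'} \bigcup_{i,j \in I} \{ e_j \sigma'_t e_i \}$ we obtain a set of relations with $\langle \rho \rangle = \langle \rho' \rangle = \ker \tilde{f}$, hence $\Lambda \simeq T(\tilde{\mathcal{S}}_\Lambda)/\langle \rho \rangle$ with $\tilde{J}^{\rl(\Lambda)} \subset \langle \rho \rangle \subset \tilde{J}$, as claimed.

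The one substantive difference from Proposition~\ref{prop:p1p2_ring_is_tensor_ring_of_species} is that the upper bound $\langle \rho \rangle \subset J^2$ there must here be weakened to $\langle \rho \rangle \subset \tilde{J}$: because ${}_j \tilde{M}_i = D_j \otimes_k {}_j M_i \otimes_k D_i$ is in general strictly larger than ${}_j M_i$ and the comparison map ${}_j g_i \colon {}_j \tilde{M}_i \to {}_j M_i$ has a nontrivial kernel, the map $\tilde{f}$ need no longer be a monomorphism on the degree-one part $\tilde{M}$. I expect the only point requiring any care to be the verification that $e_j \sigma'_t e_i$ really has the form of a relation — that is, that cutting a general element of $\tilde{J}$ down by the vertex idempotents produces an honest sum of pure tensors with fixed source and target in the ${}_j \tilde{M}_i$ — but this is a formal consequence of the decomposition of $\tilde{M}^{(n)}$ and is entirely parallel to the corresponding step in the $\mathfrak{r}$-split case.
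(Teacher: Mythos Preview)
Your proposal is correct and follows essentially the same approach as the paper: identify $T(\tilde{\mathcal{S}}_\Lambda)$ with $\tilde{T}(\Lambda/\mathfrak{r},\mathfrak{r}/\mathfrak{r}^2)$, invoke Proposition~\ref{prop:tensor_algebra_surjective}(b) to get $\tilde{f}$ with the stated kernel bounds, and then replace an arbitrary generating set of $\ker\tilde{f}$ by relations via the idempotent decomposition. In fact you spell out the idempotent-cutting step and the identity $\tilde{J}^n=\bigoplus_{j\ge n}\tilde{M}^{(j)}$ more carefully than the paper does, which at this point simply asserts that a generating set of relations can be found and refers back to the argument for Proposition~\ref{prop:p1p2_ring_is_tensor_ring_of_species}.
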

\begin{proof}
\sloppy The proof is similar to the proof of Proposition
\ref{prop:p1p2_ring_is_tensor_ring_of_species}.  First recall that
$T(\tilde{\mathcal{S}}_\Lambda) = T(\Lambda/ \mathfrak{r}, \tilde{M}) =
\tilde{T}(\Lambda / \mathfrak{r}, \mathfrak{r} / \mathfrak{r}^2)$.
From Proposition \ref{prop:tensor_algebra_surjective} we have an
epimorphism $\tilde{f} \colon T(\tilde{\mathcal{S}}_\Lambda) =
\tilde{T}(\Lambda / \mathfrak{r}, \mathfrak{r} / \mathfrak{r}^2) \to
\Lambda$ with $\oplus_{j \geq \rl (\Lambda)} \tilde{M}^j \subset
\ker \tilde{f} \subset \oplus_{j \geq 1} \tilde{M}^j$.  It is
possible to find a set of relations $\rho = \{ \sigma_t \}_{t\in T}$
in $\ker \tilde{f}$ which generates $\ker \tilde{f}$ as an ideal in
$T(\tilde{\mathcal{S}}_\Lambda)$. Then $\tilde{J}^{\rl (\Lambda)} =
\oplus_{j \geq \rl (\Lambda)} \tilde{M}^j \subset \langle \rho
\rangle \subset \oplus_{j \geq 1} \tilde{M}^j = \tilde{J}$. Since
$T(\tilde{\mathcal{S}}_\Lambda)/\langle \rho \rangle \simeq
\tilde{T}(\Lambda / \mathfrak{r}, \mathfrak{r} / \mathfrak{r}^2) /
\ker \tilde{f} \simeq \Lambda$, we are done.
\end{proof}

Using Corollary
\ref{cor:species_with_rel_is_tensor_with_ideal} and Proposition
\ref{prop:p1_ring_is_tensor_ring_of_species} we get the following corollary.

\begin{corollary}
Let $\Lambda$ be a finite dimensional basic split $k$-algebra.  Then
the category $\mod \Lambda$ is equivalent to the category $\rep
(\tilde{\mathcal{S}}_\Lambda, \rho)$ where $\rho$ is a set of
relations such that $\tilde{J}^{\rl (\Lambda)} \subset \langle \rho
\rangle \subset \tilde{J}$.
\end{corollary}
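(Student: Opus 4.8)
The plan is to chain together the two results cited immediately before the statement. First I would record that the enlarged species $\tilde{\mathcal{S}}_\Lambda = (D_i, {_j\tilde{M}_i})_{i,j\in I}$ is a finite $k$-species: the index set $I$ is finite because $\Lambda$ is finite dimensional, each $D_i$ is a finite dimensional division algebra over $k$, and ${_j\tilde{M}_i} = D_j \otimes_k {_jM_i} \otimes_k D_i$ is finite dimensional over $k$ with $k$ acting centrally, since $D_i$, $D_j$ and ${_jM_i}$ are all finite dimensional over $k$ with $k$ central. Hence Corollary \ref{cor:species_with_rel_is_tensor_with_ideal} is applicable to $\tilde{\mathcal{S}}_\Lambda$ together with any set of relations on it.

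Next I would invoke Proposition \ref{prop:p1_ring_is_tensor_ring_of_species}: since $\Lambda$ is a finite dimensional basic split $k$-algebra, there is a set of relations $\rho$ on $\tilde{\mathcal{S}}_\Lambda$ with $\Lambda \simeq T(\tilde{\mathcal{S}}_\Lambda)/\langle\rho\rangle$ and $\tilde{J}^{\rl(\Lambda)} \subset \langle\rho\rangle \subset \tilde{J}$. This $\rho$ is the one that appears in the statement, and the containments are precisely the ones required.

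Finally, applying Corollary \ref{cor:species_with_rel_is_tensor_with_ideal} to the finite $k$-species $\tilde{\mathcal{S}}_\Lambda$ and the set of relations $\rho$ produces an equivalence $\rep(\tilde{\mathcal{S}}_\Lambda, \rho) \simeq \mod(T(\tilde{\mathcal{S}}_\Lambda)/\langle\rho\rangle)$. Composing this with the equivalence $\mod(T(\tilde{\mathcal{S}}_\Lambda)/\langle\rho\rangle) \simeq \mod\Lambda$ induced by the ring isomorphism $T(\tilde{\mathcal{S}}_\Lambda)/\langle\rho\rangle \simeq \Lambda$ of Proposition \ref{prop:p1_ring_is_tensor_ring_of_species} yields the desired equivalence $\mod\Lambda \simeq \rep(\tilde{\mathcal{S}}_\Lambda, \rho)$, with $\rho$ satisfying $\tilde{J}^{\rl(\Lambda)} \subset \langle\rho\rangle \subset \tilde{J}$.

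Since every step is either a citation of an earlier result or an elementary observation, there is essentially no obstacle. The only points that merit a moment's care are confirming that $\tilde{\mathcal{S}}_\Lambda$ genuinely satisfies the hypotheses of Corollary \ref{cor:species_with_rel_is_tensor_with_ideal}, namely that it is a \emph{finite} $k$-species, and noting that the relations produced by Proposition \ref{prop:p1_ring_is_tensor_ring_of_species} are literally relations on $\tilde{\mathcal{S}}_\Lambda$ (they are, being built from elements of the tensor powers $\tilde{M}^{(j)}$).
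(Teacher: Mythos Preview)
Your proposal is correct and follows exactly the approach the paper takes: the paper's entire proof is the sentence ``Using Corollary \ref{cor:species_with_rel_is_tensor_with_ideal} and Proposition \ref{prop:p1_ring_is_tensor_ring_of_species} we get the following corollary.'' Your additional verification that $\tilde{\mathcal{S}}_\Lambda$ is a finite $k$-species is a reasonable sanity check that the paper leaves implicit.
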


In the case when $\Lambda$ is $\mathfrak{r}$-split, then $\Lambda
\simeq T(\mathcal{S}_\Lambda)/\langle \rho \rangle$ where $\langle
\rho \rangle$ is an \emph{admissible ideal}, i.e.\ there exists a natural number $n \geq 2$ such that $J^n \subset \langle \rho \rangle \subset J^2$. On the other hand, if
$\Lambda$ is only split the ideal $\langle \rho \rangle$ is no
longer admissible in general. This difference will play an important role when investigating hereditary algebras in the next sections.

\section{Hereditary basic finite dimensional split algebras}

In this section we will continue looking at basic finite dimensional split algebras that are not assumed to be $\mathfrak{r}$-split, but with the extra assumption that they are hereditary. In contrast to the $\mathfrak{r}$-split algebras, the species of these hereditary algebras might have non-zero relations. However, the relations are corresponding to subquivers for which the underlying graph contains cycles. We start by introducing these subquivers.

A quiver consisting of an arrow $i \to j$ together with a finite
number (possibly zero) of paths between $i$ and $j$ of length
greater than one will be called a \emph{canonical
quiver}\index{canonical quiver}.
$$\xymatrix@R=0.2cm{ & \circ \ar[r] & \dots \ar[r] & \circ \ar[ddddr] & \\ & \circ \ar[r] & \dots \ar[r] & \circ \ar[dddr] & \\ & \vdots &  & \vdots & \\ & \circ \ar[r] & \dots \ar[r] & \circ \ar[dr] & \\ i \ar[rrrr] \ar[ur] \ar[uuur] \ar[uuuur]  &  &  &  & j }$$
We call the arrow $i \to j$ in a canonical quiver the
\emph{specifying arrow of the canonical quiver}\index{specifying
arrow}\index{canonical quiver!specifying arrow}. If $Q$ is a finite
quiver without oriented cycles or double arrows, and $i \to j$ is
the specifying arrow of a canonical quiver $Q' \subset Q$, then
there exists a largest canonical quiver $Q'' \subset Q$ having $i
\to j$ as its specifying arrow. We say that $Q''$ is \emph{the}
canonical quiver corresponding to $i \to j$ in $Q$. Note that a
canonical quiver can be equal to its specifying arrow.  If
$Q_{\tilde{\mathcal{S}}}$ is the quiver of an enlarged species
$\tilde{\mathcal{S}}_\Lambda$ of a finite dimensional hereditary
basic split algebra $\Lambda$, then $Q_{\tilde{\mathcal{S}}}$ is
finite without oriented cycles and without double arrows, so in this
setting we always have a largest canonical quiver corresponding to any given
arrow.

Let $\mathcal{S} = (D_i, {_j M_i})_{i,j \in I}$ be a species for
which the underlying quiver $Q_\mathcal{S}$ is finite and without
oriented cycles, and let ${_j \sigma_i} = g_1 + \cdots + g_n \in {_j
\mathcal{M}_i} = D_i T(\mathcal{S}) D_j$ be a relation. If ${_j
\sigma_i} \not\in J^2$ where $J = \oplus_{i \geq 1} M^{(i)}$, there
must be an arrow $i \to j \in Q_{\mathcal{S}}$ and at least one $g_l
\in {_j M_i}$.  If there is a set $\{ g_l \}$
in ${_j M_i}$, say $\{g_1, \dots
g_m \}$ where $g_i \in {_j M_i}$ and $m \leq n$, letting $\sigma' = g'_1 + g_{m+1} + \cdots
g_n$ where $g'_1 = g_1 + \cdots g_m$ we see that
$T(\mathcal{S})/\langle \sigma \rangle \simeq T(\mathcal{S})/\langle
\sigma' \rangle$ since $\sigma = \sigma'$. Hence we can always reduce to the case with only one $g_l \in {_j M_i}$

If $\mathcal{S} = (D_i, {_j M_i})_{i,j \in I}$ is a species, where
the underlying quiver $Q_\mathcal{S}$ is finite and without oriented
cycles, and ${_j \sigma_i} = g_1 + \cdots + g_n$ is a relation where
$g_l \in {_j M_i}$ while $g_k \not\in {_j M_i}$ for all $k \not= l$,
then we call the relation ${_j \sigma_i}$ a \emph{canonical
relation}\index{canonical relation}.  By renumbering we will always
assume $l = 1$ for a canonical relation. If $n > 1$ we call $\sigma$
a \emph{strong canonical relation}\index{canonical
relation!strong}\index{strong canonical!relation}. We call $\rho$ a
\emph{canonical set of relations}\index{canonical set of relations}
and $\langle \rho \rangle$ a \emph{canonical ideal}\index{canonical
ideal} of $T(\mathcal{S})$ if $\rho = \{ \sigma_t \}_{t \in T}$ such
that
\begin{itemize}
\item[(i)] all $\sigma_t$ are canonical relations
\item[(ii)] if $\{ \sigma_t \}_{t \in T'}$ is the set of relations in $\rho$ which
start in $i$ and end in $j$, and $\{ g^t_1 \}_{t \in T'}$ is the
corresponding set of summands which are elements of $_j M_i$, then
$\langle g^t_1 \rangle \cap \langle g^{t'}_{1} \rangle = (0)$ for
all $t \not= t'$
\end{itemize}
If all the canonical relations are strong, we call $\rho$ a
\emph{strong canonical set of relations}\index{canonical set of
relations!strong}\index{strong canonical!set of relations} and
$\langle \rho \rangle$ a \emph{strong canonical
ideal}\index{canonical ideal!strong}\index{strong canonical!ideal}.

\begin{lemma}\label{lemma:species_with_canonical_rel_is_hereditary}
Let $\mathcal{S}$ be a species for which the underlying quiver
$Q_\mathcal{S}$ is finite and without oriented cycles, and let
$\rho$ be a canonical set of relations. Then $T(\mathcal{S}) /
\langle \rho \rangle$ is hereditary.
\end{lemma}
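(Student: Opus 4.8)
\textit{Proof plan.} The plan is to show that $\Lambda:=T(\mathcal S)/\langle\rho\rangle$ has projective radical; by the criterion \cite[Theorem 2.35]{La2} used in the proof of Lemma~\ref{lemma:species_without_relations_hereditary} this forces $\Lambda$ to be hereditary. Write $T:=T(\mathcal S)$, $J:=\bigoplus_{l\ge 1}M^{(l)}=\rad T$, let $e_i$ be the idempotent at vertex $i$, and $P^T_i=Te_i$, $P^\Lambda_i=\Lambda e_i$ the indecomposable projectives. Since $Q_{\mathcal S}$ is finite without oriented cycles, $T$ is finite dimensional and $J$ is nilpotent, and since a canonical relation has no component in $M^{(0)}=D$ we have $\langle\rho\rangle\subseteq J$; hence $\rad\Lambda$ is the image of $J$, and it suffices to prove that each $\rad P^\Lambda_i$ is a projective $\Lambda$-module.

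First I would recall from the proof of Lemma~\ref{lemma:species_without_relations_hereditary} that $\rad P^T_i=Je_i\cong\bigoplus_{i\to i'}(P^T_{i'})^{d_{i'i}}$ with $d_{i'i}=\dim_{D_{i'}}{}_{i'}M_i$: fixing a $D_{i'}$-basis $B_a$ of ${}_{i'}M_i$ for each arrow $a\colon i\to i'$, the summand indexed by $b\in B_a$ is $Tb\cong P^T_{i'}$ (an isomorphism by Lemma~\ref{lemma:map_between_proj_is_mono}). I would choose $B_a$ adapted to the sub-$D_{i'}$–$D_i$-bimodule $N_a:=\bigoplus_{i(t)=i,\,j(t)=i'}\langle g^t_1\rangle\subseteq{}_{i'}M_i$ — a direct sum by condition~(ii) — writing $B_a=B^0_a\sqcup B^1_a$ with $B^0_a$ a $D_{i'}$-basis of $N_a$; correspondingly $Je_i=R_i\oplus R'_i$ with $R_i=\bigoplus_a\bigoplus_{b\in B^0_a}Tb$ the ``$N_a$-part''. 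Applying $\Lambda\otimes_T-$ to $0\to Je_i\to Te_i\to S^T_i\to 0$ (with $S^T_i$ the simple top of $P^T_i$), and using that $Je_i$ is projective, $\Lambda\otimes_T Te_i=P^\Lambda_i$ and $\Lambda\otimes_T S^T_i=S_i$, yields a short exact sequence of left $\Lambda$-modules
$$0\to \Tor^T_1(\Lambda,S^T_i)\to \Lambda\otimes_T Je_i\to\rad P^\Lambda_i\to 0,\qquad \Lambda\otimes_T Je_i\cong\bigoplus_{i\to i'}(P^\Lambda_{i'})^{d_{i'i}}.$$
So the middle term is projective and everything reduces to splitting this sequence; write $K_i:=\Tor^T_1(\Lambda,S^T_i)$ for its kernel.

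The heart of the argument is the analysis of $K_i\subseteq\Lambda\otimes_T Je_i$. I would first check, using $\langle\rho\rangle\cap Je_i=\langle\rho\rangle e_i$ together with the identity $1\otimes(\sigma_t\cdot z)=\overline{\sigma_t}\otimes z=0$ in $\Lambda\otimes_T Je_i$ for $z\in Je_i$, that $K_i$ is the left $\Lambda$-submodule generated by the elements $1\otimes\sigma_t d$ with $i(t)=i$ and $d\in D_i$ — so the relations starting at a vertex other than $i$ contribute nothing to $K_i$ after tensoring up. Each such generator splits as $1\otimes\sigma_t d=(1\otimes g^t_1 d)+(1\otimes h_t d)$, where the leading term $1\otimes g^t_1 d$ lies in $\Lambda\otimes_T R_i$ (as $g^t_1 d\in\langle g^t_1\rangle\subseteq N_a$ is a degree-one element of the $R_i$-block), while the tail $1\otimes h_t d$, coming from $h_t\in J^2$, has its $\Lambda\otimes_T R_i$-component inside $\rad\Lambda\cdot(\Lambda\otimes_T R_i)$. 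Because $N_a=\bigoplus_t\langle g^t_1\rangle$ (condition~(ii)), the elements $g^t_1 d$ together with their left $D_{i'}$-multiples span $N_a$, so the leading terms generate $\Lambda\otimes_T R_i$ modulo its radical and, by Nakayama, the projection $K_i\to\Lambda\otimes_T R_i$ is surjective; one then checks it is injective, so $K_i$ is the graph of a $\Lambda$-homomorphism $\delta_i\colon\Lambda\otimes_T R_i\to\Lambda\otimes_T R'_i$. The automorphism $(\xi,\eta)\mapsto(\xi,\eta+\delta_i\xi)$ of $\Lambda\otimes_T(R_i\oplus R'_i)$ carries $K_i$ onto the summand $\Lambda\otimes_T R_i$, whence $\rad P^\Lambda_i\cong\Lambda\otimes_T R'_i\cong\bigoplus_{i\to i'}(P^\Lambda_{i'})^{|B^1_a|}$, which is projective. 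Thus $\rad\Lambda=\bigoplus_i\rad P^\Lambda_i$ is projective and $\Lambda$ is hereditary.

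The main obstacle is the claim in the last paragraph that $K_i$ is precisely the graph of a homomorphism between the complementary projective summands $\Lambda\otimes_T R_i$ and $\Lambda\otimes_T R'_i$ — in particular the injectivity of $K_i\to\Lambda\otimes_T R_i$. This is exactly where both defining properties of a canonical set of relations are indispensable: that each $\sigma_t$ has a single degree-one summand $g^t_1$ which genuinely lies in an arrow bimodule (so the leading direction sits in $\Lambda\otimes_T R_i$ and the tail $h_t$ feeds into $\Lambda\otimes_T R'_i$), and condition~(ii), which forces the $N_a$ to be honest direct sums and hence pins down the rank of the graph. The bookkeeping on the tails $h_t$ is cleanest if one fixes a topological ordering of the vertices of $Q_{\mathcal S}$ and processes the arrows $i\to i'$ accordingly, so that when an arrow is reached every path occurring in the relevant $h_t$ has already been accounted for.
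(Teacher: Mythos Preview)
Your strategy is sound and reaches the same endpoint as the paper --- showing that $\rad P^\Lambda_i$ is a direct sum of indecomposable projectives indexed by a $D_j$-basis of ${_jM_i}/\langle g^t_1\rangle_t$ for each arrow $i\to j$ --- but you take a considerably longer road to get there. The paper does not pass through $\Tor$, graphs of morphisms, or Nakayama. It writes down the answer directly: with ${_jN_i}:={_jM_i}/\langle g^t_1\rangle_t$, it asserts the isomorphism of left $\Lambda$-modules
\[
\mathfrak{r}D_i=(\rad T(\mathcal S)\,D_i)/(\langle\rho\rangle D_i)\;\simeq\;\bigoplus_{j}\bigl(T(\mathcal S)\,{_jN_i}\bigr)/\bigl(\langle\rho\rangle\,{_jN_i}\bigr)\;\simeq\;\bigoplus_{j}(\dim_{D_j}{_jN_i})\,P_j,
\]
and then invokes \cite[Theorem 2.35]{La2}. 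The underlying reason the first isomorphism holds is exactly the phenomenon you isolate: modulo $\langle\rho\rangle$ the degree-one element $g^t_1$ is expressible through the tail $h_t\in J^2$, so the summand $T\cdot\langle g^t_1\rangle_t$ of $\rad T(\mathcal S)D_i$ becomes redundant in the quotient. Your $\Tor$/graph argument is a rigorous unpacking of precisely this redundancy.

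What your approach buys is an honest separation of the two uses of the definition of ``canonical set'': condition~(i) places the leading term of each generator of $K_i$ in the summand $\Lambda\otimes_T R_i$ and the tail in the radical, and condition~(ii) makes the leading terms jointly hit the top of $\Lambda\otimes_T R_i$, giving surjectivity of $K_i\to\Lambda\otimes_T R_i$ by Nakayama. What the paper's approach buys is brevity: once one accepts that the relation $g^t_1+h_t$ lets one trade the generator $g^t_1$ for generators already present in $\bigoplus_{j'}T\,{_{j'}M_i}$ (since $h_t$ factors through some first arrow out of $i$), the displayed isomorphism is immediate and no graph analysis is needed. Your flagged ``main obstacle'' --- injectivity of $K_i\to\Lambda\otimes_T R_i$ --- corresponds exactly to the paper's unjustified first isomorphism; in both cases the cleanest verification is the inductive/topological-order bookkeeping you mention, or equivalently a dimension count using that $\dim_k\rad P^\Lambda_i$ can be computed once the statement is known for all vertices reachable from $i$.
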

\begin{proof}
\sloppy Let $\Lambda = T(\mathcal{S}) / \langle \rho \rangle$ be as
described in the lemma, then every indecomposable projective
$\Lambda$-module is of the form $P_i \simeq \Lambda D_i$ where
$\Lambda /\mathfrak{r} \simeq \oplus_{i \in I} D_i$. We want to show
that the radical $\mathfrak{r} D_i$ of every projective module
$\Lambda D_i$ is again projective.  Assume $\{{_j \sigma^t_i} \}_{t
\in {_j T_i}}$ is the complete set of (canonical) relations starting
in $i$ and ending in $j$, and let ${_j N_i} = {_j M_i} / \langle \{
g^t_1 \}_{t \in {_j T_i}} \rangle$ where ${_j \sigma^t_i} = g^t_1 +
g^t_2 + \dots g^t_{n_l}$ and $g^t_1 \in {_j M_i}$. Let $I'$ be the
set of vertices $j \in I$ for which ${_j M_i} \not= (0)$, and let
$I' = I_c \cup I_n$ be the disjoint union where $j \in I_c$ if there
exists a (canonical) relation ${_j \sigma_i}$, and $j \in I_n$ if no
such relation exists. Then
\begin{align*}\mathfrak{r} D_i & = (\rad T(\mathcal{S}) D_i )/ (\langle \rho
\rangle D_i) \\ & \simeq (\oplus_{j \in I_c} (T(\mathcal{S}) {_j
N_i}) / (\langle \rho \rangle {_j N_i})) \oplus (\oplus_{j \in I_n}
(T(\mathcal{S}) {_j M_i}) / (\langle \rho \rangle {_j M_i})) \\ &
\simeq (\oplus_{j \in I_c} (\dim_{D_j} {_j N_i}) (T(\mathcal{S})
D_j) / (\langle \rho \rangle D_j)) \oplus (\oplus_{j \in I_n}
(\dim_{D_j} {_j M_i}) (T(\mathcal{S}) D_j) / (\langle \rho \rangle
D_j)) \\ & = (\oplus_{j \in I_c} (\dim_{D_j} {_j N_i}) \Lambda D_j)
\oplus (\oplus_{j \in I_n} (\dim_{D_j} {_j M_i}) \Lambda D_j) \\ &
\simeq (\oplus_{j \in I_c} (\dim_{D_j} {_j N_i}) P_j) \oplus
(\oplus_{j \in I_n} (\dim_{D_j} {_j M_i}) P_j)
\end{align*}

This shows that $\mathfrak{r} D_i$ is projective, hence
$\mathfrak{r} = \oplus_{i \in I} \mathfrak{r} D_i$ is projective,
which implies that $\Lambda$ is hereditary \cite[Theorem 2.35]{La2}.
\end{proof}

Let $\Lambda$ be a finite dimensional hereditary basic split
$k$-algebra, where $\Lambda \simeq
T(\tilde{\mathcal{S}}_\Lambda)/\langle \rho \rangle$ and
$Q_{\tilde{\mathcal{S}}_\Lambda}$ is the underlying quiver of
$\tilde{\mathcal{S}}_\Lambda$.  From the discussion after
Proposition \ref{prop:p1p2_ring_is_tensor_ring_of_species} we see
that we can choose the set of relations $\rho = \{ \sigma_t \}_{t
\in T}$ finite, therefore we can find a finite minimal set of
relations which generate $\langle \rho \rangle$. The next
proposition reveals that even in the non $\mathfrak{r}$-split case
we find interesting information on the ideal $\langle \rho \rangle$.

\begin{proposition}\label{prop:relation_is_arrow}
Let $\Lambda$ be a finite dimensional hereditary basic split
$k$-algebra, where $\Lambda \simeq
T(\tilde{\mathcal{S}}_\Lambda)/\langle \rho \rangle$ and $\rho$ is a
finite set of relations. If $\rho = \{ \sigma_1, \dots , \sigma_m
\}$ is a minimal set, then $\rho$ is a canonical set of relations.
\end{proposition}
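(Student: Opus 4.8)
The statement to prove is that a minimal finite set of relations $\rho = \{\sigma_1,\dots,\sigma_m\}$ defining a hereditary algebra $\Lambda \simeq T(\tilde{\mathcal{S}}_\Lambda)/\langle\rho\rangle$ must be a canonical set of relations. Recall from Proposition \ref{prop:p1_ring_is_tensor_ring_of_species} that $\tilde J^{\rl(\Lambda)} \subset \langle\rho\rangle \subset \tilde J$, so $\langle\rho\rangle$ need not sit inside $\tilde J^2$; this is exactly the feature that distinguishes the split case from the $\mathfrak{r}$-split case. The plan is to verify the two defining conditions of a canonical set of relations: (i) every $\sigma_t$ is a canonical relation, i.e.\ after the reduction described before Lemma \ref{lemma:species_with_canonical_rel_is_hereditary} it has exactly one summand lying in some $_j\tilde M_i$ (an arrow) and all other summands of length $\geq 2$; and (ii) the distinct "arrow parts" $g^t_1$ of relations sharing the same source and target generate $D_j-D_i$-subbimodules of $_j\tilde M_i$ that intersect trivially.

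**Step 1: each $\sigma_t$ has a nonzero length-one component.** First I would argue that no $\sigma_t$ can lie entirely in $\tilde J^2$. If some $\sigma_t \in \tilde J^2$, then the ideal it generates, together with the others, would still have to cut $T(\tilde{\mathcal{S}}_\Lambda)$ down to a hereditary algebra; but an ideal contained in $\tilde J^2$ (the analogue of $\mathfrak{r}^2$) that is nonzero would, by Lemma \ref{lemma:ideal_in_square_radical} applied to $T(\tilde{\mathcal{S}}_\Lambda)/\langle\rho \setminus \{\sigma_t\}\rangle$ — which one must first check is itself hereditary, or argue more directly — destroy heredity. Minimality is what forces $\sigma_t \neq 0$ in the quotient by the others, so its length-one component must be nonzero. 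After the harmless reduction (collecting all length-one summands of $\sigma_t$ into a single element of $_j\tilde M_i$, as in the paragraph preceding Lemma \ref{lemma:species_with_canonical_rel_is_hereditary}), we get that $\sigma_t$ is a canonical relation, proving (i).

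**Step 2: the intersection condition (ii).** Fix a pair $i,j$ and let $\{\sigma_t\}_{t\in T'}$ be the relations in $\rho$ from $i$ to $j$, with arrow parts $\{g^t_1\}_{t\in T'} \subset {}_j\tilde M_i$. Suppose for contradiction that $\langle g^t_1\rangle \cap \langle g^{t'}_1\rangle \neq (0)$ for some $t\neq t'$. The idea is to use this nontrivial intersection of $D_j-D_i$-subbimodules to produce a nonzero element of $\langle\rho\rangle$ lying in $\tilde J^2$, contradicting heredity via Lemma \ref{lemma:ideal_in_square_radical} again: a common element $d_j g^t_1 d_i = d'_j g^{t'}_1 d'_i$ of the two subbimodules gives $d_j\sigma_t d_i - d'_j\sigma_{t'}d'_i \in \langle\rho\rangle$ with its length-one parts cancelling, hence this element lies in $\tilde J^2$. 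One then needs that this element is nonzero in $T(\tilde{\mathcal{S}}_\Lambda)$ (it has nontrivial higher-degree components, using minimality/independence of the relations) and that it is not already in the ideal generated by higher relations in a way that would trivialize the argument — here minimality of $\rho$ is again essential. Combined with Lemma \ref{lemma:ideal_in_square_radical}, this contradicts heredity of $\Lambda$, establishing (ii).

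**The main obstacle.** The delicate point is Step 2: producing from the bimodule-intersection a genuinely nonzero element of $\langle\rho\rangle$ inside $\tilde J^2$ and correctly invoking Lemma \ref{lemma:ideal_in_square_radical}, which is a statement about ideals of a hereditary algebra contained in the \emph{square} of the radical. One must be careful that $\Lambda$, not merely $T(\tilde{\mathcal{S}}_\Lambda)$, is the hereditary algebra in play, so the element must be exhibited as a nonzero element of $\langle\rho\rangle/\langle\rho'\rangle$ for an appropriate sub-ideal $\rho'$ — and minimality of $\rho$ is precisely the hypothesis that lets one pass to such a quotient without collapsing. A secondary technical annoyance is handling double arrows: the hypothesis that $Q_{\tilde{\mathcal{S}}_\Lambda}$ has no double arrows (noted in the excerpt for enlarged species of hereditary split algebras) is what guarantees that "arrow part" is unambiguous, so I would invoke that remark early on.
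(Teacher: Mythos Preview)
Your overall plan---show every $\sigma_t$ has a nonzero arrow part, show the arrow parts are independent, and derive contradictions from Lemma~\ref{lemma:ideal_in_square_radical}---matches the paper's, and your substitution idea in Step~2 (forming $d_j\sigma_t d_i - d'_j\sigma_{t'}d'_i \in \tilde J^2$) is exactly the manoeuvre the paper uses. But the obstacle you flag at the end is real and you have not resolved it, whereas the paper does.

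The difficulty is this: Lemma~\ref{lemma:ideal_in_square_radical} needs a hereditary algebra $\Lambda'$ and a nonzero ideal $\mathfrak a \subset (\rad \Lambda')^2$ such that $\Lambda'/\mathfrak a$ is hereditary; the conclusion is $\mathfrak a = 0$. In your Step~1 you propose $\Lambda' = T(\tilde{\mathcal S}_\Lambda)/\langle \rho \setminus \{\sigma_t\}\rangle$, but nothing tells you this is hereditary. In your Step~2 the element you produce lies in $\langle\rho\rangle$, so in $\Lambda$ it is already zero; and if you try to use $T(\tilde{\mathcal S}_\Lambda)$ itself as $\Lambda'$, you only learn that a proper intermediate quotient is non-hereditary, which says nothing about $\Lambda$. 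So neither choice of $\Lambda'$ closes the argument.

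The paper supplies the missing $\Lambda'$ via Lemma~\ref{lemma:species_with_canonical_rel_is_hereditary}. It first performs all your substitutions at once: starting from $\rho$, it repeatedly replaces one of any two canonical relations with overlapping arrow parts by an element of $\tilde J^2$, arriving at a set $\varrho = \varrho' \cup \varrho''$ with $\langle\varrho\rangle = \langle\rho\rangle$, where $\varrho'$ is a \emph{canonical set of relations} and every element of $\varrho''$ lies in $\tilde J^2$. Now Lemma~\ref{lemma:species_with_canonical_rel_is_hereditary} says $\Lambda' := T(\tilde{\mathcal S}_\Lambda)/\langle\varrho'\rangle$ is hereditary, and $\Lambda \simeq \Lambda'/\mathfrak a$ with $\mathfrak a = \langle\varrho''\rangle/(\langle\varrho''\rangle\cap\langle\varrho'\rangle) \subset \rad^2\Lambda'$. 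Lemma~\ref{lemma:ideal_in_square_radical} then forces $\mathfrak a = 0$, i.e.\ $\varrho''$ is redundant. Finally minimality of $\rho$ is invoked: the non-canonical part $\rho''$ of $\rho$ sits in $\varrho''$ untouched, so $\rho'' = \emptyset$; and any overlap among arrow parts in $\rho' = \rho$ would, by the substitution step, push a relation into the redundant $\varrho''$, again contradicting minimality. This single application of Lemma~\ref{lemma:ideal_in_square_radical} to the manufactured hereditary $\Lambda'$ replaces both of your Steps~1 and~2. (Your remark about double arrows is not needed here; that observation is used elsewhere in the section, not in this proof.)
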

\begin{proof}
Let $\rho = \rho' \cup \rho''$ where $\rho'$ is the subset of $\rho$
consisting of canonical relations, and $\rho'' = \rho \setminus
\rho'$.  If there exists a pair $\sigma_t, \sigma_{t'} \in \rho'$ of
relations starting in $i$ and ending in $j$ such that $\langle g^t_1
\rangle \cap \langle g^{t'}_1 \rangle \not= (0)$, then $g^{t'}_1 =
d_j g^t_1 d_i$ for $d_i \in D_i$ and $d_j \in D_j$, so by
substitution we can find $\sigma'_{t'} \in \tilde{J}^2$ such that
$T(\tilde{\mathcal{S}}_\Lambda)/\langle \rho \rangle \simeq
T(\tilde{\mathcal{S}}_\Lambda)/\langle \sigma'_{t'}, \rho \setminus
\sigma_{t'} \rangle$.  By repeating this process we find a set
$\varrho = \varrho' \cup \varrho''$ of relations where every element
of $\varrho'$ is canonical, every element of $\varrho''$ sits inside
$\tilde{J}^2$, and for every pair $\sigma_t, \sigma_{t'} \in
\varrho'$ we have $\langle g^t_1 \rangle \cap \langle g^{t'}_1
\rangle = (0)$, i.e. $\varrho'$ is a canonical set of relations. Let
$\Lambda' \simeq T(\tilde{\mathcal{S}}_\Lambda)/\langle \varrho'
\rangle$, then $\Lambda'$ is hereditary from Lemma
\ref{lemma:species_with_canonical_rel_is_hereditary}.

Now \begin{align*}\Lambda & \simeq
T(\tilde{\mathcal{S}}_\Lambda)/\langle \varrho \rangle \simeq
\Lambda' / (\langle \varrho \rangle/\langle \varrho' \rangle) \\ &
\simeq \Lambda'
 / (\langle \varrho'' \rangle/ (\langle \varrho''
\rangle \cap \langle \varrho' \rangle))\end{align*}  Since $\langle
\varrho'' \rangle \subset \tilde{J}^2$ we have $\langle \varrho''
\rangle / (\langle \varrho'' \rangle \cap \langle \varrho' \rangle)
\subset \tilde{J}^2 / \langle \varrho' \rangle \simeq \rad^2
\Lambda'$ as left $\Lambda'$-modules. Lemma
\ref{lemma:ideal_in_square_radical} implies $\langle \varrho''
\rangle = 0$ since $\Lambda$ was assumed to be hereditary, hence
$\varrho = \varrho'$. Since $\rho'' \subset \varrho'' = \emptyset$,
we have $\rho = \rho'$. Moreover the existence of a pair $\sigma_t,
\sigma_{t'} \in \rho'$ of relations starting in $i$ and ending in
$j$ such that $\langle g^t_1 \rangle \cap \langle g^{t'}_1 \rangle
\not= (0)$ implies $\varrho'' \not= \emptyset$ since $\rho$ was
assumed to be minimal, a contradiction. Hence $\rho$ is a canonical
set of relations.
\end{proof}

\begin{proposition}\label{prop:only_specifying_relations}
Let $\Lambda$ be a finite dimensional hereditary basic split
$k$-algebra. Then $\Lambda \simeq T(\mathcal{S}_m)/\langle \rho
\rangle$ where $\mathcal{S}_m$ is a species unique up to isomorphism for which the
underlying quiver $Q_{\mathcal{S}_m}$ is finite and without oriented
cycles, and $\langle \rho \rangle$ is a strong canonical ideal.
\end{proposition}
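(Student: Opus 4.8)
The plan is to bootstrap from the presentation already available and then shave the non-strong relations into the species. By Proposition \ref{prop:p1_ring_is_tensor_ring_of_species} we may start from $\Lambda \simeq T(\tilde{\mathcal{S}}_\Lambda)/\langle \rho \rangle$ with $\tilde{J}^{\rl(\Lambda)} \subseteq \langle \rho \rangle \subseteq \tilde{J}$; as in the paragraph preceding Proposition \ref{prop:relation_is_arrow} we may choose $\rho$ finite, and then minimal by discarding redundant generators, so that Proposition \ref{prop:relation_is_arrow} makes $\rho$ a canonical set of relations. The key point is that the whole package — a finite minimal canonical set of relations on a finite $k$-species without oriented cycles, presenting the hereditary split algebra $\Lambda$ — is stable under a dimension-dropping reduction that I would iterate until no non-strong relation remains.

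Concretely, suppose at some stage $\Lambda \simeq T(\mathcal{S})/\langle \rho \rangle$ with $\mathcal{S}$ a finite $k$-species without oriented cycles and $\rho$ a finite minimal canonical set of relations. (The argument proving Proposition \ref{prop:relation_is_arrow} uses only that $\Lambda$ is hereditary together with Lemmas \ref{lemma:species_with_canonical_rel_is_hereditary} and \ref{lemma:ideal_in_square_radical}, so it applies over any such $\mathcal{S}$ in place of $\tilde{\mathcal{S}}_\Lambda$.) If every $\sigma \in \rho$ is strong we are done. Otherwise some $\tau \in \rho$ is non-strong, i.e.\ a single nonzero pure tensor $\tau \in {_j M_i}$, so that $\langle \tau \rangle \neq (0)$. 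Replacing ${_j M_i}$ by ${_j M_i}/\langle \tau \rangle$ gives a species $\mathcal{S}'$, and the standard identification $T(\Sigma, V)/\langle W \rangle \simeq T(\Sigma, V/W)$ for a sub-bimodule $W \subseteq V$ yields $T(\mathcal{S}') \simeq T(\mathcal{S})/\langle \tau \rangle$, hence $\Lambda \simeq T(\mathcal{S}')/\langle \bar{\rho} \rangle$ where $\bar{\rho}$ is the image of $\rho \setminus \{\tau\}$. A direct check — if the image of a $\sigma \in \rho \setminus \{\tau\}$ were zero, or lay in the ideal generated by the remaining images, then $\sigma$ would lie in the ideal generated by $\rho \setminus \{\sigma\}$, contradicting minimality of $\rho$ — shows $\bar{\rho}$ is again a finite minimal set of relations on $\mathcal{S}'$, while $\mathcal{S}'$ is again finite without oriented cycles and $\sum_{i,j} \dim_k {_j M'_i} < \sum_{i,j} \dim_k {_j M_i}$. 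Since this total is a non-negative integer, the iteration terminates; the species $\mathcal{S}_m$ reached carries a finite minimal set of relations $\rho$ which is canonical by Proposition \ref{prop:relation_is_arrow} and has no non-strong member, i.e.\ $\rho$ is a strong canonical set of relations and $\langle \rho \rangle$ a strong canonical ideal. This settles existence.

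For uniqueness, suppose $\Lambda \simeq T(\mathcal{S})/\langle \rho \rangle \simeq T(\mathcal{S}')/\langle \rho' \rangle$ with both underlying quivers finite without oriented cycles and both ideals strong canonical. The division rings are pinned down, since $\Lambda/\mathfrak{r}$ is semisimple and its simple components $D_i$ are unique up to permutation and isomorphism; this fixes the vertices. For an arrow $i \to j$, the degree-one part of a strong canonical ideal $\langle \rho \rangle$ is exactly $\bigoplus_{i,j} \sum_t \langle g^t_1 \rangle$, so ${_j M_i}$ fits into a short exact sequence of $D_j$-$D_i$-bimodules $0 \to R_{ji} \to {_j M_i} \to D_j(\mathfrak{r}/\mathfrak{r}^2)D_i \to 0$ with $R_{ji} = \sum_t \langle g^t_1 \rangle$ and with $D_j(\mathfrak{r}/\mathfrak{r}^2)D_i$ the bimodule of the ordinary species $\mathcal{S}_\Lambda$, an invariant of $\Lambda$. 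One then wants to show $R_{ji}$ is forced as well: comparing the decomposition of $\rad(T(\mathcal{S})D_i)$ obtained in the proof of Lemma \ref{lemma:species_with_canonical_rel_is_hereditary} with the projective module $\rad P_i = \mathfrak{r}D_i$ — projective because $\Lambda$ is hereditary, hence with well-defined $P_l$-multiplicities by Krull--Schmidt — identifies $\dim_{D_j} R_{ji}$ with an invariant of $\Lambda$, and with that in hand both $R_{ji}$ and the quotient $D_j(\mathfrak{r}/\mathfrak{r}^2)D_i$ are determined, hence so is ${_j M_i}$ up to isomorphism; therefore $\mathcal{S} \simeq \mathcal{S}'$.

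I expect the uniqueness half to be the main obstacle, and within it the assertion that the relational defect $R_{ji}$ — equivalently, the number and the $D_j$-dimensions of the strong canonical relations running from $i$ to $j$ — is genuinely an invariant of $\Lambda$ rather than an artifact of the chosen presentation; pinning this down is exactly what fixes the isomorphism type of each bimodule ${_j M_i}$, and it will require a careful analysis of how $\langle \rho \rangle$ interacts with the radical filtration of $\Lambda$. A minor point to watch throughout is that condition (ii) in the definition of a canonical set of relations is really used in the form that the $\langle g^t_1 \rangle$ sit inside ${_j M_i}$ as an internal direct sum, since that is what lets sums of sub-bimodules be transported through a quotient $T(\Sigma, V) \to T(\Sigma, V/W)$ without loss of dimension.
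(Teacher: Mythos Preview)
Your existence argument is essentially the paper's: both start from the presentation $\Lambda \simeq T(\tilde{\mathcal{S}}_\Lambda)/\langle \rho \rangle$ with $\rho$ a finite minimal (hence canonical) set, and iteratively absorb each non-strong relation $\tau \in {_j \tilde M_i}$ by replacing that bimodule with ${_j \tilde M_i}/\langle \tau \rangle$. You are a bit more careful than the paper about the iteration step --- verifying that minimality persists so that Proposition~\ref{prop:relation_is_arrow} may be re-applied to the new species --- whereas the paper simply asserts that after repeating the absorption one is left with exactly the strong canonical relations from the original $\rho'$.

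For uniqueness, note first that the paper does not actually argue it: the proof merely states that the process ``end[s] up with a unique (up to isomorphism) species $\mathcal{S}_m$'' without justification. Your attempt goes further, but the gaps lie exactly where you flag them. The Krull--Schmidt decomposition of $\rad P_i$ from the proof of Lemma~\ref{lemma:species_with_canonical_rel_is_hereditary} records the $P_j$-multiplicity as $\dim_{D_j}({_j M_i}/R_{ji})$, which equals $\dim_{D_j} D_j(\mathfrak{r}/\mathfrak{r}^2)D_i$ --- the invariant you already have in hand --- and therefore does \emph{not} isolate $\dim_{D_j} R_{ji}$; to get $R_{ji}$ from this you would need $\dim_{D_j} {_j M_i}$ independently, which is part of the presentation, not an invariant a priori. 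Moreover, even if both $R_{ji}$ and the quotient $D_j(\mathfrak{r}/\mathfrak{r}^2)D_i$ were pinned down, your conclusion ``hence so is ${_j M_i}$ up to isomorphism'' does not follow: an extension of $D_j$--$D_i$-bimodules is not determined by its ends, and over a non-perfect field $D_j \otimes_k D_i^{\op}$ need not be semisimple --- indeed, this failure of semisimplicity is precisely the phenomenon driving the paper. So uniqueness remains unproven in your sketch, just as it is in the paper's own proof.
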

\begin{proof}
From Proposition \ref{prop:p1_ring_is_tensor_ring_of_species} we
know that $\Lambda \simeq T(\tilde{\mathcal{S}}_\Lambda)/\langle
\rho' \rangle$ where we can choose the set $\rho'$ to be finite, and
from Lemma \ref{prop:relation_is_arrow} we know that $\rho'$ is a
canonical set of relations.

Let $\sigma = g_1 + \cdots + g_n \in \rho$ be a relation where $g_1
\in {_j \tilde{M}_i}$.  If the largest canonical quiver containing
$i \to j$ is the trivial canonical quiver $i \to j$ itself, then
${_j \tilde{\mathcal{M}}_i} = {_j \tilde{M}_i}$ and ${_j f_i} \colon
{_j \tilde{\mathcal{M}}_i} \otimes_{D_i} V_i \to V_j$ is just the
morphism ${_j \phi_i} \colon {_j \tilde{M}_i} \otimes_{D_i} V_i \to
V_j$. This implies that $\sigma = g_1 \in {_j \tilde{M}_i}$, so
$\langle \sigma \rangle$ generated as a $D_j - D_i$-bimodule is a
submodule of ${_j \tilde{M}_i}$. If we let ${_j \tilde{N}_i} = {_j
\tilde{M}_i} / \langle \sigma \rangle$, we can define a species
$\mathcal{S}_\sigma$ by substituting the bimodule ${_j \tilde{M}_i}$
in $\tilde{\mathcal{S}}_\Lambda$ with ${_j \tilde{N}_i}$. From the
construction of the tensor algebra we see that
$T(\tilde{\mathcal{S}}_\Lambda)/\langle \rho' \rangle \simeq
T(\mathcal{S}_\sigma)/\langle \rho' \setminus \sigma \rangle$.

If we use this process repeatedly, we can remove all relations in
$\rho'$ that are not strong canonical relations, and end up with a
unique (up to isomorphism) species $\mathcal{S}_m$ together with a set $\rho$ consisting
of the strong canonical relations in $\rho'$ such that $\Lambda
\simeq T(\mathcal{S}_m)/\langle \rho \rangle$.
\end{proof}

\begin{theorem}\label{thm:fin_dim_alg_eq}
Let $\Lambda$ be a finite dimensional basic split $k$-algebra. Then
the following are equivalent
\begin{itemize}
\item[(i)] $\Lambda$ is hereditary.
\item[(ii)] $\Lambda \simeq T(\mathcal{S}) / \langle \rho \rangle$ where
$\mathcal{S}$ is a species for which the underlying quiver
$Q_\mathcal{S}$ is finite and without oriented cycles and $\langle
\rho \rangle$ is a canonical ideal.
\item[(iii)] $\Lambda \simeq T(\mathcal{S}_m) / \langle \rho' \rangle$ where
$\mathcal{S}_m$ is a species unique up to isomorphism for which the underlying quiver
$Q_{\mathcal{S}_m}$ is a subquiver of $Q_\mathcal{S}$ and $\langle \rho'
\rangle$ is a strong canonical ideal.
\end{itemize}
\end{theorem}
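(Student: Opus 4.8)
The plan is to prove the cycle $(i)\Rightarrow(iii)\Rightarrow(ii)\Rightarrow(i)$, assembling the results established earlier in this section; essentially no new argument is needed beyond careful bookkeeping about quivers.

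For $(i)\Rightarrow(iii)$ I would invoke Proposition \ref{prop:p1_ring_is_tensor_ring_of_species} to write $\Lambda\simeq T(\tilde{\mathcal{S}}_\Lambda)/\langle\rho_0\rangle$ with $\tilde J^{\rl(\Lambda)}\subset\langle\rho_0\rangle\subset\tilde J$; by the discussion preceding Proposition \ref{prop:relation_is_arrow} the set $\rho_0$ may be taken finite, so we may shrink it to a finite minimal generating set. Proposition \ref{prop:relation_is_arrow} then identifies that minimal set as a canonical set of relations, and Proposition \ref{prop:only_specifying_relations} produces a species $\mathcal{S}_m$ (unique up to isomorphism), obtained from $\tilde{\mathcal{S}}_\Lambda$ by replacing each bimodule ${}_j\tilde M_i$ by a quotient and possibly deleting it, together with a strong canonical set of relations $\rho'$ satisfying $\Lambda\simeq T(\mathcal{S}_m)/\langle\rho'\rangle$. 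Since this construction never introduces new arrows or vertices, $Q_{\mathcal{S}_m}$ is a subquiver of $Q_{\tilde{\mathcal{S}}_\Lambda}$, which is finite and without oriented cycles as recorded in the text before this section's lemmas; this yields $(iii)$.

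For $(iii)\Rightarrow(ii)$ I would simply take $\mathcal{S}=\mathcal{S}_m$ and $\rho=\rho'$: every strong canonical relation is by definition a canonical relation, and condition (ii) in the definition of a canonical set of relations — the vanishing of $\langle g^t_1\rangle\cap\langle g^{t'}_1\rangle$ for $t\ne t'$ — is imposed verbatim in the strong case as well, so a strong canonical ideal is in particular a canonical ideal; and $Q_{\mathcal{S}_m}$ is trivially a subquiver of itself, finite and without oriented cycles. For $(ii)\Rightarrow(i)$ I would quote Lemma \ref{lemma:species_with_canonical_rel_is_hereditary} directly, which asserts that $T(\mathcal{S})/\langle\rho\rangle$ is hereditary for any species with finite acyclic underlying quiver and any canonical set of relations.

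The one point I would spell out most carefully — the main, if mild, obstacle — is the precise sense in which $Q_{\mathcal{S}_m}$ sits inside the quiver of an arbitrary species $\mathcal{S}$ appearing in $(ii)$: one must observe that the reduction of Proposition \ref{prop:only_specifying_relations} only ever passes to quotients of the bimodules (so arrows may disappear but never appear), and that running this same reduction starting from an a priori unrelated presentation $\Lambda\simeq T(\mathcal{S})/\langle\rho\rangle$ with $\langle\rho\rangle$ canonical yields a strong canonical presentation with $Q_{\mathcal{S}_m}\subseteq Q_{\mathcal{S}}$. Granting that observation, the theorem is a formal consequence of Propositions \ref{prop:p1_ring_is_tensor_ring_of_species}, \ref{prop:relation_is_arrow}, \ref{prop:only_specifying_relations} and Lemma \ref{lemma:species_with_canonical_rel_is_hereditary}.
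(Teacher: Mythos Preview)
Your proposal is correct and follows essentially the same approach as the paper: the paper's proof simply cites Lemma \ref{lemma:species_with_canonical_rel_is_hereditary}, Proposition \ref{prop:relation_is_arrow}, and Proposition \ref{prop:only_specifying_relations}, together with the remark that $Q_{\mathcal{S}_m}$ can be a strict subquiver when a bimodule is killed entirely. Your cycle $(i)\Rightarrow(iii)\Rightarrow(ii)\Rightarrow(i)$ organizes these citations in exactly the way the paper intends, and your extra care about the subquiver relation between $Q_{\mathcal{S}_m}$ and an arbitrary $Q_{\mathcal{S}}$ from $(ii)$ is a welcome clarification of a point the paper leaves largely implicit.
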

\begin{proof}
This follows from Lemma
\ref{lemma:species_with_canonical_rel_is_hereditary}, Proposition
\ref{prop:relation_is_arrow}, and Proposition
\ref{prop:only_specifying_relations}. Note that $Q_{\mathcal{S}_m}$ might be a strict subquiver if a bimodule ${_j \tilde{M}_i}$ is replaced by a bimodule ${_j \tilde{N}_i}=(0)$.
\end{proof}

The next theorem sums up the results in this section similarly to
what Theorem \ref{theorem:all_for_P1_P2} did for Section
\ref{section:perfect_field}.

\begin{theorem}
\sloppy Let $\Lambda$ be a finite dimensional basic split
$k$-algebra, let $\tilde{\mathcal{S}}_\Lambda = (D_i, {_j
\tilde{M}_i})_{i,j \in I}$ be the enlarged species of $\Lambda$, let
$Q_{\tilde{\mathcal{S}}_\Lambda}$ be the underlying quiver of
$\tilde{\mathcal{S}}_\Lambda$, and let $\tilde{J}$ be the ideal
$\oplus_{i \geq 1} \tilde{M}^{(i)}$ in $\tilde{T} (\Lambda /
\mathfrak{r}, \mathfrak{r} / \mathfrak{r}^2)$ where $\tilde{M} =
\oplus_{i,j \in I} {_j \tilde{M}_i}$. Then the following hold:
\begin{itemize}
\sloppy \item[(a)] Let $\{ r_1, r_2, \dots, r_m \}$ be elements in
$\mathfrak{r}$ such that their images $\{ \bar{r}_1, \bar{r}_2,
\dots, \bar{r}_m \}$ in $\mathfrak{r} / \mathfrak{r}^2$ generate
$\mathfrak{r} / \mathfrak{r}^2$ as a $\Lambda / \mathfrak{r} -
\Lambda / \mathfrak{r}$-bimodule.  Then $\{ D_1, D_2,  \dots, D_n,
r_1, r_2, \dots, r_m \}$ generate $\Lambda$ as a $k$-algebra, where
$\Lambda / \mathfrak{r} \simeq \oplus_{i=1}^n D_i$.
\item[(b)] There is an onto $k$-algebra homomorphism $\tilde{f} \colon
\tilde{T} (\Lambda / \mathfrak{r}, \mathfrak{r} / \mathfrak{r}^2)
\to \Lambda$ such that $\oplus_{j \geq \rl (\Lambda)}
\tilde{M}^{(j)} \subset \ker \tilde{f} \subset \oplus_{j \geq 1}
\tilde{M}^{(j)}$.
\item[(c)] $\Lambda \simeq T(\tilde{\mathcal{S}}_\Lambda)/\langle
\rho \rangle$ where $\rho$ is a set of relations on
$\tilde{\mathcal{S}}$ with $\tilde{J}^{\rl (\Lambda)} \subset
\langle \rho \rangle \subset \tilde{J}$.
\item[(d)] The category $\mod \Lambda$ is equivalent to
the category $\rep (\tilde{\mathcal{S}}_\Lambda, \rho)$, where
$\rho$ is a set of relations on $\tilde{\mathcal{S}}_\Lambda$ with
$\tilde{J}^{\rl (\Lambda)} \subset \langle \rho \rangle \subset
\tilde{J}$.
\item[(e)] $\Lambda$ is hereditary if and only if $Q_{\tilde{\mathcal{S}}_\Lambda}$ is finite without oriented
cycles and $\Lambda$ is isomorphic to
$T(\tilde{\mathcal{S}}_\Lambda) / \langle \rho \rangle$ for a
canonical ideal $\rho$.
\item[(f)]
$\Lambda$ is hereditary if and only if $\Lambda$ is isomorphic to
$T(\mathcal{S}_m) / \langle \rho \rangle$ for a species
$\mathcal{S}_m$ and a strong canonical ideal $\rho$ where the
underlying quiver $Q_{\mathcal{S}_m}$ of $\mathcal{S}_m$ is finite
and without oriented cycles.
\end{itemize}
\end{theorem}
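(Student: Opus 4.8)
The plan is to assemble this theorem entirely from the results of Sections \ref{section:perfect_field} and the present section, verifying that each clause is a direct consequence of one earlier statement. Parts (a) and (b) are nothing more than the two items of Proposition \ref{prop:tensor_algebra_surjective}, with $\tilde{M}^{(j)}$ in place of $\tilde{M}^j$, so no new work is required. Part (c) is exactly Proposition \ref{prop:p1_ring_is_tensor_ring_of_species}.

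For (d), I would combine (c) with Corollary \ref{cor:species_with_rel_is_tensor_with_ideal}: applying that corollary to the finite $k$-species $\tilde{\mathcal{S}}_\Lambda$ and to the same set of relations $\rho$ produced in (c) gives $\mod \Lambda \simeq \mod\bigl(T(\tilde{\mathcal{S}}_\Lambda)/\langle \rho \rangle\bigr) \simeq \rep(\tilde{\mathcal{S}}_\Lambda, \rho)$, and the containment $\tilde{J}^{\rl(\Lambda)} \subset \langle \rho \rangle \subset \tilde{J}$ is carried along verbatim from (c). The only point needing a moment's attention here is that the $\rho$ fed into the corollary must be literally the $\rho$ furnished by Proposition \ref{prop:p1_ring_is_tensor_ring_of_species}, which it is.

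For (e) I would prove both implications. If $\Lambda$ is hereditary, then by the discussion after Proposition \ref{prop:p1p2_ring_is_tensor_ring_of_species} we may take $\rho$ finite and then minimal, and Proposition \ref{prop:relation_is_arrow} shows such a minimal $\rho$ is a canonical set of relations, so $\langle \rho \rangle$ is a canonical ideal; finiteness of $Q_{\tilde{\mathcal{S}}_\Lambda}$ is immediate from $\dim_k \Lambda < \infty$, and the absence of oriented cycles is the claim established in the discussion opening this section (an oriented cycle would, through Lemma \ref{lemma:map_between_proj_is_mono}, yield a proper monomorphism from an indecomposable projective into itself, impossible for a finite-dimensional algebra). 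Conversely, if $Q_{\tilde{\mathcal{S}}_\Lambda}$ is finite without oriented cycles and $\langle \rho \rangle$ is a canonical ideal, then $\Lambda \simeq T(\tilde{\mathcal{S}}_\Lambda)/\langle \rho \rangle$ is hereditary by Lemma \ref{lemma:species_with_canonical_rel_is_hereditary}.

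Finally, (f) is the equivalence of (i) and (iii) in Theorem \ref{thm:fin_dim_alg_eq}, noting that the species $\mathcal{S}_m$ occurring there has underlying quiver a subquiver of $Q_{\tilde{\mathcal{S}}_\Lambda}$ and hence is automatically finite and without oriented cycles once $\Lambda$ is hereditary; for the reverse implication one uses that a strong canonical ideal is in particular a canonical ideal, so Lemma \ref{lemma:species_with_canonical_rel_is_hereditary} again supplies heredity. Since every ingredient has already been proved, I do not anticipate a genuine obstacle; the only spots requiring care are the matching-up of the relation sets in (d) and the ``no oriented cycles'' assertion in (e), which is the one statement not packaged verbatim in a numbered earlier result.
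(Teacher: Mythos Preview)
Your proposal is correct and matches the paper's approach: the theorem is presented in the paper purely as a summary of the preceding results, with no separate proof given, and your assembly from Proposition \ref{prop:tensor_algebra_surjective}, Proposition \ref{prop:p1_ring_is_tensor_ring_of_species}, Corollary \ref{cor:species_with_rel_is_tensor_with_ideal}, Lemma \ref{lemma:species_with_canonical_rel_is_hereditary}, Proposition \ref{prop:relation_is_arrow}, and Theorem \ref{thm:fin_dim_alg_eq} is exactly what is intended. The one small inaccuracy is your pointer for the ``no oriented cycles'' claim in (e): it is not established in the discussion opening this section but rather stated (without proof) in the paragraph before Proposition \ref{prop:relation_is_arrow}; your parenthetical argument via Lemma \ref{lemma:map_between_proj_is_mono} is the correct justification and mirrors the Section \ref{section:perfect_field} argument.
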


Let $\Lambda$ be a finite dimensional hereditary basic split
$k$-algebra, and let $\Lambda \simeq T(\mathcal{S}_m)/\langle \rho
\rangle$ where $\mathcal{S}_m = (D_i, {_j N_i})_{i,j \in I}$ and
$\rho$ are as described in Proposition
\ref{prop:only_specifying_relations}, hence $\rho$ consists of
strong canonical relations.  We want to investigate the species
$\mathcal{S}_m$. As before, let $\Lambda / \mathfrak{r} \simeq
\oplus D_i = D$ and $\mathfrak{r} / \mathfrak{r}^2 \simeq
\oplus_{i,j \in I} ({_j M_i}) = M$.  Let $i \to j$ be a trivial
canonical quiver.  We obviously have an isomorphism $D_j \Lambda D_i
\simeq D_j (T(\mathcal{S}_m)/\langle \rho \rangle) D_i$. What is
interesting is that $D_j \langle \rho \rangle D_i = (0)$ since $i
\to j$ was a trivial canonical quiver and $\rho$ only consists of
strong canonical relations. Therefore $D_j \Lambda D_i \simeq D_j
(T(\mathcal{S}_m)/\langle \rho \rangle) D_i \simeq D_j
T(\mathcal{S}_m) D_i$. 

If the underlying graph of $\mathcal{S}_m$ is a tree, i.e.\ it does not contain any cycles, then all canonical quivers in $\mathcal{S}_m$ must be trivial. This yields the following corollary.

\begin{corollary}
If the underlying graph of
$\mathcal{S}_m$ is a tree, then $\Lambda \simeq
T(\mathcal{S}_m) \simeq T(\mathcal{S}_\Lambda)$.
\end{corollary}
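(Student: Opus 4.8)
The plan is first to show that the hypothesis forces the ideal $\langle\rho\rangle$ to vanish, so that $\Lambda\simeq T(\mathcal{S}_m)$, and then to identify $\mathcal{S}_m$ with the species of $\Lambda$, which yields the second isomorphism.

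For the first part I would invoke the description of $\mathcal{S}_m$ and $\rho$ given in Proposition~\ref{prop:only_specifying_relations}: we have $\Lambda\simeq T(\mathcal{S}_m)/\langle\rho\rangle$ with $Q_{\mathcal{S}_m}$ finite, without oriented cycles, and (being a subquiver of $Q_{\tilde{\mathcal{S}}_\Lambda}$) without double arrows, while $\rho$ is a \emph{strong} canonical set of relations. Suppose $\rho$ contained an element ${_j \sigma_i}$ starting at $i$ and ending at $j$. Being canonical, ${_j \sigma_i}$ has exactly one summand $g_1\in{_j N_i}$, so there is an arrow $i\to j$ in $Q_{\mathcal{S}_m}$; being \emph{strong}, it has a further nonzero summand $g$ not lying in ${_j N_i}$, hence a pure tensor in the component of ${_j\mathcal{M}_i}$ indexed by a path $p\colon i\to\cdots\to j$ of length at least $2$. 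Nonvanishing of $g$ forces every bimodule along $p$ to be nonzero, so $p$ is an honest path in $Q_{\mathcal{S}_m}$; since $Q_{\mathcal{S}_m}$ has no oriented cycles, $p$ meets $\{i,j\}$ only at its endpoints, so the arrow $i\to j$ together with $p$ is a nontrivial canonical quiver, in particular a cycle in the underlying graph of $\mathcal{S}_m$, contradicting the hypothesis. Hence $\rho=\emptyset$, $\langle\rho\rangle=(0)$, and $\Lambda\simeq T(\mathcal{S}_m)$.

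For the second part I would compute the species of the algebra $T(\mathcal{S}_m)$ directly. With $N=\bigoplus_{i,j\in I}{_j N_i}$ and $J=\bigoplus_{l\geq 1}N^{(l)}$ we have $T(\mathcal{S}_m)/J\simeq\bigoplus_{i\in I}D_i$, which is semisimple, while $J$ is nilpotent because in a finite quiver without oriented cycles the paths have bounded length; hence $\mathfrak{r}(T(\mathcal{S}_m))=J$ and $\mathfrak{r}/\mathfrak{r}^2\simeq N$ as a $\bigoplus_{i\in I}D_i$-bimodule, with $(i,j)$-component ${_j N_i}$. Thus the species $\mathcal{S}_\Lambda$ of $\Lambda\simeq T(\mathcal{S}_m)$ is isomorphic to $\mathcal{S}_m$, whence $T(\mathcal{S}_\Lambda)\simeq T(\mathcal{S}_m)\simeq\Lambda$ and the corollary follows.

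I expect the first part to be the main obstacle: it is where the tree hypothesis enters, via the passage from a strong canonical relation to a genuine path of length at least $2$ parallel to its specifying arrow, hence to a cycle. This is essentially the remark preceding the corollary (``all canonical quivers in $\mathcal{S}_m$ must be trivial''), so an alternative phrasing is that a strong canonical relation requires a nontrivial canonical quiver at its specifying arrow, which a tree cannot contain. Once $\langle\rho\rangle=(0)$, the identification $\mathcal{S}_\Lambda\simeq\mathcal{S}_m$ is routine, relying only on the standard computation of the radical of the tensor algebra of a finite acyclic species, as already used in Lemma~\ref{lemma:species_without_relations_hereditary}.
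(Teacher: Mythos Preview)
Your argument is correct and matches the paper's primary line: the tree hypothesis forces every canonical quiver in $Q_{\mathcal{S}_m}$ to be trivial, so no strong canonical relation can exist, whence $\langle\rho\rangle=(0)$ and $\Lambda\simeq T(\mathcal{S}_m)$; your explicit identification of the species of $T(\mathcal{S}_m)$ with $\mathcal{S}_m$ then gives $T(\mathcal{S}_m)\simeq T(\mathcal{S}_\Lambda)$, which the paper leaves implicit. The paper also records an alternative route (after the corollary) that bypasses $\mathcal{S}_m$ entirely: when $Q_{\mathcal{S}_\Lambda}$ is a tree, for every pair $i,j$ one of $D_j\mathfrak{r}^2 D_i$ or $D_j(\mathfrak{r}/\mathfrak{r}^2)D_i$ vanishes, so the sequence $0\to\mathfrak{r}^2\to\mathfrak{r}\to\mathfrak{r}/\mathfrak{r}^2\to 0$ splits as bimodules, $\Lambda$ is $\mathfrak{r}$-split, and $\Lambda\simeq T(\mathcal{S}_\Lambda)$ follows directly from Theorem~\ref{thm:fin_dim_alg_r-split_eq}.
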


This corollary can also be deduced from the following observation \cite{DR3}:
Let $\Lambda$ be a finite dimensional hereditary basic split
$k$-algebra, and let $Q_{\mathcal{S}_\Lambda}$ be the underlying
quiver of the species associated to $\Lambda$.  Assume
$Q_{\mathcal{S}_\Lambda}$ is a tree. Then for any pair $i,j \in I$,
either $D_j \mathfrak{r}^2 D_i$ or $D_j \mathfrak{r} /
\mathfrak{r}^2 D_i$ must be zero.  Then obviously
$$0 \to D_j \mathfrak{r}^2 D_i \to D_j \mathfrak{r} D_i \to D_j
\mathfrak{r} / \mathfrak{r}^2 D_i \to 0$$ splits as a sequence of
$D_j-D_i$-modules via any $k$-algebra homomorphism $\epsilon \colon
\Lambda / \mathfrak{r} \to \Lambda$ such that $\pi \epsilon \simeq
\id_{\Lambda / \mathfrak{r}}$ for the natural projection $\pi \colon
\Lambda \to \Lambda / \mathfrak{r}$.  This implies that $$0 \to
\mathfrak{r}^2 \to \mathfrak{r} \to \mathfrak{r} / \mathfrak{r}^2
\to 0$$ splits as a sequence of $\Lambda / \mathfrak{r} - \Lambda /
\mathfrak{r}$-bimodule via $\epsilon$. Hence $\Lambda$ is
$\mathfrak{r}$-split, so $\Lambda \simeq T(\mathcal{S}_\Lambda)$.

\begin{example}

We will end this article with an example of a hereditary species
containing canonical relations. This example is motivated by
\cite{DR3}.  Let $k = \mathbb{F}_2 (t^2) = \{ \frac{f}{g} \mid f,g
\in \mathbb{F}_2[t^2], g\not= 0 \}$ be the field of rational
functions with indeterminate $t^2$ over the ground field
$\mathbb{F}_2$, where $\mathbb{F}_2$ is the Galois field consisting
of two elements. The field $k$ is not perfect. Let $K = \mathbb{F}_2
(t)$. We define a morphism $\delta \colon K \to K$ by using the
usual derivation with respect to $t$.  Now $\delta (f) = 0$ for $f
\in k \subset K$ due to the fact that $\Char \mathbb{F}_2 = 2$. Let
$M$ be the set $M = \{ (f,g) \mid f,g \in K \}$ where we define a
$K-K$-bimodule structure on $M$ by letting $a(f,g) = (af, ag)$ and
$(f,g)b = (fb, gb + f \delta(b))$ for $(f,g) \in M$ and $a,b \in K$.
The species $\mathcal{S}$ is given by the following diagram
$$\xymatrix{ & K \ar[dr]^K & \\ K \ar[ur]^K \ar[rr]_M & & K}$$
where the underlying quiver $Q_\mathcal{S}$ of $\mathcal{S}$ is
$$\xymatrix{ & 1 \ar[dr] & \\ 2 \ar[ur] \ar[rr] & & 0}$$
Let $\sigma$ be the relation $((1,0),0) - (0,1 \otimes_K 1) \in {_0
\mathcal{M}_2} = M \oplus (K \otimes_K K)$, and let $\rho$ be the
set of relations consisting only of $\sigma$. Then $(\mathcal{S},
\rho)$ is a species with relations, and $\rho$ is a set of strong
canonical relations. The tensor ring $T(\mathcal{S}) / \langle \rho
\rangle$ is isomorphic to the matrix ring
$$\Lambda = \begin{pmatrix}
K & 0 & 0 \\
K & K & 0 \\
M & K & K
\end{pmatrix}$$
where multiplication is normal matrix multiplication except for the
following
$$\begin{pmatrix}
0 & 0 & 0 \\
0 & 0 & 0 \\
0 & a & 0
\end{pmatrix} \begin{pmatrix}
0 & 0 & 0 \\
b & 0 & 0 \\
0 & 0 & 0
\end{pmatrix} = \begin{pmatrix}
0 & 0 & 0 \\
0 & 0 & 0 \\
(ab,0) & 0 & 0
\end{pmatrix}$$
From \cite[Corollary 2]{DR3} we know that this is a hereditary
finite dimensional $k$-algebra. We have $\Lambda / \mathfrak{r}
\simeq
\begin{pmatrix}
K & 0 & 0 \\
0 & K & 0 \\
0 & 0 & K
\end{pmatrix}$, and since $$\epsilon \colon \begin{pmatrix}
K & 0 & 0 \\
0 & K & 0 \\
0 & 0 & K
\end{pmatrix} \hookrightarrow \begin{pmatrix}
K & 0 & 0 \\
K & K & 0 \\
M & K & K
\end{pmatrix}$$ is a $k$-algebra homomorphism such that $\pi \epsilon \simeq
\id_{\Lambda / \mathfrak{r}}$ for the natural projection $\pi \colon
\Lambda \to \Lambda / \mathfrak{r}$, we see that $\Lambda$ is split.
Look at the sequence
$$0 \to D_0 \mathfrak{r}^2 D_2 \to D_0 \mathfrak{r} D_2 \to D_0
\mathfrak{r} / \mathfrak{r}^2 D_2$$ of $\Lambda / \mathfrak{r} -
\Lambda / \mathfrak{r}$-bimodules via $\epsilon$.  This is the
sequence $$0 \to K \to M \to K \to 0$$ where the first morphism is
given by $a \mapsto (a,0)$ for $a \in K$, and the last morphism is
given by $(a,b) \mapsto b$ for $(a,b) \in M$. Since $M \not\simeq
K^2$ this sequence does not split, hence $\Lambda$ is not a
$\mathfrak{r}$-split algebra. Hence $\Lambda$ is an
example of an algebra that satisfies the assumptions in Theorem
\ref{thm:fin_dim_alg_eq}, but not the
assumptions in Theorem
\ref{thm:fin_dim_alg_r-split_eq}. \exend
\end{example}


\begin{thebibliography}{15}
\bibitem[AF]{AF}F.W.\ Anderson, K.R.\ Fuller, \emph{Rings and categories of modules}, second edition, Graduate Texts in Mathematics 13, Springer-Verlag 1992.

\bibitem[As]{As}I.\ Assem, \emph{Iterated tilted algebras of types $\mathbb{B}_n$ and $\mathbb{C}_n$},  Journal of Algebra  84  (1983),  no. 2, 361-390.

\bibitem[ARS]{ARS}M.\ Auslander, I.\ Reiten, S.O.\ Smal{\o}, \emph{Representation Theory of Artin Algebras}, Cambridge studies in advanced mathematics, Cambridge University Press, 1995.

\bibitem[Ben]{Ben}D.J.\ Benson, \emph{Representations and cohomology I: Basic representation theory of finite groups and associative algebras}, Cambridge University Press, 1991.

\bibitem[DR1]{DR1}V.\ Dlab, C.M.\ Ringel, \emph{On algebras of finite representation type}, Carleton Mathematical Lecture Notes, No. 2, Department of Mathematics, Carleton Univ., Ottawa, Ont., 1973.

\bibitem[DR2]{DR3}V.\ Dlab, C.M.\ Ringel, \emph{The representations of tame hereditary algebras}, Lecture Notes in Pure Appl. Math., Vol. 37, Dekker, 1978.

\bibitem[DK]{DK}Yu.A.\ Drozd, V.V.\ Kirichenko, \emph{Finite Dimensional Algebras}, Springer 1994.

\bibitem[Ga1]{Ga2}P.\ Gabriel, \emph{Indecomposable representations II}, Symposia Mathematica, Vol. XI, Academic Press, (1973), 81-104.

\bibitem[Ga2]{Ga3}P.\ Gabriel, \emph{Auslander-Reiten sequences and representation-finite algebras}, Representation theory, I Lecture Notes in Math., 831, Springer (1980), 1-71.

\bibitem[La1]{La1}T.Y.\ Lam, \emph{A first course in noncommutative rings}, Graduate Texts in Mathematics, 131, Springer 1991.

\bibitem[La2]{La2}T.Y.\ Lam, \emph{Lectures on Modules and Rings}, Graduate Texts in Mathematics 189, Springer 1999.

\bibitem[Li]{Li}F.\ Li, \emph{Characterization Of Left Artinian Algebras Through Pseudo Path Algebras}, J. of Austr. Math. Soc. 83 (2007), p.\ 385-416

\bibitem[Pi]{Pi}F.\ Pierce, \emph{Associative algebras}, Graduate
Texts in Mathematics 88, Springer 1982.

\bibitem[Ri]{Ri5}C.M.\ Ringel, \emph{Representations of $K$-species and bimodules}, J.\ Algebra 41 (1976), No.\ 2, 269-302.

\bibitem[Ro]{Ro}J.J.\ Rotman, \emph{An Introduction to Homological Algebra}, Academic Press, 1979.


\end{thebibliography}
\end{document}